\newlength{\rnodo}
\tikzstyle{main node}=[outer sep=1,inner sep=0,circle,thick,draw,minimum size=2\rnodo,fill=black!10]
\tikzstyle{and node}=[outer sep=1,inner sep=0,circle,thick,draw,minimum size=2\rnodo,fill=black!60,text=white]
\tikzstyle{or node}=[outer sep=1,inner sep=0,circle,thick,draw,minimum size=2\rnodo]
\tikzstyle{ao node}=[outer sep=1,inner sep=0,circle,thick,draw,minimum size=2\rnodo,fill=black!50,text=white]
\tikzstyle{labels}=[inner sep=0pt,font=\scriptsize,auto,circle]
\tikzstyle{arcos}=[-latex,thick]
\tikzstyle{arcossym}=[latex-latex,thick]
\newcommand{\Loop}[4][]{\draw[arcos, #1](#2.#3+10)..controls +(#3+30:2\rnodo) and +(#3-30:2\rnodo).. node[labels]{#4}(#2.#3-10)}
\tikzstyle{vnode}=[circle,fill,minimum size=4pt,outer sep=1pt,inner sep=0pt]
\tikzstyle{inode}=[circle,minimum size=3pt,outer sep=1pt,inner sep=0pt,font=\footnotesize]
\tikzstyle{positive}=[green!70!black,thick,-to]
\tikzstyle{negative}=[red,thick,-to]
\tikzstyle{dpos}=[green!70!black,thick]
\tikzstyle{dneg}=[red,thick]
\tikzstyle{claro}=[fill=black!10]
\tikzstyle{medio}=[fill=black!30]
\tikzstyle{oscuro}=[fill=black!50]
\tikzstyle{ssigno}=[thick]
\tikzstyle{grueso}=[line width=1.5pt]
\tikzstyle{texto}=[font=\small,inner sep = 2pt]
\tikzstyle{cuadros}=[thick, rounded corners]
\newlength{\sombra}
\newlength{\ssombra}
\newlength{\unit}
\newlength{\munit}
\newlength{\punit}
\newlength{\sep}
\newcommand{\flecha}[3][]{\draw[very thick,-to] #2 to[#1] +(#3,0);}
\newlength{\amplit}
\newlength{\grosor}
\newlength{\segamp}
\newcommand{\llaveD}[3][]{
	\setlength{\amplit}{5pt}
	\setlength{\grosor}{0.75pt}
	#1
	\setlength{\segamp}{0.5\amplit}
	\addtolength{\segamp}{-\grosor}
	\def\largo{0.5\amplit,0.5*tan(5)*\amplit}
	\coordinate (c1) at #2;
	\coordinate (c7) at #3;
	\coordinate[rotate around={-85:(c1)}] (c2) at ($(c1)!veclen(\largo)!(c7)$);
	\coordinate[rotate around={85:(c7)}] (c6) at ($(c7)!veclen(\largo)!(c1)$);
	\coordinate (cc) at ($(c1)!0.5!(c7)$);
	\coordinate[rotate around={-90:(cc)}] (c4) at ($(cc)!veclen(\amplit,0)!(c7)$);
	\coordinate[rotate around={5:(c4)}] (c3) at ($(c4)!veclen(\largo)! (cc)$);
	\coordinate[rotate around={-5:(c4)}] (c5) at ($(c4) !veclen(\largo)! (cc)$);
	\coordinate[rotate around={90:(c6)}](c8) at ($(c6)!veclen(\grosor,0)!(c5)$);
	\coordinate[rotate around={-90:(c5)}](c9) at ($(c5)!veclen(\grosor,0)!(c6)$);
	\coordinate[rotate around={90:(c3)}](c10) at ($(c3)!veclen(\grosor,0)!(c2)$);
	\coordinate[rotate around={-90:(c2)}](c11) at ($(c2)!veclen(\grosor,0)!(c3)$);
	\filldraw[very thin,line join=round,rounded corners=0.5\amplit] (c1) -- (c2) -- (c3) {[sharp corners]-- (c4)}--(c5)--(c6){[sharp corners]--(c7)}--(c8){[rounded corners=\segamp]--(c9)}{[sharp corners]--(c4)}{[rounded corners=\segamp]--(c10)}--(c11){[sharp corners]--cycle};
}
\newcommand{\llaveI}[3][]{\llaveD[#1]{#3}{#2}}
\newtheorem{theorem}{Theorem}
\newtheorem{proposition}{Proposition}
\newtheorem{lemma}{Lemma}
\newtheorem{corollary}{Corollary}
\newtheorem{remark}{Remark}
\newtheorem{example}{Example}
\newtheorem{claim}{Claim}
\newenvironment{subproof}{\begin{proof}[Proof of Claim]}{\end{proof}}
\def\B{\{0,1\}}
\def\ONE{\textrm{\boldmath$1$\unboldmath}}
\def\Fix{\mathrm{Fix}}
\def\fix{\mathrm{fix}}
\def\Mis{\mathrm{Mis}}
\def\mis{\mathrm{mis}}
\def\N{\mathcal{N}}
\def\G{\mathcal{G}}
\def\C{\mathcal{C}}
\def\H{\mathcal{H}}
\def\s{\sigma}
\title{Fixed points in conjunctive networks and maximal\\ independent sets in graph contractions}
\author{
Julio Aracena\footnote{CI$^2$MA and Departamento de Ingenier\'ia Matem\'atica, Universidad de Concepci\'on, Av. Esteban Iturra s/n, Casilla 160-C, Concepci\'on, Chile. Email: \tt{jaracena@dim.uchile.cl}}
\footnote{Partially supported by FONDECYT project 1151265, BASAL project CMM, Universidad de Chile and by Centro de Investigaci\'on en Ingenier\'ia Matem\'atica (CI$^2$MA), Universidad de Concepci\'on.}
\and 
Adrien Richard\footnote{Laboratoire I3S, UMR CNRS 7271 \& Universit\'e de Nice-Sophia Antipolis, France. Email: \tt{richard@unice.fr}}~\footnote{Corresponding author.} 
\footnote{Partially supported by CNRS project PICS06718 and University of Nice-Sophia Antipolis.} 
\and 
Lilian Salinas\footnote{Department of Computer Sciences and CI$^2$MA, University of Concepci\'on, Edmundo Larenas 215, Piso~3, Concepci\'on, Chile. Email: \tt{lilisalinas@udec.cl}}
\footnote{Partially supported by FONDECYT project 1151265.}
}
\date{July 23, 2015; Revised April 18, 2018}
\begin{document}

\maketitle

\begin{abstract}
Given a graph $G$, viewed as a loop-less symmetric digraph, we study the maximum number of fixed points in a conjunctive boolean network with $G$ as interaction graph. We prove that if $G$ has no induced $C_4$, then this quantity equals both the number of maximal independent sets in $G$ and the maximum number of maximal independent sets among all the graphs obtained from $G$ by contracting some edges. We also prove that, in the general case, it is coNP-hard to decide if one of these equalities holds, even if $G$ has a unique induced $C_4$. 

\medskip
\noindent
{\bf Keywords:} Boolean network, fixed point, maximal independent set, edge contraction.
\end{abstract}

\section{Introduction}

A {\em Boolean network}  with $n$ components is a discrete dynamical system usually defined as a map 
\[
f:\B^n\to\B^n,\qquad x=(x_1,\dots,x_n)\mapsto f(x)=(f_1(x),\dots,f_n(x)).
\]
Boolean networks have many applications. In particular, they are classical models for the dynamics of gene networks \cite{K69,T73,TA90,TK01,J02}, neural networks \cite{MP43,H82,G85,GM90,GM91} and social interactions \cite{PS83, GT83}. They are also essential tools in information theory, for the binary network coding problem~\cite{R07,GR11,GRF16}. 

In many contexts, the main parameter of $f$ is its {\em interaction graph}, the digraph $G$ on $\{1,\dots,n\}$ that contains an arc from $j$ to $i$ if $f_i$ depends on $x_j$. The arcs of $G$ can also be signed by a labeling function $\sigma$, to obtain the {\em signed interaction graph $G_\sigma$}. The sign $\sigma(j,i)$ of an arc from $j$ to $i$ then indicates whether the Boolean function $f_i$ is an increasing (positive sign), decreasing (negative sign) or non-monotone (zero sign) function of $x_j$. More  formally, denoting $e_j$ the $j$th base vector, 
\[
\sigma(j,i)=
\begin{cases}
 1&\textrm{if $f_i(x)\leq f_i(x+e_j)$ for all $x\in\B^n$ with $x_j=0$},\\
-1&\textrm{if $f_i(x)\geq f_i(x+e_j)$ for all $x\in\B^n$ with $x_j=0$},\\
 0&\textrm{otherwise}.
\end{cases}
\]
 
The signed interaction graph is very commonly considered when studying gene networks, since a gene can typically either activate (positive sign) or inhibit (negative sign) another gene. Furthermore, the signed interaction graph is usually the first reliable information that biologists obtain when they study a gene network, the actual dynamics being much more difficult to determine \cite{TK01,N15}. A central problem is then to predict these dynamics according to the signed interaction graph. Among the many dynamical properties that can be studied, fixed points are of special interest since they correspond to stable states and have often specific meaning. For instance, in the context of gene networks, they correspond to stable patterns of gene expression at the basis of particular cellular processes \cite{TA90,A04}. Many works have thus been devoted to the study of fixed points. In particular, the number of fixed points has been the subject of a stream work, e.g. in \cite{R86,ADG04b,RRT08,A08,JLV10,GR11,VL12,ARS14,GRR15}. Below, we denote by $\phi(G_\sigma)$ the {\em maximum number of fixed points} in a Boolean network with $G_\sigma$ as signed interaction graph. 

\paragraph{Positive and negative cycles}
The sign of a cycle in $G_\sigma$ is {\em positive} (resp. {\em negative}) if the product of the signs of its arcs is non-negative (resp. non-positive). Positive cycles are key structures for the study of $\phi(G_\sigma)$. A fundamental result concerning this quantity, proposed by the biologist Ren\'e Thomas, is that $\phi(G_\sigma)\leq 2$ if $G_\sigma$ has only negative cycles. This was generalized in \cite{A08} into the following upper-bound, referred as the {\em positive feedback bound} in the following: for any signed digraph $G_\sigma$,
\[
\phi(G_\sigma)\leq 2^{\tau^+(G_\sigma)},
\]
where $\tau^+(G_\sigma)$ is the minimum size of a {\em positive feedback vertex set}, that is, the minimum size of a subset of vertices intersecting every positive cycle of $G_\sigma$. An immediate consequence is that $\max_{\sigma}\phi(G_\sigma)\leq 2^{\tau(G)}$ for all digraphs $G$, where $\tau(G)$ is the minimum size of a feedback vertex set of $G$. It is worst noting that this result has been proved independently in the context of network coding in information theory \cite{R07}. Actually, a central problem in this context, called the {\em binary network coding problem}, is equivalent to identify the digraphs $G$ reaching the bound, that is, such that $\max_{\sigma}\phi(G_\sigma)=2^{\tau(G)}$ \cite{R07,GR11}. 

The positive feedback bound is very perfectible. For instance, it is rather easy to find, for any $k$, a strongly connected signed digraph $G_\sigma$ with $\phi(G_\sigma)=1$ and $\tau^+(G_\sigma)\geq k$ \cite{A15}. This is not so surprising, since the positive feedback bound depends only on the structure of positive cycles, while negative cycles may have a strong influence on fixed points. Actually, such a gap, with $\phi(G_\sigma)$ bounded and $\tau(G_\sigma)$ unbounded, {\em requires} the presence of negative cycles, since one can prove the following: there exists an unbounded function $h:\mathbb{N}\to\mathbb{N}$ such that, for any strongly connected signed digraph $G_\sigma$ with only positive cycles, $\phi(G_\sigma)\geq h(\tau^+(G_\sigma))$ 
\footnote{First, if $G_\sigma$ is strongly connected and has only positive cycles, then $\tau^+(G_\sigma)=\tau(G)$, and, by \cite[Proposition 1]{MRRS13}, $\fix(G_\sigma)=\fix(G_+)$, where $G_+$ is obtained from $G_\sigma$ by making positive all the arcs. Second, according to \cite[Lemma 6]{ARS16}, we have $\nu(G)< \phi(G_+)$, where $\nu(G)$ is the maximal size of a collection of vertex-disjoint cycles in $G$. Third, by a celebrated theorem of Reed, Robertson, Seymour and Thomas \cite{RRST95}, there exists an unbounded function $h:\mathbb{N}\to\mathbb{N}$ such that $\nu(G)\geq h(\tau(G))$. We then deduce that $\phi(G_\sigma)=\phi(G_+)> \nu(G)\geq h(\tau(G))=h(\tau^+(G_\sigma))$.}.

In this situation, it is natural to focus our attention on the influence of negative cycles. A natural framework for this problem is to fix $G$ and study the variation of $\phi(G_\sigma)$ according to labeling function $\sigma$, which gives the repartition of signs on the arcs of $G$. This is, however, a difficult problem, widely open, with very few formal results (see however \cite{ARS14,GRR15}). This comes from the versatility of negative cycles: depending on the way they are connected to positive cycles, that can either be favorable or unfavorable to the presence of many fixed points. The simple example in figure \ref{fig:influence_neg} illustrates this. As another   illustration, consider $K^+_n$ (resp. $K^-_n$), the signed digraphs obtained from $K_n$, the complete loop-less symmetric digraph on $n$ vertices, by adding a positive (resp. negative) sign to each arc. It has been proved in \cite{GRR15} that, for any $n\geq 4$, 
\begin{equation}\label{eq:complete}
\phi(K^-_n)={n\choose \lfloor\frac{n}{2}\rfloor}>\frac{2^{n+1}}{n+2}\geq \phi(K^+_n).
\end{equation}
In $K^-_n$, the sign of every cycle is given by the parity of its length, and there is thus a balance between the number of positive and negative cycles. This balance allows the presence of more fixed points than in $K^+_n$, where all the cycles are positive. Actually, we may think that the balance obtained in $K^-_n$ is optimal when zero sign is forbidden, that is, any Boolean network whose signed interaction graph is obtained from $K_n$ by adding a positive or negative sign on each arc has at most $\phi(K^-_n)$ fixed points.

\begin{figure}
\centering
\begin{tabular}{ccccc}
\begin{tikzpicture}
\coordinate (c1) at (-30:1);
\coordinate (c2) at (90:1);
\coordinate (c3) at (210:1);
\foreach \x in{1,2,3}{
	\node[vnode] (\x) at (c\x) {};
}
\path[positive]
(2) edge (1)
(3) edge (2)
(1) edge[bend right =10] (3)
(3) edge[bend right =10] (1)
;
\Loop[positive]{1}{-30}{};
\Loop[positive]{2}{90}{};
\Loop[negative]{3}{210}{};
\end{tikzpicture}
&&
\begin{tikzpicture}
\foreach \x in{1,2,3}{
	\node[vnode] (\x) at (c\x) {};
}
\path[positive]
(2) edge (1)
(3) edge (2)
(1) edge[bend right =10] (3)
(3) edge[bend right =10] (1)
;
\Loop[positive]{1}{-30}{};
\Loop[positive]{2}{90}{};
\Loop[positive]{3}{210}{};
\end{tikzpicture}
&&
\begin{tikzpicture}
\foreach \x in{1,2,3}{
	\node[vnode] (\x) at (c\x) {};
}
\path[positive]
(2) edge (1)
(3) edge (2)
(1) edge[bend right =10] (3)
(3) edge[negative,bend right =10] (1)
;
\Loop[positive]{1}{-30}{};
\Loop[positive]{2}{90}{};
\Loop[positive]{3}{210}{};
\end{tikzpicture}
\\
$\phi(G_{\sigma_1})=2$ && $\phi(G_{\sigma_2})=4$ &&$\phi(G_{\sigma_3})=5$
\end{tabular}
\caption{Green arcs are positive, and red arcs are negative. This convention is used throughout the paper.}\label{fig:influence_neg}
\end{figure}
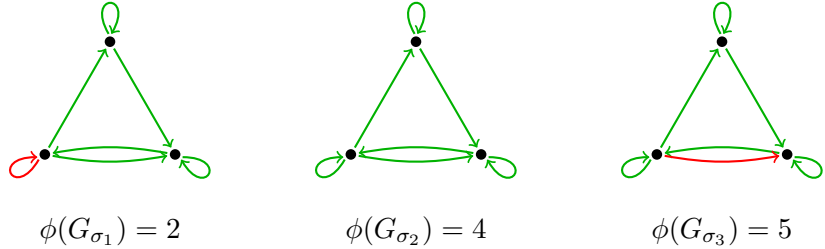

\paragraph{Results}

The variation of $\phi(G_\sigma)$ according to $\sigma$ is a difficult problem, and even the study of the repartition of signs $\sigma$ that maximizes $\phi(G_\sigma)$ is difficult. A first approach is to focus on a special class of digraphs, such as the complete digraph $K_n$ considered above. Another approach is to restrict the family of Boolean networks we consider. Here, we use both approaches simultaneously, by considering the class of {\em conjunctive networks} with {\em loop-less} and {\em symmetric} interaction graphs. 

Given a {\em full-signed} digraph $G_\sigma$, {\em i.e.} a signed digraph with no zero sign, the {\em conjunctive network} $f$ on $G_\sigma$ is defined in such a way that $f_i$ is a conjunction of positive or negative literals, where $x_j$ (resp. $\overline{x_j}$) appears as a positive (resp. negative) literal if and only if there is a positive (resp. negative) arc from $j$ to $i$. Our interest for this class of networks is twofold. Firstly, every Boolean network can be represented, up to an increase of the number of components, under the form of a conjunctive network (simply because the conjunction and the negation form a complete set of connectives). Secondly, there is a one-to-one correspondence between conjunctive networks and full-signed digraphs, which make easier the study of the relationships between structure and dynamics, because graph theoretic tools and results can be used in a more straightforward way. In particular, there are many connections with kernels and maximal independent sets \cite{ADG04b,VL12,RR13,ARS14}. 

The study of conjunctive networks in the context of gene networks has captured special interest in the last time due to increasing evidence that  the synergistic regulation of a gene by several transcription factors, corresponding to  a conjunctive function, is a common mechanism in regulatory networks \cite{nguyen2006deciphering,gummow2006reciprocal}. Besides, they have been used  in the combinatorial influence of deregulated gene sets on disease phenotype classification \cite{park2010inference} and in the construction of synthetic gene networks \cite{shis2013library}. On the other hand, the symmetry of the interaction graph of a Boolean network appears in models that describe reciprocal social relationships like acquaintance, friendship, scientific collaboration \cite{PS83, GT83}; in models of neural networks (Hopfield networks) \cite{H82,GM91} and in the modeling of intercellular signalling networks \cite{collier1996pattern,glass2016signaling}.

Since loop-less symmetric digraphs can naturally be viewed as (undirected) graph, in the following, by {\em graph} we always mean {\em loop-less symmetric digraph}. Furthermore, all the signed graphs we consider are always full-signed. We denote by $\fix(G_\sigma)$ the number of fixed points in the conjunctive network on $G_\sigma$. We denote by $G_-$ the signed digraph obtained from $G$ by adding a negative sign on every arc. We denote by $\mis(G)$ the number of maximal independent sets in $G$, and $C_k$ is the undirected cycle of length~$k$. 

A fundamental observation is that, for all graph $G$,
\begin{equation}\label{eq:fix_and_mis}
\fix(G_-)=\mis(G).
\end{equation}

Since many classes of graphs have a lot of maximal independent sets (e.g. trees), this shows that, at least for these graphs, the full-negative repartition produces many fixed points. Our main result, the following, shows that induced $C_4$ are the only obstruction to this optimality. 

\begin{theorem}\label{thm:main}
For every graph $G$, 
\[
\mis(G)\leq
\max_\sigma \fix(G_\sigma)\leq \left(\frac{3}{2}\right)^{m(G)}\mis(G)
\]
where $m(G)$ is the maximum size of a matching $M$ of $G$ such that every edge of $M$ is contained in an induced copy of $C_4$ that contains no other edge of $M$.
\end{theorem}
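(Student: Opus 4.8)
The plan is to prove the two inequalities separately; the left one is essentially free, and the entire difficulty sits in the right one. For the lower bound, the fundamental observation~\eqref{eq:fix_and_mis} already does the job: the all-negative signing is one admissible choice of $\sigma$, so $\mis(G)=\fix(G_-)\le\max_\sigma\fix(G_\sigma)$.

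For the upper bound I would fix a signing $\sigma$ and read off the fixed-point equation: writing $P_i$ and $N_i$ for the sets of positive and negative in-neighbours of $i$, a configuration $x$ is a fixed point iff, for every $i$, $x_i=1$ exactly when $x_j=1$ for all $j\in P_i$ and $x_j=0$ for all $j\in N_i$. Two forcing rules fall out at once. Along a \emph{positive edge} (both arcs positive) every fixed point satisfies $x_i=x_j$, and along a \emph{mixed edge} (one positive and one negative arc) one endpoint is forced to $0$. I would first dispose of mixed edges, arguing that they only pin variables to constants and hence cannot raise the count above the value obtained by recolouring the offending edge all-negative, so that the edge set splits into positive edges $E^+$ and negative edges $E^-$. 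Contracting $E^+$ produces a graph $H=G/E^+$ whose vertices are the positive-connected components of $G$, and a direct check of the two forcing rules shows that $\fix(G_\sigma)$ equals the number of independent sets $S$ of $H$ (with respect to the surviving negative edges) that dominate every \emph{singleton} component; a component of size at least $2$ may stay ``off'' without being dominated, which is exactly where surplus fixed points come from.

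It then remains to bound this relaxed count, call it $N(H)$, by $(3/2)^{m(G)}\mis(G)$. The configurations that are not maximal independent sets of $G$ are of two kinds: a component of size $\ge2$ switched on (its on-set contains an edge of $G$, hence is not independent) or such a component switched off and left undominated (the on-set is not maximal). The heart of the argument is to charge each surplus configuration to a genuine maximal independent set of $G$ with controlled multiplicity. The structural claim matching the stated factor is that a positive edge can increase the count only when it lies in an induced $C_4$, and that after passing to a well-chosen maximizer the ``helping'' positive edges may be taken to form a matching $M$ in which each edge is guarded by an induced $C_4$ containing no other edge of $M$. Each such gadget should contribute the factor $3/2$ already witnessed by a lone $C_4$ carrying a single positive edge, where contracting that edge turns the $C_4$ into a triangle and the two maximal independent sets $\{1,3\},\{2,4\}$ are replaced by three fixed points.

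The main obstacle is precisely this charging step, because the per-edge factor $3/2$ is not literally multiplicative: contracted components may be large, and distinct induced copies of $C_4$ may overlap and share vertices. I expect to control this by (i) reducing to the situation where the positive edges already form a matching sitting inside induced $C_4$'s, absorbing longer positive components and non-$C_4$ positive edges into the base term $\mis(G)$ by showing they do not increase $N(H)$, and then (ii) running an induction that deletes one guarded $C_4$ at a time, using at each step that contracting an edge of an induced $C_4$ multiplies the relevant number of (relaxed) maximal independent sets by at most $3/2$ while leaving the domination constraints on the rest of the graph intact. Verifying that the vertex-disjointness encoded in the definition of $m(G)$ is exactly what makes this induction close, and that no other arrangement of positive edges does better, is the delicate point.
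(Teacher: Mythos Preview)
Your outline follows the paper's strategy almost exactly: symmetrise the signing, reduce the positive part to a matching, then peel off positive edges one at a time paying a factor $3/2$ only when the edge sits in a suitable induced $C_4$. These are precisely the paper's Lemmas~\ref{lemma:symmetric}, \ref{lemma:matching} and \ref{lemma:C_4}. So the route is right, but what you have written is a plan, not a proof: all three steps carry real content that you have not supplied.

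Two concrete gaps. First, your treatment of mixed edges is not an argument. You observe correctly that a mixed edge forces one endpoint to $0$, and then assert that ``pinning variables to constants cannot raise the count above the value obtained by recolouring the edge all-negative''. But a fixed point $x$ of $G_\sigma$ with $x_v=0$ need not be a fixed point of the recoloured network: once the incoming positive arc $uv$ becomes negative, it may happen that $f'_v(x)=1$, so $x$ is no longer fixed. The paper handles this with an explicit injection (Lemma~\ref{lemma:cambiaArco}): one sets $x'_v=f'_v(x)$, keeps the other coordinates, and checks that $x\mapsto x'$ lands in $\Fix(G')$ and is injective. You need something of this kind. Second, your steps (i) and (ii) are exactly where the work lies. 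Showing that positive components larger than an edge can be shrunk to a single positive edge without losing fixed points (your ``absorbing longer positive components'') requires the decomposition machinery of Lemmas~\ref{lemma:decompo1}--\ref{lemma:decompo4}; and the per-edge $3/2$ bound is a genuine case analysis (Lemmas~\ref{lemma:edge1}--\ref{lemma:edge3}) that first localises to $N_G(a)\cup N_G(b)$ and then lifts back via a careful comparison of $\Fix(G,U)$ and $\Fix(G',U)$. You have correctly identified these as the delicate points; the paper's proof is essentially the execution of what you describe, and there is no shortcut around it.
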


The upper-bound, which is by \eqref{eq:fix_and_mis} the non-trivial part, is tight: as showed below, for every $k\geq 1$ there exists a graph $G$ such that $m(G)=k$ and $\max_\sigma \fix(G_\sigma)=(\frac{3}{2})^k\mis(G)$. Obviously, if $G$ has no induced $C_4$ then $m(G)=0$ and we obtain the following corollary. 

\begin{corollary}\label{cor}
For every graph $G$ without an induced copy of $C_4$,  
\begin{equation}\label{eq:fix=mis}
\max_\sigma\fix(G_\sigma)=\mis(G).
\end{equation}
\end{corollary}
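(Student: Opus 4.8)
Since $m(G)=0$ exactly when $G$ has no induced $C_4$, the corollary is the specialization of Theorem~\ref{thm:main} to that case and so is formally immediate; the real content is the upper bound, and I will sketch how I would prove the inequality $\fix(G_\sigma)\le\mis(G)$ directly, for \emph{every} signing $\sigma$, under the hypothesis that $G$ is $C_4$-free. The matching lower bound costs nothing: applying~\eqref{eq:fix_and_mis} to the all-negative labelling gives $\max_\sigma\fix(G_\sigma)\ge\fix(G_-)=\mis(G)$, and the two inequalities together yield~\eqref{eq:fix=mis}.

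First I would read off the local constraint that each edge puts on a fixed point $x$. For an edge $\{i,j\}$, let $a$ be the sign of the arc into $i$ and $b$ the sign of the arc into $j$; since $f_i$ and $f_j$ are conjunctions, a one-line case check gives three possibilities: both arcs positive forces $x_i=x_j$, both negative forces $x_ix_j=0$, and opposite signs force one prescribed endpoint to equal $0$. Consequently, inside $S=\{i:x_i=1\}$ every edge is fully positive, so $S$ is a union of connected components of the subgraph spanned by the fully positive edge set $F$, and distinct ``on'' components are pairwise non-adjacent in $G$. This invites the change of variable to the contraction $\hat G=G/F$, in which $x$ descends to a $0/1$-labelling of super-vertices; I would also record that a mixed edge is a negative $2$-cycle pinning an endpoint to $0$, so a short reduction lets me assume $\sigma$ has no mixed edge, i.e.\ $\sigma=\sigma_F$ is positive on $F$ and negative elsewhere.

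Under $\sigma_F$ the surviving conditions say that the set $\mathcal S$ of ``on'' super-vertices is independent in $\hat G$ and dominates every \emph{singleton} super-vertex, while a \emph{nontrivial} super-vertex may be labelled $0$ ``for free'' because an internal positive edge supplies the required ``reason for a zero'' in the conjunctive rule. Thus $\fix(G_{\sigma_F})$ counts these \emph{relaxed} maximal independent sets of $\hat G$, a number that genuinely exceeds $\mis(\hat G)$ in general: already for $G=P_4$ and $F$ the two end-edges, $\hat G=K_2$ with both super-vertices nontrivial, so all three independent sets $\emptyset,\{A\},\{B\}$ qualify and $\fix(G_{\sigma_F})=3>2=\mis(\hat G)$ — yet this equals $\mis(P_4)=3$. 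So the target inequality is not $\mis(\hat G)\le\mis(G)$ but rather: the number of relaxed maximal independent sets of $\hat G$ is at most $\mis(G)$.

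The heart of the proof, and where I expect the real difficulty, is exactly this last count: I would build an injection from the fixed points of $G_\sigma$ into the maximal independent sets of $G$, repairing each relaxed set on $\hat G$ into a genuine maximal independent set of $G$ by reinserting, at each contracted super-vertex, whichever original endpoints restore independence and domination. The point is that $C_4$-freeness is precisely what makes this repair well defined and collision-free, whereas an induced $C_4$ is the irreparable configuration: for $C_4$ itself one edge-contraction already yields $K_3$ and pushes $\mis$ from $2$ to $3$, and the signing of $C_4$ with two opposite edges positive and the other two negative has $3=\tfrac32\,\mis(C_4)$ fixed points, so the bound is attained at $m=1$. The subtlety I would watch is that a single contraction can create a $C_4$ even when $G$ has none, so the injection must be argued globally rather than one edge at a time; the same analysis yields the companion identity $\max_F\mis(G/F)=\mis(G)$ for $C_4$-free $G$, giving the three-way equality advertised. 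For the full Theorem one then quantifies the obstruction: each contracted edge sitting in an induced $C_4$ inflates the count by a factor at most $\tfrac32$, and the matching $M$ behind $m(G)$ is designed so that these inflations act on disjoint gadgets and hence multiply, producing the factor $(\tfrac32)^{m(G)}$.
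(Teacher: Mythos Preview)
Your opening sentence is the entire proof, and it matches the paper's: with no induced $C_4$ one has $m(G)=0$, so Theorem~\ref{thm:main} collapses to $\mis(G)\le\max_\sigma\fix(G_\sigma)\le\mis(G)$.

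The remainder sketches a self-contained argument for the $C_4$-free case, and this takes a different route from the paper's proof of Theorem~\ref{thm:main}. The paper symmetrizes the signing (Lemma~\ref{lemma:symmetric}), reduces the positive edges to a matching (Lemma~\ref{lemma:matching}), and then removes the remaining positive edges one at a time, each removal costing at most a factor $3/2$ precisely when the edge sits in a suitable induced $C_4$ (Lemma~\ref{lemma:C_4}). Your sketch instead contracts the whole positive edge set $F$ at once and proposes to inject the fixed points---read as ``relaxed'' maximal independent sets of $\hat G=G/F$---directly into $\Mis(G)$.

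There is a genuine gap. The injection is the crux; you yourself call it ``the heart of the proof, and where I expect the real difficulty'', and you correctly observe it cannot be built one contraction at a time since contracting an edge in a $C_4$-free graph may create induced $C_4$'s---yet no global construction is offered. The phrase ``reinserting, at each contracted super-vertex, whichever original endpoints restore independence and domination'' does not specify a map, let alone an injective one. Two smaller points: your ``short reduction'' eliminating mixed edges is the content of Lemma~\ref{lemma:symmetric} and is not immediate; and your claimed \emph{equality} between $\fix(G_{\sigma_F})$ and the relaxed-MIS count fails whenever an $F$-component carries an internal negative edge (take a triangle with two positive edges and one negative: $\hat G$ is a single nontrivial super-vertex, there are two relaxed MIS, but only the all-zero point is fixed), though the inequality you actually need survives. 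The paper avoids all of this by never attempting a direct injection into $\Mis(G)$: after reducing to a matching, each step compares two signed graphs differing in a single edge, and the comparison is handled by explicit decomposition formulas (Lemmas~\ref{lemma:decompo1}--\ref{lemma:decompo4} and Lemma~\ref{lemma:edge1}) that do not require $C_4$-freeness of any intermediate graph.
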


The main result of \cite{ARS14} is $\max_{G\in\mathcal \G_n}\max_{\sigma}\fix(G_\sigma)=\max_{G\in\G_n}\mis(G)$, where $\G_n$ is set of $n$-vertex graphs. The corollary above thus says something much more accurate for the smaller class of $C_4$-free $n$-vertex graphs. It is also akin to the inequality \eqref{eq:complete} discuss above, since from \eqref{eq:fix_and_mis} and \eqref{eq:fix=mis} we have $\fix(G_-)=\max_\sigma\fix(G_\sigma)$ for every $C_4$-free graph $G$. 

From a biological point of view, the relationship between the fixed points of a gene network and the maximal independent sets of its interaction graph has been exhibited, in particular,  in the modeling of Delta-Notch intercellular signalling network  \cite{collier1996pattern,glass2016signaling,afek2011biological}. The interaction graph of this network is a $C_4$-free loop-less symmetric digraph with only negative signs. In this way, the corollary above suggests that the fact that all the interactions are of inhibition type allows to have the greatest possible number of fixed points.

Corollary \ref{cor} contrasts with the following complexity result, which is our second main result.

\begin{theorem}\label{thm:NP-hard}
Given a graph $G$, it is coNP-hard to decide if equality \eqref{eq:fix=mis} holds,  even if $G$ has a unique induced copy of $C_4$. 
\end{theorem}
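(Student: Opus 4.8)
The plan is to prove coNP-hardness by a polynomial reduction from the (coNP-complete) problem of deciding whether a Boolean formula is unsatisfiable. The engine is the correspondence, underlying Theorem~\ref{thm:main}, between $\max_\sigma\fix(G_\sigma)$ and maximal independent sets of edge-contractions of $G$. If $G$ has a unique induced copy of $C_4$, on vertices forming the cycle $1-2-3-4-1$, then since $m(G)\le 1$ the only contractions that can be exploited are those of a single edge of this $C_4$, and one gets
\[
\max_\sigma\fix(G_\sigma)=\max\bigl(\mis(G),\,\mis(G/12),\,\mis(G/23),\,\mis(G/34),\,\mis(G/41)\bigr).
\]
The lower bound $\max_\sigma\fix(G_\sigma)\ge\mis(G/e)$ comes from the explicit sign pattern that is positive on the two arcs of $e$ and negative elsewhere, under which the two endpoints of $e$ are forced to agree and thus behave like a single contracted vertex with negative arcs to the rest, so the fixed points are exactly the maximal independent sets of $G/e$; the matching upper bound is the content of Theorem~\ref{thm:main} for $m(G)\le 1$. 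Consequently equality \eqref{eq:fix=mis} holds if and only if $\mis(G/e)\le\mis(G)$ for each of the four edges $e$ of the $C_4$.

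Next I would isolate the combinatorial core. Say that a graph $H$ with a distinguished vertex subset $X$ is \emph{good} if $H$ has a maximal independent set disjoint from $X$. Deciding goodness is NP-complete: membership in NP is clear, and SAT reduces to it by taking, for each variable, an edge $v_i\bar v_i$, and for each clause a vertex placed in $X$ and joined to its literals; a maximal independent set avoiding $X$ then forces exactly one literal per variable (domination of each pair $v_i\bar v_i$) and domination of every clause vertex, that is, a satisfying assignment. Hence the complement ``$H$ is not good'' is coNP-complete and coincides with unsatisfiability.

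From $(H,X)$ I build $G$ by attaching a fresh $C_4$ $1-2-3-4-1$ to $H$: vertices $1,2,4$ receive no neighbour outside the cycle, while vertex $3$ is additionally joined to every vertex of $X$. Contracting any edge of the $C_4$ merges two of $\{1,2,3,4\}$ and creates a triangle (for instance $G/34$ has the triangle $\{1,2,w\}$, with $w$ joined to $\{1,2\}\cup X$). Splitting each maximal independent set of the contraction according to which vertex of that triangle it meets gives, for every edge $e$ of the $C_4$,
\[
\mis(G/e)=\mis(G)+a,
\]
where $a$ is the number of maximal independent sets $J$ of $H$ with $J\cap X=\varnothing$. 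Thus $\max_\sigma\fix(G_\sigma)=\mis(G)+a$, so equality \eqref{eq:fix=mis} holds if and only if $a=0$, i.e.\ if and only if $H$ is not good, i.e.\ if and only if the formula is unsatisfiable, which is the desired reduction.

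The main obstacle is to guarantee that $G$ has a \emph{unique} induced copy of $C_4$ while keeping this equivalence intact. This forces $H$ itself to be $C_4$-free, and forces the attachment of $3$ to $X$ to create no new $4$-cycle; a direct check of the possible $4$-cycles through vertex $3$ shows the latter holds precisely when no two non-adjacent vertices of $X$ have a common neighbour in $H$. The plain SAT gadget above is not $C_4$-free (two clauses sharing two literals, or a clause together with a variable edge, yield induced $4$-cycles), so the real work is to redesign it---subdividing clause--literal edges and fanning out each literal occurrence through private degree-two vertices linked by $C_4$-free equality gadgets---so that the resulting $H$ is $C_4$-free, $X$ is independent with the no-common-neighbour property, and the value of $a$ still detects satisfiability. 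Verifying that these $C_4$-killing modifications preserve the identity $\mis(G/e)=\mis(G)+a$, together with establishing the upper bound of the contraction characterisation in the case $m(G)\le 1$, is where the bulk of the effort lies.
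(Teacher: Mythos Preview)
Your overall strategy is the paper's: reduce from \textsc{SAT} via the intermediate problem ``does $H$ have a maximal independent set disjoint from $X$?'', glue a single $C_4$ onto $H$ through one vertex joined to $X$, and argue that equality~\eqref{eq:fix=mis} fails for the resulting graph if and only if such an independent set exists. The paper carries this out with the same $\tilde G$ (its vertex $a$ is your vertex $3$), proves the counting identity $\fix(\tilde G_e)=2\mis(H)+\mis(H-X)=\mis(\tilde G)+a$ for each edge $e$ of the $C_4$, and then builds a concrete $C_4$-free SAT gadget (a $6$-cycle $L_i,x_i,y_i,\overline L_i,\overline y_i,\overline x_i$ per variable, a clique on the clauses, and $U=\mathcal C\cup\mathcal L\cup\overline{\mathcal L}$) rather than the subdivision scheme you sketch.

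There is, however, a genuine gap in your write-up. The identity
\[
\max_\sigma\fix(G_\sigma)=\max\bigl(\mis(G),\mis(G/12),\mis(G/23),\mis(G/34),\mis(G/41)\bigr)
\]
is \emph{not} true for an arbitrary graph with a unique induced $C_4$, and neither is the claim that the fixed points of the sign pattern positive only on $e$ are ``exactly'' the maximal independent sets of the plain contraction $G/e$. What Lemmas~\ref{lemma:symmetric}--\ref{lemma:C_4} and Lemma~\ref{lemma:contraction} actually give is $\fix(G_e)=\mis(G/e')$ where $G/e'$ is the contraction \emph{together with a new pendant} on the contracted vertex (this is the $\mathcal H'$ of Theorem~\ref{thm:H'}, not $\mathcal H$). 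In general $\mis(G/e')\ge\mis(G/e)$ with strict inequality whenever some maximal independent set of $G-\{a,b\}$ misses $N(a)\cup N(b)$; a $C_4$ whose four vertices each carry a pendant path of length~$2$ already exhibits this. So from Theorem~\ref{thm:main} alone you cannot conclude your displayed maximum. In your specific $\tilde G$ the gap closes for free, because three of the four $C_4$-vertices have degree~$2$: for every edge $e$ of the $C_4$ the contracted vertex lies in a triangle, hence is automatically dominated, and the pendant is redundant. You should either switch to the $\mathcal H'$-formulation throughout, or state the contraction identity only for $\tilde G$ and verify it there. A smaller imprecision: the correct no-new-$C_4$ condition is that no two non-adjacent vertices of $X$ share a common neighbour \emph{outside} $X$ (a common neighbour in $X$ yields a chord from your vertex $3$).
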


\paragraph{Graph theoretical formulation} 

Our results can be restated in purely graph theoretical terms. Given a graph $G$, let $\H(G)$ be the set of graphs obtained from $G$ by contracting some edges. For every connected subgraph $C$ of $G$, let $G/C$ be the graph obtained by contracting $C$ into a single vertex $c$ and by adding an edge between $c$ and a new vertex $c'$. Let $\H'(G)$ be the set of graphs that can be obtained from $G$ by repeating such an operation. By convention, $G$ is a member of $\H(G)$ and $\H'(G)$. We will prove the following equality.

\begin{theorem}\label{thm:H'} 
For every graph $G$, 
\[
\max_\sigma \fix(G_\sigma)=\max_{H\in \H'(G)}\mis(H).
\]
\end{theorem}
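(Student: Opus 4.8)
The plan is to prove the two inequalities separately through one structural correspondence between the full-signed versions of $G$ and the contractions in $\H'(G)$. I would start from a combinatorial reading of the fixed-point equation $x_i=\bigwedge_{j\sim i}\ell_{ji}$, where $\ell_{ji}=x_j$ when $\sigma(j,i)=1$ and $\ell_{ji}=\overline{x_j}$ when $\sigma(j,i)=-1$. Setting $S=\{i:x_i=1\}$, one checks that an edge carrying two positive arcs forces $x_i=x_j$, an edge carrying two negative arcs forbids $i,j\in S$ simultaneously, and an edge carrying one arc of each sign (a \emph{mixed} edge) forces one of its endpoints to equal $0$ in every fixed point; I call such endpoints \emph{dead}. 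Hence every fixed point is constant on each connected component of the subgraph of doubly-positive edges (the \emph{positive blocks}), two adjacent blocks are never both ``on'', and dead vertices are permanently ``off''.

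To obtain $\max_\sigma\fix(G_\sigma)\ge\max_{H\in\H'(G)}\mis(H)$ I would first reduce the family $\H'(G)$: iterating $H\mapsto H/C$ always yields, after deleting duplicate pendants sharing a vertex (which does not change $\mis$), a graph obtained from $G$ by contracting pairwise disjoint connected vertex sets $B_1,\dots,B_m$ with $|B_j|\ge2$ and attaching one pendant $b_j'$ to each super-vertex $b_j$; so the maximum of $\mis$ over $\H'(G)$ is reached by such a one-round contraction $H$. For a fixed such $H$, I would take the signing $\sigma_H$ with two positive arcs on every edge inside some $B_j$ and two negative arcs elsewhere, and build a bijection between the fixed points of $G_{\sigma_H}$ and the maximal independent sets $I$ of $H$: an ``on'' block gives $b_j\in I$, an ``off'' block gives $b_j'\in I$, and an untouched vertex lies in $I$ iff it is ``on''. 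Independence of $I$ then translates the ``never both on'' constraints and maximality translates the fixed-point equation, the pendant being exactly what encodes the always-admissible ``off'' state of a block of size $\ge2$; this gives $\fix(G_{\sigma_H})=\mis(H)$.

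For the reverse inequality I would fix an arbitrary $\sigma$ and let $H_\sigma$ be the one-round contraction of its positive blocks, with a pendant on each block of size $\ge2$. The same rule $x\mapsto I$ sends each fixed point to an independent set of $H_\sigma$, and it is injective because $I$ prescribes the value of $x$ on every block. If $\sigma$ has no mixed edge, every ``off'' singleton has an ``on'' neighbour, so $I$ is in fact maximal and the map is a bijection onto the maximal independent sets, giving $\fix(G_\sigma)=\mis(H_\sigma)$ as before.

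The main obstacle is the mixed edges: a dead vertex may be ``off'' only because a \emph{positive} in-neighbour is itself ``off'', so $I$ can fail to be maximal and injectivity into independent sets no longer bounds $\fix$ by $\mis$. My plan is to show mixed edges never help, i.e.\ that from any $\sigma$ one can pass to a signing with fewer mixed edges and no fewer fixed points, and repeat until none survive. Concretely I would isolate a dead vertex $v$ and the arc that kills it, follow how its permanent value $0$ propagates along its out-arcs, and argue that turning the offending edge doubly negative---or, when no clean local move is available, contracting $v$ with the neighbour responsible for its death, which stays inside $\H'(G)$---does not lose fixed points. After reaching a mixed-free signing, the bijection of the previous paragraph yields $\fix=\mis(H)$ for the associated one-round contraction $H\in\H'(G)$, closing the bound. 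I expect this elimination of mixed edges, carried out without leaving the world of contractions of $G$, to be the most delicate step.
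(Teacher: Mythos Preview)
Your bijection between fixed points of a simple signing and maximal independent sets of the associated contraction is correct and is precisely the content of the paper's Lemma~\ref{lemma:contraction} applied block by block. With it, your argument for $\max_\sigma\fix(G_\sigma)\ge\max_{H\in\H'(G)}\mis(H)$ goes through; your reduction of iterated contractions to a single round of disjoint contractions-with-pendants is plausible but needs an inductive proof, and the paper avoids it by applying the contraction lemma along the defining sequence $G=H^0\to H^1\to\cdots\to H^k=H$, with signings chosen so that exactly the future contraction sets carry positive edges. Likewise, for a \emph{simple} signing your map $x\mapsto I$ into $\Mis(H_\sigma)$ is injective and lands in maximal independent sets, giving $\fix(G_\sigma)\le\mis(H_\sigma)$ without first forcing the positive edges to form a matching.

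The genuine gap is the elimination of mixed edges, and neither of your two suggested moves comes with an argument. ``Turn the offending edge doubly negative'' does not carry an obvious injection of fixed points: if $\sigma(u\to v)=+$ and $\sigma(v\to u)=-$ then indeed $x_v=0$ in every fixed point, but after the flip $f'_v$ has the factor $\overline{x_u}$ in place of $x_u$, and when $x_u=0$ while every other in-literal at $v$ equals $1$ the old fixed point is destroyed. The natural repair $x'_v=f'_v(x)$, $x'_w=x_w$ otherwise, fails as soon as $v$ has a second mixed edge whose \emph{out}-arc is positive, since that neighbour then sees the wrong value of $x'_v$. Your alternative, ``contract $v$ with the neighbour responsible for its death'', mixes two categories: such a contraction produces an unsigned member of $\H'(G)$, not a new signing of $G$, and you have not said what bounds $\fix(G_\sigma)$ by any quantity attached to it.

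The paper closes this gap with its symmetrization Lemma~\ref{lemma:symmetric} (proved via Lemma~\ref{lemma:cambiaArco}): pick a vertex $v$ that receives at least one positive asymmetric in-arc, flip the signs of \emph{all} asymmetric in-arcs at $v$ simultaneously, and show that $x\mapsto x'$ with $x'_v=f'_v(x)$ and $x'_w=x_w$ otherwise injects $\Fix(G_\sigma)$ into $\Fix(G_{\sigma'})$. The simultaneous flip is exactly what guarantees that every out-neighbour of $v$ sees a consistent value after the change; flipping a single edge, as you propose, does not. Once you have this lemma you may bypass Lemma~\ref{lemma:matching} for the purposes of Theorem~\ref{thm:H'} and contract the positive blocks directly, as you intended.
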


Since every graph $H$ in $\H(G)$ can be obtained from a graph $H'$ in $\H'(G)$ by removing some pending vertices, and since $H$ is then an induced subgraph of $H'$, we have $\mis(H)\leq\mis(H')$, and as a consequence $\max_{H\in \H(G)}\mis(H) \leq \max_{H\in\H'(G)}\mis(H)$. Putting things together, we then obtain the following graph theoretical version of our main result. 

\begin{corollary}
For every graph $G$,
\[
\mis(G) \leq \max_{H\in \H(G)}\mis(H) \leq \max_{H\in\H'(G)}\mis(H)\leq \left(\frac{3}{2}\right)^{m(G)}\mis(G).
\]
\end{corollary}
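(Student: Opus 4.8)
The plan is to verify the four-term chain by treating its three constituent inequalities separately, since each is an almost immediate consequence of a result already established. The leftmost inequality $\mis(G)\leq\max_{H\in\H(G)}\mis(H)$ is trivial: by the stated convention $G\in\H(G)$, so $\mis(G)$ is one of the quantities over which the maximum is taken. The rightmost inequality is the substantive one, but it too requires no new work: I would combine Theorem~\ref{thm:H'}, which gives $\max_{H\in\H'(G)}\mis(H)=\max_\sigma\fix(G_\sigma)$, with the upper bound of Theorem~\ref{thm:main}, namely $\max_\sigma\fix(G_\sigma)\leq(\tfrac32)^{m(G)}\mis(G)$, to conclude $\max_{H\in\H'(G)}\mis(H)\leq(\tfrac32)^{m(G)}\mis(G)$ directly.

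The middle inequality $\max_{H\in\H(G)}\mis(H)\leq\max_{H\in\H'(G)}\mis(H)$ is the only step calling for a genuine argument. Here I would fix an $H\in\H(G)$ attaining the left-hand maximum, write it as the result of deleting some pendant vertices from a graph $H'\in\H'(G)$, and prove $\mis(H)\leq\mis(H')$. The point to be careful about is that $\mis$ is \emph{not} monotone under passing to arbitrary induced subgraphs, so the mere fact that $H$ is an induced subgraph of $H'$ does not by itself give the bound; the argument must exploit the pendant structure. I would therefore establish the one-step claim that deleting a single pendant vertex $v$, adjacent to $u$, never increases the number of maximal independent sets. This is done by exhibiting an injection from the maximal independent sets of $H'-v$ into those of $H'$: a maximal independent set $S$ of $H'-v$ maps to $S$ itself when $u\in S$, and to $S\cup\{v\}$ when $u\notin S$. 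Checking that each image is maximal in $H'$ and that the map is injective is routine, and iterating over the successively deleted pendant vertices yields $\mis(H)\leq\mis(H')\leq\max_{H\in\H'(G)}\mis(H)$.

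Assembling the three inequalities in order then gives the full chain. The main obstacle is thus isolated entirely in the pendant-deletion monotonicity of the middle step, which is elementary once one resists appealing to an induced-subgraph monotonicity of $\mis$ that does not hold in general; everything else is bookkeeping around Theorems~\ref{thm:main} and~\ref{thm:H'}.
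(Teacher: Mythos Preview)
Your proof is correct and follows the same three-step structure as the paper: the first inequality is trivial since $G\in\H(G)$; the last inequality combines Theorem~\ref{thm:H'} with the upper bound of Theorem~\ref{thm:main}; and the middle inequality comes from the observation that every $H\in\H(G)$ arises from some $H'\in\H'(G)$ by deleting pendant vertices.

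One remark: your caution in the middle step is based on a false premise. You write that $\mis$ is \emph{not} monotone under passing to arbitrary induced subgraphs, but in fact it is. The very injection you construct for a pendant vertex $v$ with unique neighbour $u$ works verbatim for \emph{any} vertex $v$: given $S\in\Mis(H'-v)$, send $S$ to $S$ if $S\cap N_{H'}(v)\neq\emptyset$ and to $S\cup\{v\}$ otherwise; in either case the image lies in $\Mis(H')$, and the map is injective since $S$ is recovered as the image minus $v$. Iterating gives $\mis(H)\le\mis(H')$ whenever $H$ is an induced subgraph of $H'$. This is exactly what the paper invokes (in one line) when it says ``since $H$ is then an induced subgraph of $H'$, we have $\mis(H)\le\mis(H')$''. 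So your pendant-specific argument is not wrong, just unnecessarily restricted, and the motivating claim that general induced-subgraph monotonicity fails should be dropped.
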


The upper bound on $\mis(\H'(G))$ is reached for the disjoint union of~$C_4$. Suppose indeed that $G$ is the disjoint union of $k$ copies of $C_4$. Then $m(G)=k$ and $\mis(G)=\mis(C_4)^k=2^k$ thus the upper bound is $3^k$. Now, let $H$ be the disjoint union of $k$ copies of $C_3$. Then $H\in \H(G)$ and $\mis(H)=\mis(C_3)^k=3^k$. Thus $\max_{H\in \H(G)}\mis(H)$ reaches the bound, and this forces $\max_{H\in \H'(G)}\mis(H)$ to reach the bound. Thus, as announced above, for every $k\geq 1$ there exists a graph $G$ such that $m(G)=k$ and $\max_\sigma \fix(G_\sigma)=(\frac{3}{2})^k\mis(G)$. 

Obviously, if $G$ has no induced $C_4$ then $m(G)=0$ and we obtain the following corollary. 

\begin{corollary}\label{cor:graph}
For every graph $G$ without an induced copy of $C_4$,  
\begin{equation}\label{eq:mis=mis}
\max_{H\in \H(G)}\mis(H)=\mis(G).
\end{equation}
\end{corollary}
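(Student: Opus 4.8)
The plan is to obtain Corollary~\ref{cor:graph} as a direct specialization of the chain of inequalities established just above, namely
\[
\mis(G) \leq \max_{H\in \H(G)}\mis(H) \leq \max_{H\in\H'(G)}\mis(H)\leq \left(\tfrac{3}{2}\right)^{m(G)}\mis(G).
\]
All the genuine work has already been carried out in Theorem~\ref{thm:main} and Theorem~\ref{thm:H'}, so the only remaining task is to specialize the parameter $m(G)$ under the $C_4$-free hypothesis.

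First I would observe that the hypothesis kills the exponent. By definition, $m(G)$ is the maximum size of a matching $M$ of $G$ every edge of which lies in an induced copy of $C_4$ containing no other edge of $M$. If $G$ has no induced $C_4$ at all, then no edge can belong to such an induced $C_4$, so the empty matching is the only admissible one and $m(G)=0$. Consequently $(3/2)^{m(G)} = (3/2)^0 = 1$, and the rightmost term of the displayed chain collapses to $\mis(G)$.

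Substituting this into the chain yields $\mis(G) \leq \max_{H\in\H(G)}\mis(H) \leq \mis(G)$, which forces $\max_{H\in\H(G)}\mis(H)=\mis(G)$, the desired equality~\eqref{eq:mis=mis}. The same squeeze simultaneously gives $\max_{H\in\H'(G)}\mis(H)=\mis(G)$, so the whole chain degenerates to equalities.

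I do not expect any real obstacle at this stage: the substance of the argument lies in the upper bound of Theorem~\ref{thm:main} and in the translation of $\max_\sigma\fix(G_\sigma)$ into maximal independent sets of contractions via Theorem~\ref{thm:H'}. Once those are in hand, Corollary~\ref{cor:graph} is immediate, and the only point requiring care is the elementary verification that $m(G)=0$ whenever $G$ is $C_4$-free.
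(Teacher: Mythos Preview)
Your proposal is correct and matches the paper's approach exactly: the paper simply notes that if $G$ has no induced $C_4$ then $m(G)=0$, so the chain of inequalities collapses to equalities. There is nothing to add.
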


This again contrasts with the following complexity result (which is not equivalent to Theorem~\ref{thm:NP-hard}, since  $\max_{H\in \H(G)}\mis(H)$ may be fewer than $\max_{H\in \H'(G)}\mis(H)$).

\begin{theorem}\label{thm:NP-hard2}
Given an graph $G$, it is coNP-hard to decide if equality \eqref{eq:mis=mis} holds, even if $G$ has a unique induced copy of $C_4$. 
\end{theorem}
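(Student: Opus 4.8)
Since $G\in\H(G)$, we always have $\mis(G)\le\max_{H\in\H(G)}\mis(H)$, so equality \eqref{eq:mis=mis} asserts exactly that no contraction of $G$ produces strictly more maximal independent sets than $G$ itself. To prove coNP-hardness it therefore suffices to exhibit a polynomial-time map sending each CNF formula $\varphi$ to a graph $G_\varphi$ with a unique induced $C_4$, such that \eqref{eq:mis=mis} holds for $G_\varphi$ if and only if $\varphi$ is unsatisfiable; since unsatisfiability is coNP-complete, this establishes the theorem. Equivalently, I want some contraction of $G_\varphi$ to strictly increase the number of maximal independent sets precisely when $\varphi$ is satisfiable.

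The plan is to build $G_\varphi$ from a $C_4$-free \emph{formula gadget} $F_\varphi$, whose maximal independent sets encode the assignments of $\varphi$ together with a satisfaction check, joined to a single \emph{switch} $C_4$ on vertices $a,b,c,d$ carrying the $4$-cycle $a\,b\,c\,d$. In $C_4$ only the two states $\{a,c\}$ and $\{b,d\}$ are maximal, whereas contracting one edge turns it into a triangle with three maximal states, $\mis(C_3)=3>2=\mis(C_4)$; this extra third state is the engine of the reduction. The attachment of the switch to $F_\varphi$ is to be arranged so that (i) the two states $\{a,c\}$ and $\{b,d\}$ extend to maximal independent sets of $G_\varphi$ reproducing $\mis(F_\varphi)$ in each branch, independently of $\varphi$, while (ii) after contracting the switch to a triangle, the new third state extends to a global maximal independent set exactly through the configurations corresponding to satisfying assignments of $\varphi$. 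With such an attachment, contracting the switch changes the count by the number of satisfying-assignment-compatible configurations, which is positive iff $\varphi$ is satisfiable; building blocks from the proof of Theorem~\ref{thm:NP-hard} can be recycled for $F_\varphi$ and for the attachment.

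The converse — that when $\varphi$ is unsatisfiable \emph{no} contraction beats $\mis(G_\varphi)$ — is where the work lies, and two tools from the excerpt localize it. First, because $G_\varphi$ has a unique induced $C_4$ we have $m(G_\varphi)\le 1$, so Theorem~\ref{thm:main} together with Theorem~\ref{thm:H'} gives the ceiling $\max_{H\in\H(G_\varphi)}\mis(H)\le\max_{H\in\H'(G_\varphi)}\mis(H)\le\tfrac32\mis(G_\varphi)$, meaning any gain must be carried by the single factor $3/2$ of the unique $C_4$. Second, since $F_\varphi$ is $C_4$-free, Corollary~\ref{cor:graph} shows that any contraction confined to $F_\varphi$ cannot raise the count. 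Classifying the remaining contractions — those destroying the switch and the ``mixed'' ones that merge switch vertices with gadget vertices, possibly creating new induced $C_4$'s — and proving that in the unsatisfiable case each yields at most $\mis(G_\varphi)$ is the main obstacle. I would overcome it by engineering $F_\varphi$ and its attachment to be rigid, so that every profitable contraction is forced to pass through the switch edge and is therefore governed by the satisfiability analysis of step~(ii), with the accounting of Theorem~\ref{thm:main} certifying that no composite contraction exceeds the single-switch gain.

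An alternative route is to reuse the graphs $G$ produced by the reduction of Theorem~\ref{thm:NP-hard} and to show that, for those particular instances, the extra pendant-adding operation of $\H'$ never helps, i.e.\ $\max_{H\in\H(G)}\mis(H)=\max_{H\in\H'(G)}\mis(H)$; equality \eqref{eq:mis=mis} would then coincide with equality \eqref{eq:fix=mis} on these instances and coNP-hardness would transfer directly from Theorem~\ref{thm:NP-hard}. The obstacle in this route is precisely to bound the benefit of pendant additions relative to plain contraction on the reduction graphs, which once more reduces to a structural rigidity argument around the unique $C_4$.
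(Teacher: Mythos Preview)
Your proposal is a plan, not a proof. You correctly identify the difficulty --- ruling out that any contraction helps in the unsatisfiable case --- but ``engineering $F_\varphi$ and its attachment to be rigid'' is not an argument, and the tools you invoke do not close the gap you name. The $\tfrac32$ ceiling from Theorem~\ref{thm:main} bounds the possible gain but does not localize it to any particular contraction; Corollary~\ref{cor:graph} says nothing about mixed contractions that merge switch and gadget vertices (which, as you yourself note, may create new induced $C_4$'s). The ``classification of remaining contractions'' that you defer is precisely the content of the theorem, so at this point the proof has not started.

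The paper takes exactly your ``alternative route'' and executes it with one clean idea you are missing: never classify contractions at all, but pass through signed graphs. The construction is the one you sketch --- a $C_4$-free SAT gadget $G$ with a square $abcd$ attached by joining $a$ to a distinguished set $U$ (Lemmas~\ref{lem:tildeGandU} and~\ref{lem:GandU}). By Theorem~\ref{thm:H'} one has $\max_{H\in\H'(\tilde G)}\mis(H)=\max_\sigma\fix(\tilde G_\sigma)$, and Lemma~\ref{lemma:C_4} together with the uniqueness of the induced $C_4$ forces any optimal $\sigma$ with $\fix(\tilde G_\sigma)>\mis(\tilde G)$ to carry a \emph{single} positive edge $e\in\{ab,bc,cd,da\}$. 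Two short counts (Claims~\ref{claim:misG} and~\ref{claim:fixGab}) give $\fix(\tilde G_e)=2\,\mis(G)+\mis(G-U)$ for each such $e$ and show this exceeds $\mis(\tilde G)$ iff $G$ has a maximal independent set disjoint from $U$. The passage from $\H'$ to $\H$ --- your ``obstacle'' --- is then a four-case check: for each such $e$ the plain contraction $H_e\in\H(\tilde G)$ already satisfies $\mis(H_e)=2\,\mis(G)+\mis(G-U)=\mis(\tilde G/e)$, so on these instances $\max_{H\in\H(\tilde G)}\mis(H)=\max_{H\in\H'(\tilde G)}\mis(H)=\max_\sigma\fix(\tilde G_\sigma)$, and Theorems~\ref{thm:NP-hard} and~\ref{thm:NP-hard2} fall out simultaneously.
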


The above graph theoretical formulations need much less material to be stated. They seem, however, more difficult to interpret than the network formulations. Beside, from a technical point of view, the induced $C_4$, which play a key role, seem more easy to manipulate with the network versions. Take for example Corollary~\ref{cor:graph}. It states that the contraction of any set of edges in a $C_4$-free graph cannot increase the number of maximal independent sets. However, the contraction of a single edge could produce many induced $C_4$, and this prevents a proof by induction on the number of contractions. This is for this kind of phenomena that the network setting seems more convenient.  

\paragraph{Organization} 
The paper is organized as follows. In Section~\ref{sec:preliminaries} we define with more precision the notions involved in the mentioned results. Then we prove Theorems~\ref{thm:main} and \ref{thm:H'} in Sections~\ref{sec:main}, \ref{sec:H'}, respectively. Theorems \ref{thm:NP-hard} and \ref{thm:NP-hard2} are then proved simultaneously in Section \ref{sec:NP-Hard}.

\section{Preliminaries}\label{sec:preliminaries}

The vertex set of a digraph $G$ is denoted $V=V(G)$ and its arc set is denoted $E=E(G)$. An arc from a vertex $u$ to $v$ is denoted $uv$. An arc $vv$ is a {\em loop}. The in-neighbor of a vertex $v$ is denoted $N_G(v)$, and if $U$ is a set of vertices then $N_G(U)=\bigcup_{v\in U}N_G(v)$. We say that $G$ is {\em symmetric} if $uv\in E$ for every $vu\in E$. The strongly connected components of $G$ are viewed as set of vertices, and a component $C$ is {\em trivial} if it contains a unique vertex. We viewed {\em (undirected) graphs} as loop-less symmetric digraphs. The set of maximal independent sets of $G$ is denoted $\Mis(G)$ and $\mis(G)=|\Mis(G)|$. The subgraph of $G$ induced by a set of vertices $U$ is denoted $G[U]$, and $G-U$ denotes the subgraph induced by $V-U$. 

A {\em signed digraph} $G_\s$ is a digraph $G$ together with an arc-labeling function $\s$ that gives a positive or negative sign to each arc of $G$ (zero sign is not considered here). Such a labeling is called a {\em repartition of signs} in $G$. 
If $U\subseteq V$, then $G_\s[U]=G[U]_{\s_{|U}}$ and $G_\s-U=G_\s[V-U]$. We say that $G_\s$ is a {\em simple signed graph} if $G$ is a graph and $\s$ is {\em symmetric}, that is, $\s(uv)=\s(vu)$ for all $uv\in E$. Equivalently, $G_\s$ is a simple signed graph if $G$, $G^+$ and $G^-$ are graphs (see an illustration in Figure~\ref{fig:signed_graph_9}). In a simple signed graph, directions do not matter, and we thus speak about positive and negative {\em edges}, instead of positive and negative arcs. 

\begin{figure}[hp]
\centering
\newcommand{\nodosSG}{
\node[vnode] (1) at (0,0){};
\node[vnode,shift={(45:\unit)}] (2) at (1){};
\node[vnode,shift={(-45:\unit)}] (3) at (1){};
\node[vnode,shift={(45:\unit)}] (4) at (2){};
\node[vnode,shift={(-45:\unit)}] (5) at (2){};
\node[vnode,shift={(-45:\unit)}] (6) at (3){};
\node[vnode,shift={(45:\unit)}] (7) at (5){};
\node[vnode,shift={(65:1.1\unit)}] (8) at (6){};
\node[vnode,shift={(-45:\unit)}] (9) at (7){};
\draw (5) circle (2\unit);
}
\begin{tabular}{ccc}
\begin{tikzpicture}
 \nodosSG
 \path[bend right=17]
 (1) edge[negative] (2)
 (2) edge[positive] (1)
 (1) edge[negative] (3)
 (3) edge[negative] (1)
 (2) edge[positive] (4)
 (4) edge[negative] (2)
 (2) edge[negative] (5)
 (5) edge[positive] (2)
 (3) edge[negative] (5)
 (5) edge[positive] (3)
 (3) edge[positive] (6)
 (6) edge[negative] (3)
 (4) edge[positive] (7)
 (7) edge[negative] (4)
 (5) edge[positive] (7)
 (7) edge[positive] (5)
 (6) edge[positive] (8)
 (8) edge[negative] (6)
 (6) edge[negative, bend right=80] (9)
 (9) edge[positive, bend left=40] (6)
 (9) edge[negative] (7)
 (7) edge[positive] (9)
 (9) edge[positive] (8)
 (8) edge[negative] (9)
;
\end{tikzpicture}& \begin{tikzpicture}
 \nodosSG
 \path[bend right=17,positive]
 (2) edge (1)
 (2) edge (4)
 (5) edge (2)
 (5) edge (3)
 (3) edge (6)
 (4) edge (7)
 (5) edge (7)
 (7) edge (5)
 (6) edge (8)
 (9) edge[bend left=40] (6)
 (7) edge (9)
 (9) edge (8)
;
\end{tikzpicture}&\begin{tikzpicture}
 \nodosSG
 \path[bend right=17,negative]
 (1) edge (2)
 (1) edge (3)
 (3) edge (1)
 (4) edge (2)
 (2) edge (5)
 (3) edge (5)
 (6) edge (3)
 (7) edge (4)
 (8) edge (6)
 (6) edge[bend right=80] (9)
 (9) edge (7)
 (8) edge (9)
;
\end{tikzpicture}\\
$G_{\s_1}$ & $G^+_{\s_1}$& $G^-_{\s_1}$\\[4mm]
\begin{tikzpicture}
 \nodosSG
 \path[bend right=17]
 (1) edge[negative] (2)
 (2) edge[negative] (1)
 (1) edge[negative] (3)
 (3) edge[negative] (1)
 (2) edge[positive] (4)
 (4) edge[positive] (2)
 (2) edge[positive] (5)
 (5) edge[positive] (2)
 (3) edge[negative] (5)
 (5) edge[negative] (3)
 (3) edge[positive] (6)
 (6) edge[positive] (3)
 (4) edge[positive] (7)
 (7) edge[positive] (4)
 (5) edge[positive] (7)
 (7) edge[positive] (5)
 (6) edge[negative] (8)
 (8) edge[negative] (6)
 (6) edge[negative, bend right=80] (9)
 (9) edge[negative, bend left=40] (6)
 (9) edge[negative] (7)
 (7) edge[negative] (9)
 (9) edge[negative] (8)
 (8) edge[negative] (9)
;
\end{tikzpicture}&\begin{tikzpicture}
 \nodosSG
 \path[bend right=17,positive]
 (2) edge (4)
 (4) edge (2)
 (2) edge (5)
 (5) edge (2)
 (3) edge (6)
 (6) edge (3)
 (4) edge (7)
 (7) edge (4)
 (5) edge (7)
 (7) edge (5)
;
\end{tikzpicture}& \begin{tikzpicture}
 \nodosSG
 \path[bend right=17,negative]
 (1) edge (2)
 (2) edge (1)
 (1) edge (3)
 (3) edge (1)
 (3) edge (5)
 (5) edge (3)
 (6) edge (8)
 (8) edge (6)
 (6) edge[bend right=80] (9)
 (9) edge[bend left=40] (6)
 (9) edge (7)
 (7) edge (9)
 (9) edge (8)
 (8) edge (9)
;
\end{tikzpicture}\\
$G_{\s_2}$ & $G^+_{\s_2}$& $G^-_{\s_2}$\\[4mm]
\begin{tikzpicture}
 \nodosSG
 \path
 (1) edge[dneg] (2)
 (1) edge[dneg] (3)
 (2) edge[dpos] (4)
 (2) edge[dpos] (5)
 (3) edge[dneg] (5)
 (3) edge[dpos] (6)
 (4) edge[dpos] (7)
 (5) edge[dpos] (7)
 (6) edge[dneg] (8)
 (6) edge[dneg] (9)
 (9) edge[dneg] (7)
 (9) edge[dneg] (8)
;
\end{tikzpicture}&\begin{tikzpicture}
 \nodosSG
 \path[dpos]
 (2) edge (4)
 (2) edge (5)
 (3) edge (6)
 (4) edge (7)
 (5) edge (7)
;
\end{tikzpicture}& \begin{tikzpicture}
 \nodosSG
 \path[dneg]
 (1) edge (2)
 (1) edge (3)
 (3) edge (5)
 (6) edge (8)
 (6) edge (9)
 (9) edge (7)
 (9) edge (8)
;
\end{tikzpicture}\\ 
Other representation of $G_{\s_2}$ & Other representation of $G^+_{\s_2}$& Other representation of $G^-_{\s_2}$
\end{tabular}
\caption{A signed graph $G_{\s_1}$ and a {\em simple} signed graph $G_{\s_2}$.}\label{fig:signed_graph_9}
\end{figure}

A {\em Boolean network} on a finite set $V$ is a function 
\[
f:\B^V\to\B^V,\qquad x=(x_v)_{v\in V}\mapsto f(x)=(f_v(x))_{v\in V}
\]
Thus each component $f_v$ is a Boolean function from $\B^V$ to $\B$ (often called local update function). The {\em interaction graph} of $f$ is the digraph $G$ on $V$ such that for all $u,v\in V$ there is an $uv$ if and only if $f_v$ depends on $x_u$, that is, there exists $x,y\in\B^V$ that only differs in $x_u\neq y_u$ such that $f_v(x)\neq f_v(y)$. We say that $f$ is a {\em conjunctive network} if, for all $v\in V$, $f_v$ is a conjunction of positive and negative literals. If $f$ is a conjunctive network, then the {\em signed interaction graph} of $f$ is the signed digraph $G_\s$ where $G$ is the interaction graph of $f$, and where for all arc $uv$ we have $\s(uv)=+$ if $x_u$ is a positive literal of $f_v$ and $\s(uv)=-$ if $\overline{x_u}$ is a negative literal of $f_v$. Conversely, given any signed digraph $G_\s$, there is clearly a unique conjunctive network whose signed interaction graph is $G_\s$, namely the conjunctive network $f^{G_\s}$ defined by 
\[
\forall v\in V,\quad\forall x\in\B^V, 
\qquad f^{G_\s}_v(x)=\prod_{u\in N_{G^+_\s}(v)} x_u\prod_{u\in N_{G^-_\s}(v)}1-x_u.
\]
We call $f^{G_\s}$ the conjunctive network {\em on} $G_\s$. We denote by $\Fix(G_\s)$ the set of fixed points of $f^{G_\s}$ and by $\fix(G_\s)$ the number of fixed points in $f^{G_\s}$. 
%
%

We will often see a fixed point as the characteristic function of a subset of $V$. For that we set $\ONE(x)=\{v\in V\,|\,x_v=1\}$, and we say that a subset $S\subseteq V$ is a {\em fixed set} of $G_\s$ if $S=\ONE(x)$ for some fixed point $x\in\Fix(G_\s)$. By abuse of notation, the set of fixed sets of $G_\s$ is also denoted $\Fix(G_\s)$. Thus, for all $S\subseteq V$, we have $S\in\Fix(G_\s)$ if and only if for all $v\in V$ we have  
\begin{equation}\label{eq:fixedset}
v\in S~\iff~ N_{G_\s^-}(v)\cap S=\emptyset\text{ and }N_{G_\s^+}(v)\subseteq S.
\end{equation}

Let $G_-$ be the signed digraph obtain from $G$ by labeling negatively every arc. Then $G^-_-=G$ and $G^+_-=\emptyset$, thus the above equivalence becomes: $v\in S$ if and only if $N_G(v)\cap S=\emptyset$. If $G$ is a graph, this is precisely the definition of a maximal independent set. Thus we have the following basic relation between fixed sets and maximal independent sets, already exhibited in \cite{ARS14}.

\begin{proposition}\label{pro:mis}
For every graph $G$ we have $\Fix(G_-)=\Mis(G)$.
\end{proposition}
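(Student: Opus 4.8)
The plan is to prove the proposition by directly specializing the fixed-set characterization \eqref{eq:fixedset} to the labeling $G_-$, and then matching the resulting condition to the definition of a maximal independent set.

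First I would instantiate \eqref{eq:fixedset} at $G_-$. Since every arc of $G_-$ is negative, the positive part $G^+_-$ has no arc while $G^-_-=G$; hence $N_{G^+_-}(v)=\emptyset$ and $N_{G^-_-}(v)=N_G(v)$ for every vertex $v$. The constraint $N_{G^+_-}(v)\subseteq S$ therefore holds vacuously, and \eqref{eq:fixedset} collapses to the statement that a set $S\subseteq V$ lies in $\Fix(G_-)$ if and only if
\[
\forall v\in V,\qquad v\in S~\iff~ N_G(v)\cap S=\emptyset .
\]

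Second, I would split this biconditional into its two implications and identify each with a defining property of maximal independent sets. The implication $v\in S\Rightarrow N_G(v)\cap S=\emptyset$ says that no vertex of $S$ has a neighbor in $S$, that is, $S$ is independent (here the loop-less hypothesis on $G$ guarantees $v\notin N_G(v)$, so this really is independence and not a weaker condition). The converse implication $N_G(v)\cap S=\emptyset\Rightarrow v\in S$ is, by contraposition, $v\notin S\Rightarrow N_G(v)\cap S\neq\emptyset$: every vertex outside $S$ has a neighbor in $S$. For an independent set $S$ this last condition is exactly maximality, being equivalent to the impossibility of adding any vertex while preserving independence. Combining the two implications, $S\in\Fix(G_-)$ if and only if $S$ is a maximal independent set, which yields $\Fix(G_-)=\Mis(G)$.

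I do not anticipate any genuine obstacle: the statement is a definitional translation, already half-carried-out in the paragraph preceding the proposition. The only two points deserving a line of care are the vacuous satisfaction of the positive constraint (so that the ``member'' direction of \eqref{eq:fixedset} reduces to plain independence) and the explicit use of the loop-free hypothesis, which is what forces the membership condition to coincide with independence.
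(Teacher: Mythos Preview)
Your proposal is correct and follows essentially the same argument as the paper, which carries out exactly this specialization of \eqref{eq:fixedset} in the paragraph immediately preceding the proposition. Your version is slightly more explicit in separating the two implications and in flagging the role of the loop-less hypothesis, but the underlying reasoning is identical.
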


We immediately obtain
\begin{equation}
\mis(G)\leq \max_\sigma\fix(G_\sigma),
\end{equation}
and thus only the upper bound in Theorem~\ref{thm:main} has to be proven. 

Another important observation is that for every signed digraph $G_\s$, if $C$ is a strongly connected component of $G^+_\s$, then for all $S\in\Fix(G_\s)$ we have either $C\cap S=\emptyset$ or $C\subseteq S$. As a consequence, if $G^+_\s$ is strongly connected and $S\in\Fix(G_\s)$ then either $S=\emptyset$ or $S=V$. Thus $G_\s$ has at most two fixed sets: $\emptyset$ and $V$. Clearly, $V$ is a fixed set if and only if $G_\s$ has no negative arcs, and $\emptyset$ is a fixed set in any case. We have thus the following proposition. 

\begin{proposition}\label{pro:basic1}
For every signed digraph $G_\s$ such that $G^+_\s$ is strongly connected, we have $\Fix(G_\s)=\{\emptyset\}$ if $G_\sigma$ has at least one negative arc, and $\Fix(G_\s)=\{\emptyset,V\}$ otherwise. 
\end{proposition}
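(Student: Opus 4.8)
The plan is to deduce everything from the fixed-set characterization \eqref{eq:fixedset} together with a single propagation argument along positive arcs. The step I would isolate first is the component claim already announced in the discussion above: for every strongly connected component $C$ of $G^+_\s$ and every $S\in\Fix(G_\s)$, one has $C\cap S=\emptyset$ or $C\subseteq S$. To prove it I would use only the forward implication of \eqref{eq:fixedset}, which says that $v\in S$ forces $N_{G^+_\s}(v)\subseteq S$; equivalently, whenever the head of a positive arc lies in $S$, so does its tail. Assume $C\cap S\neq\emptyset$ and fix $w\in C\cap S$. For an arbitrary $u\in C$, strong connectivity of $C$ yields a directed path $u=p_0\to p_1\to\cdots\to p_k=w$ in $G^+_\s$; reading it from $w$ backwards and applying the implication along each arc gives $p_k\in S\Rightarrow p_{k-1}\in S\Rightarrow\cdots\Rightarrow p_0\in S$, hence $u\in S$. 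As $u$ was arbitrary, $C\subseteq S$, proving the claim.

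Next I would specialize to the hypothesis of the proposition. To say that $G^+_\s$ is strongly connected is exactly to say that $V$ is a single strongly connected component of $G^+_\s$, so the claim with $C=V$ forces $S=\emptyset$ or $S=V$ for every $S\in\Fix(G_\s)$. It then remains to decide which of these two sets are fixed, which I would settle by reading \eqref{eq:fixedset} at $S=V$ and at $S=\emptyset$ (equivalently, by evaluating $f^{G_\s}$ at the all-ones and all-zeros points). At $S=V$ the condition collapses to $N_{G^-_\s}(v)=\emptyset$ for every $v$, which holds for all $v$ precisely when $G_\s$ has no negative arc; thus $V\in\Fix(G_\s)$ if and only if $G_\s$ has no negative arc. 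At $S=\emptyset$ the condition collapses to $N_{G^+_\s}(v)\neq\emptyset$ for every $v$, i.e.\ every vertex must have a positive in-neighbor; but this is guaranteed here, since a strongly connected $G^+_\s$ has positive in-degree at least one at every vertex, so $\emptyset\in\Fix(G_\s)$ in every case.

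Putting the two evaluations together gives the statement: if $G_\s$ carries at least one negative arc then $V$ is discarded and $\Fix(G_\s)=\{\emptyset\}$, whereas if $G_\s$ has no negative arc then both extreme sets survive and $\Fix(G_\s)=\{\emptyset,V\}$. I do not expect a real obstacle here, since the heart of the argument is the one-line propagation along positive paths. The only points that need care are the orientation convention (recall that $N_G(v)$ denotes the set of \emph{in}-neighbors, so that membership in a fixed set propagates from the head to the tail of each positive arc) and the empty-product bookkeeping when $f^{G_\s}$ is evaluated at the two extreme configurations.
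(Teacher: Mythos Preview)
Your proof is correct and follows exactly the approach the paper takes in the paragraph immediately preceding the proposition: the propagation of membership in $S$ backwards along positive arcs to establish the component dichotomy, specialization to $C=V$, and then the direct check of the two extreme sets via \eqref{eq:fixedset}. You simply spell out the details (the path argument and the two evaluations) that the paper leaves to the reader.
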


\noindent
{\em Remark about notations.} In the following, we mainly consider signed digraphs. In order to simplify notations, given a signed digraph $G=H_\s$, we set $G^+=H^+_\s$ and $G^-=H^-_\s$. Hence, if we consider a signed digraph $G$, the underlying repartition of sign is given by the two spanning subgraphs $G^+$ and $G^-$. Furthermore, all concepts that do not involve sign are applied on $G$ or its underlying unsigned digraph $H$ indifferently. For instance, we write $\mis(G)$ or $\mis(H)$ indifferently.    

\section{Proof of Theorem~\ref{thm:main}}\label{sec:main}

As said above, we only have to prove the upper-bound. For a graph $G$, let $m'(G)$ be the maximum size of a matching $M$ of $G$ such that every edge $uv$ of $M$ is contained in an induced copy of $C_4$ in which $u$ and $v$ are the only vertices adjacent to an edge of $M$. Clearly, we have $m'(G)\leq m(G)$, see Figure \ref{fig_m} for an illustration. The upper bound that we will prove, the following, is thus slightly stronger than that of Theorem~\ref{thm:main}.  

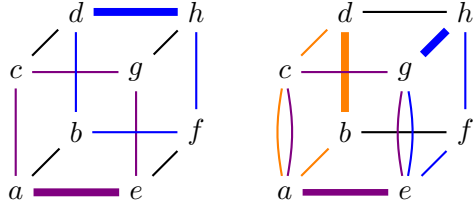
\begin{figure}
\[
\begin{array}{ccc}
\begin{tikzpicture}
\node (000) at (-1.2,-1.2){$a$};
\node (001) at (-0.4,-0.4){$b$};
\node (010) at (-1.2, 0.4){$c$};
\node (011) at (-0.4, 1.2){$d$};
\node (100) at ( 0.4,-1.2){$e$};
\node (101) at ( 1.2,-0.4){$f$};
\node (110) at ( 0.4, 0.4){$g$};
\node (111) at ( 1.2, 1.2){$h$};
\path[thick]
(000) edge[violet,line width=3] (100)
(000) edge[violet] (010)
(000) edge (001)
(100) edge[violet] (110)
(100) edge (101)
(010) edge[violet] (110)
(010) edge (011)
(001) edge[blue] (101) 
(001) edge[blue] (011)
(110) edge (111)
(101) edge[blue] (111)
(011) edge[blue,line width=3] (111)
;
\end{tikzpicture}
&&
\begin{tikzpicture}
\node (000) at (-1.2,-1.2){$a$};
\node (001) at (-0.4,-0.4){$b$};
\node (010) at (-1.2, 0.4){$c$};
\node (011) at (-0.4, 1.2){$d$};
\node (100) at ( 0.4,-1.2){$e$};
\node (101) at ( 1.2,-0.4){$f$};
\node (110) at ( 0.4, 0.4){$g$};
\node (111) at ( 1.2, 1.2){$h$};
\path[thick]
(011) edge (111)
(101) edge[blue] (111)
(110) edge[blue,line width=3] (111)
(001) edge[orange,line width=3] (011)
(001) edge (101) 
(010) edge[orange] (011)
(010) edge[violet] (110)
(100) edge[blue] (101)
(100) edge[blue,bend right=10] (110)
(100) edge[violet,bend left=10] (110)
(000) edge[orange] (001)
(000) edge[orange,bend left=10] (010)
(000) edge[violet,bend right=10] (010)
(000) edge[violet,line width=2.5] (100)
;
\end{tikzpicture}
\end{array}
\]
\caption{For the three-dimensional cube $Q_3$, we have $m'(Q_3)=2$ and $m(Q_3)=3$. In the left, the matching $M=\{ae,dh\}$ illustrates the fact that $m'(Q_3)=2$: the edge $ae$ belongs to an induced $C_4$ which does not contain $d$ neither $h$, namely $aegc$; and the edge $dh$ belongs to an induced $C_4$ which does not contain $a$ neither $e$, namely $dhfb$. In the right, the matching $M=\{ae,gh,db\}$ illustrates the fact that $m(Q_3)=3$: the edge $ae$ belongs to an induced $C_4$ which does not contain $gh$ neither $db$, namely $aegc$; the edge $gh$ belongs to an induced $C_4$ which does not contain $ae$ neither $db$, namely $ghfe$; and the edge $db$ belongs to an induced $C_4$ which does not contain $ae$ neither $gh$, namely $dbac$.}\label{fig_m}
\end{figure}

\begin{theorem}\label{thm:upper_bound}
For every signed graph $G$,
\[
\fix(G)\leq\left(\frac{3}{2}\right)^{m'(G)}\mis(G).
\]
\end{theorem}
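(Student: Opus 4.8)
The inequality $\mis(G)\le\fix(G)$ being already settled, everything rests on the upper bound, and I would begin with the easy reductions. Fixed sets are governed by the purely local condition \eqref{eq:fixedset}, so $\fix$ is multiplicative over the connected components of the underlying graph; since $\mis$ is likewise multiplicative and $m'$ is additive, it suffices to treat $G$ with connected underlying graph. If $G$ has no positive edge, then $G$ is all-negative and Proposition~\ref{pro:mis} gives $\fix(G)=\mis(G)$, so the bound holds with room to spare. The remaining case is that $G^+$ has a non-trivial connected component.

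The engine of the argument is the \emph{all-or-nothing} property recalled before Proposition~\ref{pro:basic1}: each connected component $C$ of $G^+$ satisfies $C\subseteq S$ or $C\cap S=\emptyset$ for every fixed set $S$. I would exploit it through a contraction identity. Writing a fixed set as the union of the components $C_1,\dots,C_k$ of $G^+$ it contains, condition \eqref{eq:fixedset} turns into combinatorial constraints on the index set $T$: in the graph $G^\flat$ obtained by contracting each $C_i$ to a vertex and keeping a negative edge between two such vertices whenever $G^-$ joins the corresponding components, $T$ must be independent; a contracted vertex carrying an internal negative edge can never lie in $T$; a contracted \emph{singleton} out of $T$ must have a neighbour in $T$; whereas a contracted \emph{non-singleton} out of $T$ is under no maximality constraint, since each of its vertices already has a positive neighbour left outside $S$ (so the right-hand side of \eqref{eq:fixedset} fails automatically). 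Encoding ``no maximality constraint'' by attaching a pendant vertex to each non-singleton contracted vertex (this is precisely the operation $G/C$ defining $\H'$) and pruning the components with an internal negative edge, I obtain a graph $H$ with $\fix(G)=\mis(H)$. This is the mechanism behind Theorem~\ref{thm:H'}; I have checked it on the signed $C_4$ with one positive edge, where $H$ is a triangle with a pendant and $\fix(G)=\mis(H)=3=\tfrac32\,\mis(C_4)$.

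It then remains to prove the graph-theoretic bound $\mis(H)\le(\tfrac32)^{m'(G)}\mis(G)$, i.e.\ that contracting the positive components inflates the number of maximal independent sets by at most $(\tfrac32)^{m'(G)}$. The single extra factor $\tfrac32=\mis(C_3)/\mis(C_4)$ is exactly what one pays to turn an induced $C_4$ into a triangle by contracting one of its positive edges, as the example shows. I would run this as an induction resolving the positive edges one component at a time, charging a factor $\tfrac32$ to, and only to, a contraction that genuinely creates a new induced $C_4$, and recording the contracted edge into a matching $M$; the side condition in the definition of $m'$ — that the two endpoints of a matching edge be the only matching-endpoints of its $C_4$ — is what should guarantee that the $C_4$'s charged at different steps stay disjoint enough to be witnessed simultaneously, so that the final matching has size at most $m'(G)$.

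The hard part is twofold. First, the base case: when $G$ is $C_4$-free one must show $\fix(G)=\mis(H)\le\mis(G)$, i.e.\ Corollary~\ref{cor:graph}; note that here $H$ itself need \emph{not} be $C_4$-free (contracting a positive edge can fuse an induced $C_5$ into a $C_4$), which is exactly why the statement cannot be reduced to one about $C_4$-free graphs and must be argued directly, via a carefully designed injection $\Fix(G)\hookrightarrow\Mis(G)$ — or a direct comparison of the two families — in which $C_4$-freeness of $G$ is used to control how the positive components interact with their common neighbours. Second, as the authors themselves warn, a single contraction may create \emph{many} induced $C_4$'s at once, so a naive induction on the number of contractions in the pure graph setting breaks down; the remedy is to carry the induction out in the network setting, where the positive edges are handled locally and the matching $M$ is extracted globally at the end, rather than tracking the uncontrolled creation of $C_4$'s contraction by contraction. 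Making this bookkeeping yield a matching with the stated ``only endpoints'' property is the delicate point I expect to spend the most effort on.
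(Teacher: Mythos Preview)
Your plan routes everything through the contraction identity $\fix(G)=\mis(H)$ for some $H\in\H'(G)$ and then tries to bound $\mis(H)$ against $\mis(G)$ graph-theoretically. That is precisely the detour the authors warn is hard to run, and your last paragraph concedes you do not yet know how to close it. The paper never takes that detour: it proves the upper bound entirely in the network setting, via two preliminary reductions that are absent from your plan, followed by an edge-by-edge peeling.

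The first reduction (Lemma~\ref{lemma:symmetric}) replaces the signed graph $G$ by a \emph{simple} signed graph $G'$ on the same underlying graph with $\fix(G)\le\fix(G')$. You need something like this even for your own route: your identity $\fix(G)=\mis(H)$ fails for non-simple signed graphs (a single pair of arcs with opposite signs already gives $\fix(G)=0$, which no $\mis(H)$ can match), so your argument as written does not cover the statement as posed.

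The second reduction (Lemma~\ref{lemma:matching}) is the step that dissolves your bookkeeping problem: by further changing signs one may assume the positive edges form a \emph{matching}, still with $\fix$ non-decreasing. Once each positive component is a single edge, ``resolving one component at a time'' is just: make one positive matching edge $ab$ negative. Lemma~\ref{lemma:C_4} shows this costs at most a factor $\tfrac32$, and \emph{only} when $G$ contains an induced $C_4$ through $ab$ in which $a,b$ are the sole vertices touching a positive edge. One first flips, at no cost, every positive edge failing that condition; the surviving positive edges then form a matching $M$ each of whose members sits in such a $C_4$, which is exactly a certificate that $|M|\le m'(G)$. Flipping the remaining $|M|$ edges yields the factor $(\tfrac32)^{|M|}$ and lands on the all-negative graph, where $\fix=\mis$.

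So the matching that witnesses $m'(G)$ is not assembled from contractions at all: it \emph{is} the set of positive edges left after the two reductions, and the $m'$ side-condition is built into the hypothesis of the peeling lemma, not recovered after the fact. Your phrase ``recording the contracted edge into a matching $M$'' has no content when a positive component has more than one edge; without the analogue of Lemma~\ref{lemma:matching}, there is no edge to record, and your induction has no base object to charge the $\tfrac32$'s to.
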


The proof is based on the following three lemmas, which are proved in Sections \ref{sec:symmetrization}, \ref{sec:reduction} and \ref{sec:suppression} respectively.

\begin{lemma}\label{lemma:symmetric}
For every signed graph $G$ there exists a simple signed graph $G'$ obtained from $G$ by changing the repartition of signs such that $\fix(G)\leq\fix(G')$.
\end{lemma}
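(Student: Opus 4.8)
The plan is to reach a simple signing by changing signs one edge at a time, each step lowering the number of \emph{asymmetric} edges---those $\{u,v\}$ with $\s(uv)\neq\s(vu)$---while never decreasing $\fix$. Since we only re-sign arcs, the underlying graph is preserved throughout, so the output is automatically a signing $G'$ of the same graph, and I would argue by induction on the number of asymmetric edges. If there are none, then $G$ is already simple and there is nothing to prove; otherwise fix an asymmetric edge $\{u,v\}$, say with $\s(uv)=+$ and $\s(vu)=-$.

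First I would isolate the structural fact that makes the reduction possible: \emph{$v$ lies in no fixed set of $G$}. Indeed, by \eqref{eq:fixedset}, the positive arc from $u$ to $v$ forces $u\in S$ whenever $v\in S$, while the negative arc from $v$ to $u$ forces $u\notin S$ whenever $u\in S$; together these make $v\in S$ impossible. Thus every $S\in\Fix(G)$ avoids $v$, and, since the left side of $v$'s equivalence in \eqref{eq:fixedset} is then false, the right side must be false too at every such $S$.

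Next I would let $G'$ be obtained from $G$ by re-signing the single arc $uv$ from $+$ to $-$, so that $\{u,v\}$ becomes negative in both directions; this strictly lowers the number of asymmetric edges, and, because only an in-arc of $v$ is touched, only the equivalence at $v$ differs between $G$ and $G'$. It then remains to show $\fix(G)\le\fix(G')$, which I would obtain from an injection $\Phi:\Fix(G)\to\Fix(G')$. For $S\in\Fix(G)$ the identity already works whenever $v$'s new (now negative) condition is \emph{false} at $S$, since all the unchanged equivalences still hold and $v\notin S$ is consistent. The remaining case is when re-signing $uv$ makes $v$'s condition \emph{true} at $S$, which happens exactly when $u\notin S$ and all of $v$'s other neighbour constraints are already satisfied; here $v$ is forced to enter, and I would define $\Phi(S)$ as the fixed set of $G'$ reached by inserting $v$ and then resolving the resulting violations of \eqref{eq:fixedset} along the incident arcs until a genuine fixed set is obtained.

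The hard part will be precisely this last step: showing that the resolution process is well defined (it terminates at an element of $\Fix(G')$) and that $\Phi$ is injective with image disjoint from the identity part. The subtlety is that inserting $v$ can fire negative out-arcs of $v$ whose endpoints lie in $S$, those endpoints may in turn sit on \emph{other} asymmetric edges at $v$, and removing them can both break positive dependents (forcing further removals) and destroy the witness that kept some vertex out (forcing insertions), so the cascade is genuinely bidirectional. I would try to control it using the forcing carried by the positive arcs together with the deadness of $v$---propagating membership backwards along positive arcs and exclusion forwards along negative arcs---and hope that the relevant fragment of the positive-arc digraph is acyclic enough for the process to converge and be reversible, the reversibility being what yields injectivity. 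An alternative I would keep in reserve is to re-sign $vu$ to $+$ instead: then $u$ and $v$ become mutually forcing, one may contract them, and the problem reduces to a graph with one fewer vertex---at the cost of importing the contraction arguments developed later and checking that contracting a doubly-positive edge does not decrease $\fix$.
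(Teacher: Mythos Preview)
Your observation that $v$ lies in no fixed set of $G$ is correct and is exactly the fact the paper uses. The gap is in the injection. By flipping only the single arc $uv$, you leave the remaining asymmetric in-arcs at $v$ untouched, and this is precisely what forces the cascade you describe. You do not show that the cascade terminates, lands in $\Fix(G')$, or is injective; ``hoping the relevant fragment of the positive-arc digraph is acyclic enough'' is not an argument, and there is no reason it should be. Your fallback of flipping $vu$ to $+$ and then contracting is also unusable here: contraction changes the underlying graph, whereas the lemma asks for a signing of the \emph{same} graph, and the contraction facts you would need are proved later in the paper.

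The paper sidesteps the cascade entirely by making a coarser move: instead of one arc, flip \emph{all} asymmetric in-arcs at $v$ simultaneously, i.e.\ every arc $wv$ with $\s(wv)\neq\s(vw)$ (this set $A(v)$ contains your positive $uv$). After this single step the signs at $v$ are fully symmetric, $\s'(wv)=\s'(vw)$ for every neighbour $w$, and now the trivial one-coordinate map $x'_v=f'_v(x)$, $x'_w=x_w$ for $w\neq v$, is already an injection from $\Fix(G)$ into $\Fix(G')$, with no propagation at all. Injectivity comes from $x_v=0$. For the image, suppose $x'_v=1$ broke the constraint at some neighbour $w$; then the out-arc $vw$ is, say, positive, so $x_v=0$ forces $x_w=0$, while $f'_v(x)=1$ with $x_w=0$ forces $wv$ to be negative in $G'$---contradicting the symmetry at $v$ just achieved (the negative case is symmetric). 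So the missing idea is not a clever cascade control but the realisation that re-signing all of $A(v)$ at once buys exactly the local symmetry needed to make the one-coordinate injection work.
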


\begin{lemma}\label{lemma:matching}
For every simple signed graph $G$ there exists a simple signed graph $G'$ obtained from $G$ by changing the repartition of sign such that $\fix(G)\leq\fix(G')$ and such that the set of positive edges of $G'$ is a matching.
\end{lemma}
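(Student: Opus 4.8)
The plan is to build the target graph from a single maximal matching of the positive part and then to compare fixed sets directly. Let $M$ be a maximal matching of the graph $G^+$, and let $G'$ be the simple signed graph on the same edge set as $G$ whose positive edges are exactly the edges of $M$, every other edge being negative. Then $(G')^+=M$ is a matching, so it only remains to establish $\fix(G)\le\fix(G')$. The central tool is the all-or-nothing principle observed just before Proposition~\ref{pro:basic1}: for every $S\in\Fix(G)$ and every connected component $C$ of $G^+$, either $C\subseteq S$ or $C\cap S=\emptyset$. I choose $M$ \emph{maximal} precisely so that the vertices it leaves uncovered form an independent set of $G^+$; this independence is what will later let me switch such vertices on without creating conflicts.

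Next I would attempt an explicit injection $\Phi:\Fix(G)\hookrightarrow\Fix(G')$. Since a positive edge keeps its two endpoints in a single component of $G^+$, every edge joining two distinct components is negative in both $G$ and $G'$, so the all-or-nothing status of each component in a given $S$ can be used to define the image vertex by vertex. Fix a component $C$ and $S\in\Fix(G)$. If $C\subseteq S$, I keep every $M$-covered vertex of $C$ in the image and drop the uncovered ones (each uncovered vertex is adjacent in $G^+$ only to covered vertices, now joined to it by negative edges, hence it is \emph{forced} out in $G'$). If $C\cap S=\emptyset$, I keep the covered vertices of $C$ out and switch on each uncovered vertex of $C$ having no negative neighbour in $S$; the independence of the uncovered vertices guarantees these activations do not clash with one another. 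Away from the modified components the map is the identity. When no vertex outside $C$ relied on an uncovered vertex of $C$ to falsify the right-hand side of \eqref{eq:fixedset}, one checks that $\Phi(S)$ satisfies \eqref{eq:fixedset} everywhere, and that the on/off pattern of the $M$-edges together with the unchanged part of $S$ recovers $S$, so $\Phi$ is injective.

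The hard part, which I expect to be the genuine obstacle, is that this first attempt need not land in $\Fix(G')$. The dangerous configuration is $C\subseteq S$ together with external vertices that were kept out in $G$ \emph{only} because their unique negative neighbour in $S$ was an uncovered vertex of $C$; once those vertices are deleted from the image, \eqref{eq:fixedset} forces such externals to be switched on. These forced externals may themselves be mutually adjacent, so switching them on is a maximal-independent-set completion rather than a deterministic step, and it can propagate. The real content of the proof is therefore to replace $\Phi$ by a \emph{canonical repair}: after the naive assignment, resolve the vertices freed by the deletion through a fixed-priority completion, then show that the resulting set always satisfies \eqref{eq:fixedset} and that the completion can be inverted, so injectivity survives. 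I anticipate that proving this completion is always possible and consistent — in effect, that cutting a positive component down to the matching $M$ can only create, never destroy, fixed sets — will be the delicate heart of the argument, with the maximality of $M$ (hence the independence of its uncovered vertices) supplying exactly the room needed to carry the reactivation out without contradiction.
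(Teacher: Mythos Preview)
Your approach differs substantially from the paper's, and there is a concrete error beyond the gap you already acknowledge. In the $C\cap S=\emptyset$ branch you assert that ``the independence of the uncovered vertices guarantees these activations do not clash with one another''. Maximality of $M$ only makes the $M$-uncovered vertices independent in $G^+$; they may still be joined by a \emph{negative} edge of $G$, which stays negative in $G'$. Take $G$ with positive edges $ab,bc,de,ef$ and a single negative edge $cf$, and $M=\{ab,de\}$. Then $\emptyset\in\Fix(G)$; neither uncovered vertex $c$ nor $f$ has a negative neighbour in $S=\emptyset$, so your rule switches both on, yielding $\{c,f\}$, which violates \eqref{eq:fixedset} at $c$ because $cf$ is negative in $G'$. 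Hence the repair mechanism is already needed in this branch as well, and once a choice-dependent completion enters, your injectivity argument (read off the on/off pattern of the $M$-edges, identity elsewhere) no longer stands as written. The ``delicate heart'' is therefore broader than you indicate, and no invariant is offered that would force distinct $S$'s to distinct priority-completions after the cascades they trigger.

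The paper sidesteps all of this: it never builds an injection, and it handles one positive component at a time rather than fixing a global matching in advance. For a non-trivial component $C$ of $G^+$, Lemma~\ref{lemma:decompo4} gives $\fix(G)\le\fix(G-C)+\fix(G-\N_G(C))$. If $G'$ is obtained by making every edge of $G[C]$ negative except one positive edge $ab$, then $\{a,b\}$ is a non-trivial component of $(G')^+$ with no internal negative edge, so the same lemma gives $\fix(G')=\fix(G'-\{a,b\})+\fix(G'-\N_{G'}(\{a,b\}))$. Since $G-C$ is an induced subgraph of $G'-\{a,b\}$ and $G-\N_G(C)$ is an induced subgraph of $G'-\N_{G'}(\{a,b\})$, Lemma~\ref{lemma:decompo2} bounds each summand, whence $\fix(G)\le\fix(G')$. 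Iterating over the components reduces the positive part to a matching without any repair combinatorics.
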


\begin{lemma}\label{lemma:C_4}
Let $G$ be a simple signed graph whose set of positive edges is a matching. Let $G'$ be the simple signed graph obtained from $G$ by making negative a positive edge $ab$. If $G$ has an induced copy of $C_4$ containing $a$ and $b$ and no other vertices adjacent to a positive edge, then $\fix(G)\leq (3/2)\,\fix(G')$ and otherwise $\fix(G)\leq \fix(G')$. 
\end{lemma}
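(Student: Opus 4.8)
The plan is to use that $G$ and $G'$ differ in the single edge $ab$ only, so that the fixed-point equation \eqref{eq:fixedset} at every vertex of $W:=V\setminus\{a,b\}$ is identical for $G$ and $G'$; only the equations at $a$ and at $b$ change. Since $ab$ is a positive edge of $G$, every $S\in\Fix(G)$ satisfies $a\in S\Leftrightarrow b\in S$, so $S\cap\{a,b\}\in\{\emptyset,\{a,b\}\}$; since $ab$ is negative in $G'$, every $S\in\Fix(G')$ has $\{a,b\}\not\subseteq S$, so $S\cap\{a,b\}\in\{\emptyset,\{a\},\{b\}\}$. I would parametrise a fixed set by $T=S\cap W$: for a pattern $\alpha$ on $\{a,b\}$ let $\mathcal F(\alpha)$ be the family of $T\subseteq W$ for which $T$, together with the vertices selected by $\alpha$, satisfies \eqref{eq:fixedset} at every vertex of $W$ (this family is the same for $G$ and $G'$). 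Because $ab$ is the only positive edge at $a$ and at $b$, a direct reading of the two remaining equations gives the closed formulas \[\fix(G)=|\mathcal F(00)|+|\mathcal F(11)|,\qquad \fix(G')=|A_1|+|\mathcal F(10)|+|\mathcal F(01)|,\] where $A_1=\{T\in\mathcal F(00):N_{G^-}(a)\cap T\neq\emptyset\neq N_{G^-}(b)\cap T\}$, and where membership in $\mathcal F(11),\mathcal F(10),\mathcal F(01)$ already entails that the relevant intersections with $N_{G^-}(a)$ and/or $N_{G^-}(b)$ are empty (putting $a$ or $b$ into $S$ annihilates all its neighbours).

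Second, I would line up the families. Putting $a$ into $S$ only adds to the equation at $a$ the demand that $N_{G^-}(a)$ miss $S$, while forcing the blocked neighbours of $a$ out consistently; this yields the inclusions $\{T\in\mathcal F(00):N_{G^-}(a)\cap T=\emptyset\}\subseteq\mathcal F(10)$, its mirror image in $\mathcal F(01)$, and $A_{\emptyset\emptyset}:=\{T\in\mathcal F(00):N_{G^-}(a)\cap T=N_{G^-}(b)\cap T=\emptyset\}\subseteq\mathcal F(11)$. Splitting $\mathcal F(00)$ according to which of $N_{G^-}(a)\cap T,\ N_{G^-}(b)\cap T$ vanishes, and writing $D_a:=\mathcal F(10)\setminus\{T\in\mathcal F(00):N_{G^-}(a)\cap T=\emptyset\}$ for the type-$10$ sets in which $a$ is \emph{genuinely needed} to block a neighbour (and $D_b$ symmetrically), a one-line bookkeeping collapses the two formulas to the exact identity \[\fix(G)-\fix(G')=|\mathcal F(11)|-|A_{\emptyset\emptyset}|-|D_a|-|D_b|.\] Everything therefore reduces to bounding $|\mathcal F(11)|$ by $|A_{\emptyset\emptyset}|+|D_a|+|D_b|$ (general case) or by that plus $\tfrac12\fix(G')$ (special case).

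Third comes the core analysis of $\mathcal F(11)$. For $T\in\mathcal F(11)$ call $u\in W$ \emph{free} if $u\notin T$ but \eqref{eq:fixedset} in pattern $00$ would force $u$ in; as $\mathcal F(11)$ secures the $00$-equation away from $N_{G^-}(a)\cup N_{G^-}(b)$, every free vertex is a neighbour of $a$ or of $b$. A free common neighbour (in $N_{G^-}(a)\cap N_{G^-}(b)$) is re-blocked by keeping either endpoint and causes no trouble; a free \emph{private} neighbour of $a$ (in $N_{G^-}(a)\setminus N_{G^-}(b)$) forces $a$ to be retained, and one of $b$ forces $b$ to be retained. I would send each $T$ with no two-sided obstruction injectively into $A_{\emptyset\emptyset}\sqcup D_a\sqcup D_b$ ($T\mapsto T$, tagged by whether it has no free vertex, only an $a$-forcing one, or only a $b$-forcing one). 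The sets left unmatched are exactly those $T\in\mathcal F(11)$ owning simultaneously a private free neighbour $y$ of $a$ and a private free neighbour $x$ of $b$, for $G'$ forbids keeping both endpoints. The geometric fact to prove is that such an uncompensated $T$ produces the forbidden configuration: $a,b,x,y$ induce a $C_4$ (edges $ab,bx,xy,ya$, non-edges $ax,by$) whose only positively matched vertices are $a,b$, where the decisive point is that $x$ and $y$ must be \emph{adjacent} and unmatched.

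The main obstacle is the quantitative last step, separating the harmless case from the costly one. When the private free neighbours are never adjacent (e.g. a path $x\!-\!a\!-\!b\!-\!y$) the vertex that remains free in $G'$ instead enlarges $D_a$ or $D_b$ through the cascade triggered by deleting $b$ (resp. $a$), and this surplus exactly offsets the unmatched $T$, restoring $|\mathcal F(11)|\le|A_{\emptyset\emptyset}|+|D_a|+|D_b|$ and hence $\fix(G)\le\fix(G')$. When instead $x\sim y$, this cascade is short-circuited — the freed vertex is immediately re-blocked by its neighbour on the $C_4$, so no surplus is generated — and a genuine deficit appears; I would then bound it by charging each uncompensated $T$ to the two ``missing'' $G'$-fixed sets around the $C_4$, yielding $\fix(G)\le\tfrac32\fix(G')$, tight already for $G=C_4$ where $\fix=3$ while $\fix(G')=\mis(C_4)=2$. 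I expect the two delicate points to be (i) disentangling vertices adjacent to both $a$ and $b$ from the private-neighbour count, and (ii) turning the qualitative dichotomy into the exact factor $\tfrac32$, i.e. proving that the number of uncompensated members of $\mathcal F(11)$ never exceeds $\tfrac12\fix(G')$.
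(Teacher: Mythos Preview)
Your identity
\[
\fix(G)-\fix(G')=|\mathcal F(11)|-|A_{\emptyset\emptyset}|-|D_a|-|D_b|
\]
is correct and is a clean reformulation; the inclusions you state ($A_{\emptyset\emptyset}\subseteq\mathcal F(11)$, $\{T\in\mathcal F(00):N_{G^-}(a)\cap T=\emptyset\}\subseteq\mathcal F(10)$, etc.) all hold for the reasons you give, and free vertices are indeed unmatched. The injection $T\mapsto T$ from the non-uncompensated part of $\mathcal F(11)$ into $A_{\emptyset\emptyset}\sqcup D_a\sqcup D_b$ is also fine.

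The gap is everything after that. For an uncompensated $T$ (one with a private free $a$-neighbour $y$ and a private free $b$-neighbour $x$) you assert a dichotomy: either $x\sim y$ and you get the forbidden $C_4$, or $x\not\sim y$ and a ``cascade'' produces a surplus element in $D_a$ or $D_b$ that exactly offsets $T$. Neither branch is established. There is no reason \emph{a priori} why some pair $(x,y)$ should be adjacent; and in the non-adjacent case you give no map from uncompensated $T$'s to surplus elements of $D_a\cup D_b$, let alone an injective one. Already for the path $y\text{--}a\text{--}b\text{--}x$ the single uncompensated $T=\emptyset$ is not itself in $D_a$ or $D_b$: the offsetting elements are $\{x\}\in D_a$ and $\{y\}\in D_b$, which arise by \emph{modifying} $T$. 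In larger graphs one must specify how to modify $T$, prove the result lands in $D_a$ (or $D_b$), and prove different uncompensated $T$'s do not collide --- none of which your sketch addresses. The $\tfrac32$ bound is left at the same level of vagueness.

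The paper avoids this difficulty by a decomposition rather than a direct injection. It isolates the set $X$ consisting of $a,b$ together with those vertices of $N_G(a)\cup N_G(b)$ that are adjacent to \emph{only} negative edges, sets $U=V\setminus X$, and proves two things: first (Lemma~\ref{lemma:edge2}), that $\fix(G)-\fix(G,U)\le\fix(G')-\fix(G',U)$, which pushes the whole comparison into $\Fix(G,U)$ versus $\Fix(G',U)$; second (Lemma~\ref{lemma:edge1}), that for each $S\in\Fix(G[U])$ the residual graph $G_S:=G[X]-\N_G(S)$ is of the very special form where $ab$ is the \emph{only} positive edge and every vertex lies in $N(a)\cup N(b)$, and in that form one has $\fix(G_S)\le\fix(G'_S)+1$, with equality only when $G_S$ contains an induced $C_4$ through $ab$. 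Summing over $S$ gives $\fix(G)\le\fix(G')+|\Omega|$ where $\Omega$ is the set of such $S$, and a direct count shows $2|\Omega|\le\fix(G')$. The point is that the reduction to the local picture $G_S$ replaces your global cascade argument by a short case analysis on maximal independent sets.
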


\begin{proof}[Proof of Theorem \ref{thm:upper_bound}, assuming Lemmas~\ref{lemma:symmetric}, \ref{lemma:matching} and \ref{lemma:C_4}]
Let $G$ be a signed graph. The following three previous lemmas show that we can change the repartition of sign in $G$ in order to obtain a simple signed graph $G_0$ with the following properties: $\fix(G)\leq\fix(G_0)$; the set of positive edges of $G_0$ forms a matching, say $\{e_1,\dots,e_m\}$; and each positive edge $e_k$ belongs to an induced copy of $C_4$ in which the only vertices adjacent to a positive edge are the two vertices of $e_k$. Thus, by the definition of $m'(G)$, we have 
\begin{equation}\label{eq:mainproof1}
m\leq m'(G).
\end{equation}
Now, for $0\leq k<m$, let $G_{k+1}$ be the simple signed graph obtained from $G_k$ by making negative the positive edge $e_{k+1}$. By Lemma~\ref{lemma:C_4}, we have $\fix(G_k)\leq(3/2)\,\fix(G_{k+1})$. Thus 
\begin{equation}\label{eq:mainproof2}
\fix(G)\leq \fix(G_0)\leq(3/2)^m\,\fix(G_m).
\end{equation}
Furthermore, since $G_m$ has only negative edges, by Proposition~\ref{pro:mis}, we have $\fix(G_m)=\mis(G_m)=\mis(G)$, and with \eqref{eq:mainproof1} and \eqref{eq:mainproof2} we obtain the theorem. 
\end{proof}

\subsection{Proof of Lemma~\ref{lemma:symmetric} (symmetrization of signs)}\label{sec:symmetrization}

\begin{lemma}\label{lemma:cambiaArco}
Let $G$ be a signed graph, and let $v$ be a vertex of $G$. Let $A(v)$ be the set of arcs $uv$ such that $uv$ and $vu$ have different signs, and suppose that $A(v)$ contains at least one positive arc. Let $G'$ be the signed graph obtained from $G$ by changing the sign of each arc in $A(v)$. Then $\fix(G)\leq \fix(G')$.
\end{lemma}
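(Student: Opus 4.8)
The plan is to exploit the fact that the defining condition \eqref{eq:fixedset} for a fixed set, read at a vertex $u$, involves only the arcs \emph{entering} $u$. Since $G'$ is obtained from $G$ by modifying only arcs entering $v$ (those in $A(v)$), the condition \eqref{eq:fixedset} at every vertex $u\neq v$ is literally identical for $G$ and $G'$. Hence $\Fix(G)$ and $\Fix(G')$ can differ only through the condition at $v$, and the whole argument reduces to a local analysis at $v$. I would also record at the outset that, by the very definition of $A(v)$, flipping the sign of each of its arcs makes $v$ \emph{symmetric} in $G'$: for every neighbour $w$ of $v$, the arcs $vw$ and $wv$ carry the same sign in $G'$ (namely the sign of $vw$ in $G$, which is never touched).

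First I would show that $v$ lies in no fixed set of $G$; this is where the hypothesis is used. Pick a positive arc $u_0v\in A(v)$, so that $vu_0$ is negative. If some $S\in\Fix(G)$ contained $v$, then the condition at $v$ would force $N_{G^+}(v)\subseteq S$, hence $u_0\in S$; but the condition at $u_0$ would then force $N_{G^-}(u_0)\cap S=\emptyset$, hence $v\notin S$ (as $v\in N_{G^-}(u_0)$), a contradiction. Thus every $S\in\Fix(G)$ satisfies $v\notin S$, which will make the map below well defined.

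Next I would build an injection $g\colon\Fix(G)\to\Fix(G')$ by a case split on whether $S$ satisfies the condition \eqref{eq:fixedset} \emph{at $v$ relative to $G'$}. Set $g(S)=S$ when $N_{(G')^-}(v)\cap S\neq\emptyset$ or $N_{(G')^+}(v)\not\subseteq S$, and $g(S)=S\cup\{v\}$ otherwise. The first case is immediate: the conditions at all $u\neq v$ agree in $G$ and $G'$, and since $v\notin S$ and $S$ fails the $v$-condition of $G'$, equation \eqref{eq:fixedset} also holds at $v$, so $S\in\Fix(G')$. The map is visibly injective, since $S$ is recovered from $g(S)$ by deleting $v$, and the images of the two cases are separated by whether they contain $v$.

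The crux is the second case, and here the naive guess is misleading: $S\cup\{v\}$ need \emph{not} be a fixed set of $G'$ in general, because inserting $v$ can break \eqref{eq:fixedset} at a neighbour $w$ of $v$. The key claim that rescues the argument is that, \emph{precisely when $S$ satisfies the $v$-condition of $G'$}, every neighbour $w$ of $v$ obeys $w\in S\iff vw$ is positive. Indeed, using that $G'$ is symmetric at $v$: if $w\in S$ then $N_{(G')^-}(v)\cap S=\emptyset$ forces $wv$, hence $vw$, positive; and if $w\notin S$ then $N_{(G')^+}(v)\subseteq S$ forces $wv$, hence $vw$, negative. Granting this dichotomy, adding $v$ preserves \eqref{eq:fixedset} at every neighbour $w$ with $vw$ positive (where $v$ was already a required positive in-neighbour, the other positive in-neighbours of $w$ already lying in $S$), and it definitively breaks \eqref{eq:fixedset} at every neighbour $w$ with $vw$ negative (which are excluded from $S\cup\{v\}$, as required since $w\notin S$); at all non-neighbours of $v$ nothing changes, and the condition at $v$ now holds by hypothesis. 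Therefore $S\cup\{v\}\in\Fix(G')$. I expect this neighbour-sign dichotomy, together with the realization that the correct map must be conditioned on the $v$-condition of $G'$ rather than blindly adjoining $v$, to be the main obstacle of the proof.
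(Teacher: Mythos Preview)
Your proof is correct and defines exactly the same injection as the paper---the paper sets $x'_v=f'_v(x)$ and $x'_u=x_u$ for $u\neq v$, which is precisely your case split on whether $S$ satisfies the $G'$-condition at $v$---only you verify $g(S)\in\Fix(G')$ directly via the neighbour-sign dichotomy while the paper argues by contradiction. One small remark: your sub-case ``$vw$ positive'' is in fact vacuous, since such a $w$ would lie in $S$ by your dichotomy and then the $G$-condition at $w$ (namely $N_{G^+}(w)\subseteq S$, with $v\in N_{G^+}(w)$) would force $v\in S$, contradicting $v\notin S$.
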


\begin{figure}[!htb]
\centering
\def\invo{\textcolor{white}{o}}
\begin{tikzpicture}
\draw (-4\unit,0) circle (2.4\unit);
\node[texto,above] (l) at (-4\unit,2.6\unit) {$G$};
\node[inode,texto] (v) at (-4\unit,0){$v$};
\node[inode,shift={(45:2\unit)},texto] (1) at (v) {\invo};
\node[inode,shift={(135:2\unit)},texto] (2) at (v) {\invo};
\node[inode,shift={(225:2\unit)},texto] (3) at (v) {\invo};
\node[inode,shift={(-45:2\unit)},texto] (4) at (v) {\invo};
\path[bend right=15]
(v) edge[positive] (1)
(1) edge[positive] (v)
(v) edge[negative] (2)
(2) edge[negative] (v)
(v) edge[positive] (3)
(3) edge[negative] (v)
(v) edge[negative] (4)
(4) edge[positive] (v)
;
\node[texto,shift={(270:3.4\unit)}] (a) at (v) {$A(v)$};
\draw[-to,dotted] (a.110) to +(100:2.2\unit);
\draw[-to,dotted] (a.70) to  +(78:2.6\unit);
\flecha{(-\unit,0)}{2\unit}
\draw (4\unit,0) circle (2.4\unit);
\node[texto,above] (l) at (4\unit,2.6\unit) {$G'$};
\node[inode,texto] (v) at (4\unit,0){$v$};
\node[inode,shift={(45:2\unit)},texto] (1) at (v) {\invo};
\node[inode,shift={(135:2\unit)},texto] (2) at (v) {\invo};
\node[inode,shift={(225:2\unit)},texto] (3) at (v) {\invo};
\node[inode,shift={(-45:2\unit)},texto] (4) at (v) {\invo};
\path[bend right=15]
(v) edge[positive] (1)
(1) edge[positive] (v)
(v) edge[negative] (2)
(2) edge[negative] (v)
(v) edge[positive] (3)
(3) edge[positive] (v)
(v) edge[negative] (4)
(4) edge[negative] (v)
;
\end{tikzpicture}
\caption{Illustration of the transformation described in Lemma~\ref{lemma:cambiaArco}.}\label{fig:symmetric}
\end{figure}
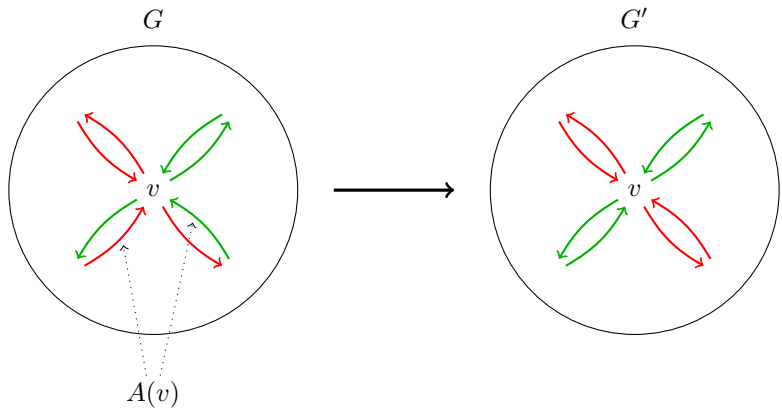

\begin{proof}
Let $f$ and $f'$ be the conjunctive networks on $G$ and $G'$ respectively. For each $x\in\Fix(G)$, let $x'$ be defined by 
\[
x'_v=f'_v(x)\quad\text{and}\quad x'_u=x_u~\text{for all } u\neq v.
\]
We will prove that $x\mapsto x'$ is an injection from $\Fix(G)$ to $\Fix(G')$. To prove the injectivity, it is sufficient to prove that 
\[
\forall x\in\Fix(G),\qquad x_v=0. 
\]
Let $x\in\Fix(G)$ and let $uv$ be a positive arc of $A(v)$. If $x_v=1=f_v(x)$ then $x_u=1$ and since $vu$ is negative, we have $f_u(x)=0\neq x_u$, a contradiction. 

Now, let us prove that $x'\in\Fix(G')$. Suppose, by contradiction, that $f'(x')\neq x'$. Since $G'$ has no loop on $v$ we have $x'_v=f'_v(x)=f'_v(x')$ thus there exists $u\neq v$ such that 
\[
f_u(x')=f'_u(x')\neq x'_u=x_u=f_u(x).
\]
Thus $x\neq x'$. It means that $x_v=0$ and $x'_v=1=f'_v(x)$, and that $vu$ is an arc of $G$. Suppose first that $vu$ is positive. Since $x_v=0$ we have $f_u(x)=0$ thus $x_u=0$, and since $f'_v(x)=1$ we deduce that $uv$ is a negative arc of $G'$. So $uv$ and $vu$ have different signs in $G'$, a contradiction. Suppose now that $vu$ is negative. Since $x'_v=1$ we have $f_u(x')=0$ thus $x_u=1$, and since $f'_v(x)=1$ we deduce that $uv$ is a positive arc of $G'$. Thus $uv$ and $vu$ have different signs in $G'$, a contradiction. So $x'\in\Fix(G')$.  
\end{proof}

\begin{proof}[Proof of Lemma~\ref{lemma:symmetric}]
Suppose that $G$ is a signed graph which is not simple. Then it contains at least one asymmetry of signs, that is, an arc $uv$ such that $uv$ and $vu$ have not the same sign. If $uv$ is positive then, by changing the sign of the arcs in $A(v)$, we obtained a signed graph $G'$ with strictly less asymmetries of signs and, by the previous lemma, $\fix(G)\leq\fix(G')$. Otherwise, $vu$ is positive and thus, by changing the sign of the arcs in $A(u)$, we also strictly decrease the number of asymmetries of signs without decreasing the number of fixed points. We then obtain the lemma by applying this transformation until that there is no asymmetry of signs.
\end{proof}

\subsection{Proof of Lemma~\ref{lemma:matching} (reduction of positive components)}\label{sec:reduction}

Let $G$ be a simple signed graph with vertex set $V$. In this section, we prove that we can change the repartition of signs, without decreasing the number of fixed sets, until to obtain a simple signed graph $G'$ in which positive edges form a matching. 

For that we will introduced decomposition properties, used throughout the paper. Actually, if $U\subseteq V$ is such that $G$ has no positive edge between $U$ and $V-U$, then a lot of things on $\Fix(G)$ can be understood from $\fix(G[U])$ and the fixed sets of the induced subgraphs of $G-U$. In particular, under the condition that there is no positive edge between $U$ and $V-U$, we will prove that if $S\in\Fix(G[U])$ and $S'\in\Fix(G[U'])$, where $U'$ is some subset of $V-U$ that only depends on $U$ and $S$, then $S\cup S'\in\Fix(G)$. This very useful decomposition property, essential for the rest of the proof, is based on the following definitions.   

For all $U\subseteq V$, we denote by $\N_G(U)$ the union of $N_G(U)$ and the connected components $C$ of $G^+$ such that $N_G(U)\cap C\neq\emptyset$. In other words, $\N_G(U)$ is the union of $N_G(U)$ and the set of vertices reachable from $N_G(U)$ with a path that contains only positive edges. Note that for all $U\subseteq V$, $G$ has no positive edges between $\N_G(U)$ and $V-\N_G(U)$. We can see an illustration of $N_G(U)$ and $\N_G(U)$ in Figure~\ref{fig:NGU}. 

\begin{figure}[!htb]
 \centering
\begin{tikzpicture}[scale=0.6]
\foreach \x/\y in {1/5,2/275,3/185,4/95}
 \coordinate (\x) at (\y: 1.5 and 1);
\foreach \x/\y in {5/60,6/30,7/0,8/330,9/300,10/270,11/240,12/210,13/180,14/150,15/120,16/90}
 \coordinate (\x) at (\y: 3 and 2);
 \foreach \x/\y in {17/70,18/50,19/30,20/12,21/352,22/332,23/312,24/292,25/270,26/250,27/230,28/210,29/190,30/170,31/150,32/130,33/110,34/90}
 \coordinate (\x) at (\y: 4 and 3);
 \foreach \x/\y in {35/73,36/60,37/47,38/34,39/21,40/4,41/347,42/330,43/316,44/305,45/292,46/283,47/274,48/263,49/250,50/236,51/223,52/210,53/197,54/183,55/170,56/157,57/143,58/130,59/117,60/103,61/90}
 \coordinate (\x) at (\y: 5 and 4);
 \foreach \x/\y in{62/72,63/60,64/48,65/37,66/25,67/14,68/2,69/350,70/339,71/327,72/318,73/306,74/295,75/283,76/272,77/260,78/249,79/237,80/225,81/214,82/202,83/191,84/179,85/167,86/156,87/144,88/133,89/121,90/110,91/98,92/86}
 \coordinate (\x) at (\y: 6 and 5);
\foreach \x in{62,35,17,18,19,20,40,67,68,69,42,23,9,25,46,47,76,48,26,11,12,13,14,15,32,33,16,61,92}
  \node[circle,minimum size=3\sombra] (sg\x) at (\x) {};
  
\filldraw[fill=black!15,rounded corners,line width=0.7](sg62.0) to (sg35.0) to (sg17.45) to (sg18.50) to (sg19.30) to (sg20.45) to (sg40.135) to (sg67.90) to (sg67.0) to (sg68.30) to (sg69.0) to (sg69.225) to (sg42.290) to (sg23.330) to (sg9.270) to  (sg46.90) to (sg46.0) to (sg46.270) to (sg76.0) to (sg76.270) to (sg76.180) to (sg47.225) to (sg48.225) to (sg26.180) to (sg11.240) to (sg12.210) to (sg13.180) to (sg14.190) to (sg32.225) to (sg32.145) to (sg33.90) to (sg16.90) to (sg17.180) to (sg35.225) to (sg61.270) to (sg61.180) to (sg92.135) to (sg62.90) --cycle;

\foreach \x in {11,12,13,4,15,16,17,18,19,20,21,8,2}
  \node[circle,minimum size=2\sombra] (sm\x) at (\x) {};
\filldraw[fill=black!35,rounded corners,line width=0.7](sm17.180) to (sm17.70) to (sm18.50) to (sm19.30) to (sm20.12) to (sm21.350) to (sm8.310) to (sm2.275) to (sm11.240) to (sm12.210) to (sm13.180) to (sm13.90) to (sm4.180) to (sm15.225) to  (sm15.120) to (sm16.90)  --cycle;

\foreach \x in {2,3,4,5,6,7}
  \node[circle,minimum size=\sombra,line width=0.7] (sp\x) at (\x) {};
\filldraw[fill=black!55,rounded corners](sp2.275) to (sp3.185) to (sp4.95) to (sp5.60) to (sp6.30) to (sp7.0)--cycle;

\draw (0,0) ellipse (7 and 6);

\foreach \x in {1,2,...,92}
  \node[vnode] (n\x) at (\x){};
 \path[dpos]
 (n1) edge (n4)
 (n1) edge (n7)
 (n2) edge (n10)
 (n3) edge (n4)
 (n4) edge (n5)
 (n4) edge (n13) 
 (n5) edge (n6) 
 (n5) edge (n17) 
 (n5) edge (n18) 
 (n6) edge (n7)
 (n9) edge (n10)
 (n10) edge (n26)
 (n13) edge (n14)
 (n15) edge (n32)
 (n15) edge (n33)
 (n17) edge (n18)
 (n17) edge (n35)
 (n21) edge (n22)
 (n21) edge (n40)
 (n22) edge (n23)
 (n22) edge (n41)
 (n25) edge (n26)
 (n25) edge (n47)
 (n25) edge (n48)
 (n26) edge (n48)
 (n27) edge (n28)
 (n27) edge (n50)
 (n29) edge (n30)
 (n29) edge (n52)
 (n29) edge (n53)
 (n30) edge (n54)
 (n32) edge (n33)
 (n34) edge (n60)
 (n35) edge (n61)
 (n35) edge (n62)
 (n35) edge (n92)
 (n37) edge (n38)
 (n37) edge (n64)
 (n37) edge (n65)
 (n38) edge (n64)
 (n38) edge (n65)
 (n38) edge (n66)
 (n40) edge (n41)
 (n40) edge (n67)
 (n40) edge (n68)
 (n41) edge (n42)
 (n42) edge (n69)
 (n43) edge (n44)
 (n45) edge (n73)
 (n45) edge (n74)
 (n46) edge (n47)
 (n47) edge (n76)
 (n49) edge (n78)
 (n49) edge (n79)
 (n49) edge (n80)
 (n50) edge (n51)
 (n52) edge (n82)
 (n54) edge (n84)
 (n56) edge (n57)
 (n56) edge (n86)
 (n57) edge (n58)
 (n57) edge (n87)
 (n57) edge (n88)
 (n58) edge (n89)
 (n61) edge (n92)
 (n62) edge (n92)
 (n64) edge (n65)
 (n65) edge (n66)
 (n70) edge (n71)
 (n73) edge (n74)
 (n78) edge (n79)
 (n79) edge (n80)
 (n81) edge (n82)
 (n86) edge (n87)
 (n87) edge (n88)
 (n88) edge (n89)
 (n90) edge (n91)
 ;
 \path[dneg]
 (n1) edge (n2)
 (n1) edge (n3)
 (n1) edge (n6)
 (n2) edge (n3)
 (n2) edge (n7)
 (n2) edge (n8)
 (n2) edge (n11)
 (n3) edge (n5)
 (n3) edge (n12)
 (n3) edge (n13)
 (n4) edge (n15)
 (n4) edge (n16)
 (n5) edge (n16) 
 (n6) edge (n19) 
 (n7) edge (n8)
 (n7) edge (n19)
 (n7) edge (n20)
 (n7) edge (n21)
 (n8) edge (n9)
 (n9) edge (n24)
 (n11) edge (n27)
 (n14) edge (n30)
 (n16) edge (n33)
 (n16) edge (n34)
 (n18) edge (n36)
 (n19) edge (n37)
 (n20) edge (n39)
 (n22) edge (n24)
 (n23) edge (n24)
 (n23) edge (n42)
 (n24) edge (n25)
 (n24) edge (n43)
 (n24) edge (n44)
 (n24) edge (n45)
 (n25) edge (n46)
 (n26) edge (n49)
 (n27) edge (n49)
 (n28) edge (n29)
 (n30) edge (n31)
 (n31) edge (n32)
 (n31) edge (n56)
 (n33) edge (n59)
 (n36) edge (n37)
 (n36) edge (n63)
 (n39) edge (n66)
 (n39) edge (n67)
 (n41) edge (n69)
 (n42) edge (n43)
 (n42) edge (n70)
 (n42) edge (n71)
 (n43) edge (n71)
 (n43) edge (n72)
 (n44) edge (n45)
 (n44) edge (n72)
 (n45) edge (n46)
 (n46) edge (n75)
 (n47) edge (n77)
 (n48) edge (n49)
 (n48) edge (n77)
 (n50) edge (n81)
 (n51) edge (n52)
 (n51) edge (n81)
 (n52) edge (n83)
 (n53) edge (n54)
 (n53) edge (n83)
 (n55) edge (n56)
 (n55) edge (n85)
 (n57) edge (n86)
 (n58) edge (n59)
 (n58) edge (n88)
 (n60) edge (n61)
 (n60) edge (n90)
 (n61) edge (n91)
 (n61) edge (n62)
 (n66) edge (n67)
 (n68) edge (n69)
 (n69) edge (n70)
 (n71) edge (n72)
 (n72) edge (n73)
 (n74) edge (n75)
 (n76) edge (n77)
 (n80) edge (n81)
 (n82) edge (n83)
 (n85) edge (n86)
 (n89) edge (n90)
 (n91) edge (n92)
;

\node[texto,inner sep=2pt](l) at (20:8.5) {$\N_G(U)$};
\draw[-to,line width=1] (l.210) -- +(-1.3,-1.7);
\node[texto,inner sep=2pt](l) at (37:8.2) {$N_G(U)$};
\draw[-to,line width=1] (l.230) -- +(-2.5,-3.5);
\node[texto,inner sep=2pt](l) at (65:7.7) {$U$};
\draw[-to,line width=1] (l.240) -- +(-1.5,-5.9);

\end{tikzpicture}
\caption{Illustration of $\N_G(U)$.}\label{fig:NGU}
\end{figure}
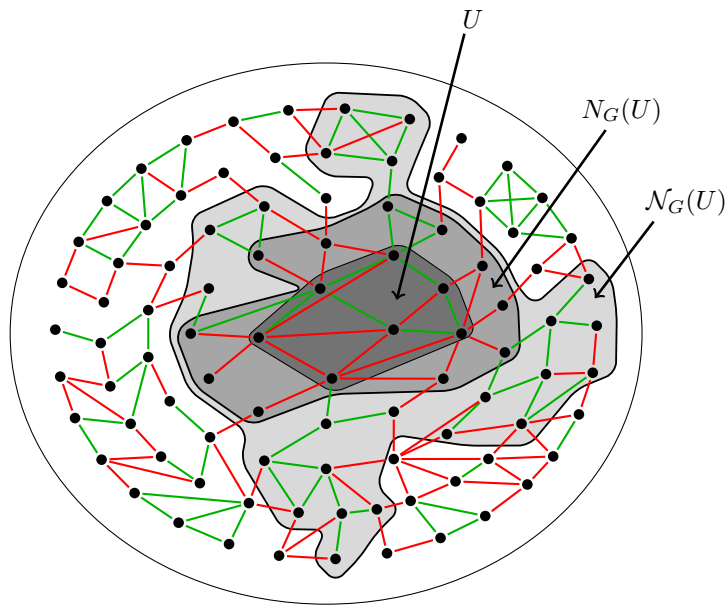

We are now in position to introduce our decomposition tool. For all $U\subseteq V$ we set
\[
\Fix(G,U)=\{S\cup S'\,|\,S\in\Fix(G[U]),~S'\in\Fix(G-U-\N_G(S))\}
\]
and $\fix(G,U)=|\Fix(G,U)|$. 

\begin{example}
Here is an example where $G[U]$ contains 2 fixed sets, $S_1$ and $S_2$: 

\begin{center}
\begin{tikzpicture}
 \node[inode] (1) at (-0.5\unit,-0.5\unit) {};
 \node[inode] (2) at (0.5\unit,-0.5\unit) {};
 \node[inode] (3) at (-0.5\unit,0.5\unit) {};
 \node[inode] (4) at (0.5\unit,0.5\unit) {};
 \node[inode] (5) at (-0.5\unit,1.5\unit) {};
 \node[inode] (6) at (0.5\unit,1.5\unit) {};
 \node[inode] (7) at (-0.5\unit,2.5\unit) {};
 \node[inode] (8) at (0.5\unit,2.5\unit) {};
 \path[dneg]
 (1) edge[dpos] (2)
 (1) edge (3)
 (2) edge[dpos] (4)
 (3) edge (4)
 (3) edge (5)
 (4) edge (6)
 (5) edge (6)
 (5) edge[dpos] (7)
 (6) edge (8)
 (7) edge (8)
 ;
 \draw[cuadros] (225:1.25\unit) rectangle +(45:2.5\unit);
 \node[texto] (U) at (1.1\unit,0) {$U$};
 \node[texto] (G) at (0,2.75\unit) {$G$};
 \node[right,texto] (l) at (3\unit,0) {$\Fix(G[U])=\Fix\big($};

 \node[inode,minimum size=1pt,shift={(0,-0.5\punit)}] (1) at (l.0) {};
 \node[inode,minimum size=1pt,shift={(\punit,-0.5\punit)}] (2) at (l.0)  {};
 \node[inode,minimum size=1pt,shift={(0,0.5\punit)}] (3) at (l.0)  {};
 \node[inode,minimum size=1pt,shift={(\punit,0.5\punit)}] (4) at (l.0)  {};
 \path
 (1) edge[dpos] (2)
 (3) edge[dneg] (4)
 (1) edge[dneg] (3)
 (2) edge[dpos] (4)
 ;
 \node[texto,right,shift={(0,0.5\punit)}] (l) at (2) {$\big)=\big\{$};
 \coordinate[shift={(0,-0.5\punit)}] (c1) at (l.0)  {};
 \coordinate[shift={(\punit,-0.5\punit)}] (c2)  at (l.0) {};
 \coordinate[shift={(0,0.5\punit)}] (c3)  at (l.0) {};
 \coordinate[shift={(\punit,0.5\punit)}] (c4)  at (l.0) {};
 \node[inode,minimum size=1pt] (1) at (c1) {};
 \node[inode,minimum size=1pt] (2) at (c2) {};
 \node[inode,minimum size=1pt] (3) at (c3) {};
 \node[inode,minimum size=1pt] (4) at (c4) {};
 \filldraw[fill=black!20] (c3) circle (\ssombra);
 \path
 (1) edge[dpos] (2)
 (3) edge[dneg] (4)
 (1) edge[dneg] (3)
 (2) edge[dpos] (4)
 ;
 \node[below right,texto,yshift=-\sep] (ll) at (1) {$S_1$};
 \node[texto,shift={(\sep,0)},right] (l3) at (2) {$,$};
\coordinate[shift={(\sep,0)}] (c1) at (l3.0) {};
 \coordinate[shift={(\punit,0)}] (c2) at (c1) {};
 \coordinate[shift={(0,\punit)}] (c3) at (c1) {};
 \coordinate[shift={(\punit,\punit)}] (c4) at (c1) {};
 \node[inode,minimum size=1pt] (1) at (c1) {};
 \node[inode,minimum size=1pt] (2) at (c2) {};
 \node[inode,minimum size=1pt] (3) at (c3) {};
 \node[inode,minimum size=1pt] (4) at (c4) {};
   \filldraw[fill=black!20] (c2)++(-\ssombra,\ssombra)--++(-\punit+\ssombra,0) arc (90:270:\ssombra)--++(\punit,0) arc (270:360:\ssombra)-- ++(0,\punit)  arc (0:180:\ssombra){[rounded corners]--cycle};
 \path
 (1) edge[dpos] (2)
 (3) edge[dneg] (4)
 (1) edge[dneg] (3)
 (2) edge[dpos] (4)
 ;
 \node[below right,texto,yshift=-\sep] (ll) at (1) {$S_2$};
 \node[texto,right,shift={(\sep,0.5\punit)}] (l) at (c2) {$\big\}$};
\end{tikzpicture}
\end{center}

The induced subgraph $G-U-\N_G(S_1)$ has two fixed sets, $S_{11}$ and $S_{12}$: 

\begin{center}
\begin{tikzpicture}
 \coordinate (c1) at (-0.5\unit,-0.5\unit) ;
 \coordinate (c2) at (0.5\unit,-0.5\unit) ;
 \coordinate (c3) at (-0.5\unit,0.5\unit) ;
 \coordinate (c4) at (0.5\unit,0.5\unit) ;
 \coordinate (c5) at (-0.5\unit,1.5\unit) ;
 \coordinate (c6) at (0.5\unit,1.5\unit) ;
 \coordinate (c7) at (-0.5\unit,2.5\unit) ;
 \coordinate (c8) at (0.5\unit,2.5\unit) ;
 \filldraw[fill=black!20] (c7)++(0:\sombra) arc (0:180:\sombra)--++(0,-3\unit) arc (180:270:\sombra) --++(\unit,0) arc(270:360:\sombra)--++(0,\unit) arc (0:90:\sombra)--++(-\unit+2\sombra,0) arc (270:180:\sombra)--cycle;
 \node[shift={(0,\sombra)},above,texto] (l) at (c7) {$\N_G(S_1)$};
 
 \filldraw[fill=black!45] (c3) circle (0.7\sombra);
 \node[below right,texto] (l) at (c3) {$S_1$};
 
 \draw (c8)++(0:\sombra) arc (0:180:\sombra)--++(0,-\unit) arc (180:360:\sombra)--cycle;
 \node[texto,shift={(\sombra,-0.5\unit)},right] (l) at (c8) {$G-U-\N_G(S_1)$};
 \foreach \x in{1,2,...,8}
  \node[inode] (\x) at (c\x) {};
 \path[dneg]
 (1) edge[dpos] (2)
 (1) edge (3)
 (2) edge[dpos] (4)
 (3) edge (4)
 (3) edge (5)
 (4) edge (6)
 (5) edge (6)
 (5) edge[dpos] (7)
 (6) edge (8)
 (7) edge (8)
 ;
 \draw[cuadros] (225:1.25\unit) rectangle +(45:2.5\unit);
 \node[texto] (U) at (1.1\unit,0) {$U$};
 \node[texto,right] (l) at (3\unit,0) {$\Fix(G-U-\N_G(S_1))=\Fix\big($};

 \node[inode,minimum size=1pt,shift={(0,-0.5\punit)}] (6) at (l.0) {};
 \node[inode,minimum size=1pt,shift={(0,0.5\punit)}] (8) at (l.0) {};
 \path (6) edge[dneg](8);
 \node[texto,right,shift={(0,0.5\punit)}] (l) at (6) {$\big)=\big\{$};
  \node[inode,minimum size=1pt,shift={(\sep,-0.5\punit)}] (6) at (l.0) {};
 \node[inode,minimum size=1pt,shift={(0,\punit)}] (8) at (6) {};
 \filldraw[fill=black!45] (8) circle (\ssombra);
 \path (6) edge[dneg](8);
 \node[texto,below,yshift=-\sep] (ll) at (6) {$S_{11}$};
 \node[texto,right,shift={(\sep,0)},right] (l) at (6) {$,$};
  \node[inode,minimum size=1pt,shift={(2\sep,0)}] (6) at (l.0) {};
 \node[inode,minimum size=1pt,shift={(0,\punit)}] (8) at (6) {};
 \filldraw[fill=black!45] (6) circle (\ssombra);
 \path (6) edge[dneg](8);
 \node[texto,below,yshift=-\sep] (ll) at (6) {$S_{12}$};
 \node[texto,right,shift={(\sep,0.5\punit)}] (l) at (6) {$\big\}$};
\end{tikzpicture}
\end{center}

The induced subgraph $G-U-\N_G(S_2)$ has also two fixed sets, $S_{21}$ and $S_{22}$: 

\begin{center}
\begin{tikzpicture}
 \coordinate (c1) at (-0.5\unit,-0.5\unit) ;
 \coordinate (c2) at (0.5\unit,-0.5\unit) ;
 \coordinate (c3) at (-0.5\unit,0.5\unit) ;
 \coordinate (c4) at (0.5\unit,0.5\unit) ;
 \coordinate (c5) at (-0.5\unit,1.5\unit) ;
 \coordinate (c6) at (0.5\unit,1.5\unit) ;
 \coordinate (c7) at (-0.5\unit,2.5\unit) ;
 \coordinate (c8) at (0.5\unit,2.5\unit) ;
 \filldraw[fill=black!20] (c6)++(0:\sombra) arc (0:180:\sombra) --++(0,-\unit+2\sombra) arc(360:270:\sombra)--++(-\unit+2\sombra,0) arc (90:180:\sombra) --++(0,-\unit) arc (180:270:\sombra) --++(\unit,0) arc (270:360:\sombra)--cycle;
 \node[texto,right] (l) at (0.5\unit+\sombra,1.25\unit) {$\N_G(S_2)$};
 
 \filldraw[fill=black!45] (c4)++(0:0.7\sombra) arc (0:180:0.7\sombra) --++(0,-\unit+1.4\sombra) arc(360:270:0.7\sombra)--++(-\unit+1.4\sombra,0) arc (90:270:0.7\sombra)--++(\unit,0) arc(270:360:0.7\sombra)--cycle;
 \node[texto,shift={(-0.5\unit,0.5\unit)}] (l) at (c2) {$S_2$};
 
 \draw (c5)++(180:\sombra) arc (180:360:\sombra) --++(0,\unit-2\sombra) arc(180:90:\sombra)--++(\unit-2\sombra,0) arc (-90:90:\sombra)--++(-\unit,0) arc(90:180:\sombra)--cycle;
 \node[texto,right,shift={(\sombra,0)}] (l) at (c8) {$G-U-\N_G(S_2)$};
 \foreach \x in{1,2,...,8}
  \node[inode] (\x) at (c\x) {};
 \path[dneg]
 (1) edge[dpos] (2)
 (1) edge (3)
 (2) edge[dpos] (4)
 (3) edge (4)
 (3) edge (5)
 (4) edge (6)
 (5) edge (6)
 (5) edge[dpos] (7)
 (6) edge (8)
 (7) edge (8)
 ;
 \draw[cuadros] (225:1.25\unit) rectangle +(45:2.5\unit);
 \node[texto] (U) at (1.1\unit,0) {$U$};
 \node[texto,right] (l) at (3\unit,0) {$\Fix(G-U-\N_G(S_2))=\Fix\big($};

 \node[inode,minimum size=1pt,shift={(0,-0.5\punit)}] (5) at (l.0) {};
 \node[inode,minimum size=1pt,shift={(0,0.5\punit)}] (7) at (l.0) {};
 \node[inode,minimum size=1pt,shift={(\punit,0.5\punit)}] (8) at (l.0) {};
 \path (5) edge[dpos] (7) (7) edge[dneg] (8);

  \node[texto,right,shift={(0,-0.5\punit)}] (l) at (8) {$\big)=\big\{$};
  \node[inode,minimum size=1pt,shift={(\sep,-0.5\punit)}] (5) at (l.0)  {};
 \node[inode,minimum size=1pt,shift={(0,\punit)}] (7) at (5) {};
 \node[inode,minimum size=1pt,shift={(\punit,0)}] (8) at (7) {};
 \filldraw[fill=black!20] (7)++(0:\ssombra) arc (0:180:\ssombra)--++(0,-\punit) arc (180:360:\ssombra)--cycle;
 \node[texto,below,shift={(0.5\punit,-\sep)}] (ll) at (5) {$S_{21}$};
\path (5) edge[dpos] (7) (7) edge[dneg] (8);
 \node[texto,right,shift={(\sep,-\punit)},right] (l) at (8) {$,$};
  \node[inode,minimum size=1pt,shift={(\sep,0)}] (5) at (l.0) {};
 \node[inode,minimum size=1pt,shift={(0,\punit)}] (7) at (5) {};
 \node[inode,minimum size=1pt,shift={(\punit,0)}] (8) at (7) {};
 \filldraw[fill=black!20] (8) circle (\ssombra);
 \node[texto,below,shift={(0.5\punit,-\sep)}] (ll) at (5) {$S_{22}$};
 \path (5) edge[dpos] (7) (7) edge[dneg] (8);
\node[texto,right,shift={(\sep,-0.5\punit)}] (l) at (8) {$\big\}$};

\end{tikzpicture}
\end{center}

Thus $\Fix(G,U)$ contains the following 4 sets:

\begin{center}
\begin{tikzpicture}[node distance=\munit]
 \coordinate (c1) at (-0.5\munit,-0.5\munit) ;
 \coordinate[right of = c1] (c2);
 \coordinate[above of = c1] (c3);
 \coordinate[above of = c2] (c4);
 \coordinate[above of = c3] (c5);
 \coordinate[above of = c4] (c6);
 \coordinate[above of = c5] (c7);
 \coordinate[above of = c6] (c8);
 \filldraw[claro] (c3) circle (\ssombra);
 \filldraw[claro] (c8) circle (\ssombra);
 \filldraw[claro] (c8)++(6\munit,0) circle (\ssombra);
 \filldraw[claro] (c3)++(2\munit,0) circle (\ssombra);
 \filldraw[claro] (c6)++(2\munit,0) circle (\ssombra);
 \filldraw[claro] (c4)++(4\munit+0.7\ssombra,0) arc (0:180:0.7\ssombra) --++(0,-\munit+1.4\ssombra) arc(360:270:0.7\ssombra)--++(-\munit+1.4\ssombra,0) arc (90:270:0.7\ssombra)--++(\munit,0) arc(270:360:0.7\ssombra)--cycle;
 \filldraw[claro] (c4)++(6\munit+0.7\ssombra,0) arc (0:180:0.7\ssombra) --++(0,-\munit+1.4\ssombra) arc(360:270:0.7\ssombra)--++(-\munit+1.4\ssombra,0) arc (90:270:0.7\ssombra)--++(\munit,0) arc(270:360:0.7\ssombra)--cycle;
 \filldraw[claro] (c7)++(4\munit+0.7\ssombra,0) arc (0:180:0.7\ssombra)--++(0,-\munit) arc (180:360:0.7\ssombra)--cycle;
\foreach \de in{0,1,2,3}{
    \foreach \x in{1,2,...,8}
      \node[inode,shift={(2*\de\munit,0)}] (\x) at (c\x) {};
    \path[dneg]
      (1) edge[dpos] (2)
      (1) edge (3)
      (2) edge[dpos] (4)
      (3) edge (4)
      (3) edge (5)
      (4) edge (6)
      (5) edge (6)
      (5) edge[dpos] (7)
      (6) edge (8)
      (7) edge (8)
    ;
 }
 \foreach \de in {0,1,2}
  \node(c) at (2*\de\munit+\munit,\munit) {$,$};
  \llaveI{(-\munit,-0.5\munit)}{($(-\munit,-0.75\munit)+(0,3.25\munit)$)}
  \llaveD{(7\munit,-0.5\munit)}{($(7\munit,-0.75\munit)+(0,3.25\munit)$)}
  \node[texto,left](c) at (-1.5\munit,\munit) {$\Fix(G,U)=$};
  \node[texto,below,shift={(-7\munit,-2pt)}](l) at (c1) {\tiny$S_1\!\cup\! S_{11}$};
  \node[texto,shift={(2\munit,0)}](l) at (l) {\tiny$S_1\!\cup\! S_{12}$};
  \node[texto,shift={(2\munit,0)}](l) at (l) {\tiny$S_2\!\cup\! S_{21}$};
  \node[texto,shift={(2\munit,0)}](l) at (l) {\tiny$S_2\!\cup\! S_{22}$};
\end{tikzpicture}
\end{center}

These 4 sets are all fixed sets, since

\begin{center}
\begin{tikzpicture}[node distance=\munit]
 \coordinate (c1) at (-0.5\munit,-0.5\munit) ;
 \coordinate[right of = c1] (c2);
 \coordinate[above of = c1] (c3);
 \coordinate[above of = c2] (c4);
 \coordinate[above of = c3] (c5);
 \coordinate[above of = c4] (c6);
 \coordinate[above of = c5] (c7);
 \coordinate[above of = c6] (c8);
 \filldraw[claro] (c3) circle (\ssombra);
 \filldraw[claro] (c8) circle (\ssombra);
 \filldraw[claro] (c8)++(6\munit,0) circle (\ssombra);
 \filldraw[claro] (c3)++(2\munit,0) circle (\ssombra);
 \filldraw[claro] (c6)++(2\munit,0) circle (\ssombra);
 \filldraw[claro] (c4)++(4\munit+0.7\ssombra,0) arc (0:180:0.7\ssombra) --++(0,-\munit+1.4\ssombra) arc(360:270:0.7\ssombra)--++(-\munit+1.4\ssombra,0) arc (90:270:0.7\ssombra)--++(\munit,0) arc(270:360:0.7\ssombra)--cycle;
 \filldraw[claro] (c4)++(6\munit+0.7\ssombra,0) arc (0:180:0.7\ssombra) --++(0,-\munit+1.4\ssombra) arc(360:270:0.7\ssombra)--++(-\munit+1.4\ssombra,0) arc (90:270:0.7\ssombra)--++(\munit,0) arc(270:360:0.7\ssombra)--cycle;
 \filldraw[claro] (c7)++(4\munit+0.7\ssombra,0) arc (0:180:0.7\ssombra)--++(0,-\munit) arc (180:360:0.7\ssombra)--cycle;
 \filldraw[claro] (c7)++(8\munit+0.7\ssombra,0) arc (0:180:0.7\ssombra)--++(0,-\munit) arc (180:360:0.7\ssombra)--cycle;
\foreach \de in{0,1,2,3,4}{
    \foreach \x in{1,2,...,8}
      \node[inode,shift={(2*\de\munit,0)}] (\x) at (c\x) {};
    \path[dneg]
      (1) edge[dpos] (2)
      (1) edge (3)
      (2) edge[dpos] (4)
      (3) edge (4)
      (3) edge (5)
      (4) edge (6)
      (5) edge (6)
      (5) edge[dpos] (7)
      (6) edge (8)
      (7) edge (8)
    ;
 }
 \foreach \de in {0,1,2,3}
  \node(c) at (2*\de\munit+\munit,\munit) {$,$};
  \llaveI{(-\munit,-0.5\munit)}{($(-\munit,-0.75\munit)+(0,3.25\munit)$)}
  \llaveD{(9\munit,-0.5\munit)}{($(9\munit,-0.75\munit)+(0,3.25\munit)$)}
  \node[texto,left](c) at (-1.5\munit,\munit) {$\Fix(G)=$};
\end{tikzpicture}
\end{center}

\end{example}

\begin{lemma}\label{lemma:decompo1}
If $U\subseteq V$ and $G$ has no positive edge between $U$ and $V-U$ then 
\[
\Fix(G,U)\subseteq\Fix(G).
\] 
\end{lemma}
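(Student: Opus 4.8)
The plan is to fix $S\in\Fix(G[U])$ and $S'\in\Fix(G-U-\N_G(S))$ and to verify directly that $T:=S\cup S'$ satisfies the fixed-set characterization \eqref{eq:fixedset} at \emph{every} vertex $v\in V$. To organize the verification I would partition $V$ into the three disjoint regions $U$, $B:=\N_G(S)\setminus U$, and $W:=V-U-\N_G(S)$, noting that $S\subseteq U$, $S'\subseteq W$, and $T\cap B=\emptyset$. The task is then to show that for $v$ in each region, the equivalence in \eqref{eq:fixedset} holds for $T$ (and in particular that it \emph{fails} exactly where $v\notin T$).

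The whole argument rests on two structural consequences of the hypothesis that $G$ has no positive edge between $U$ and $V-U$. First, every positive component of $G^+$ lies entirely inside $U$ or entirely inside $V-U$; since $\N_G(S)$ absorbs every positive component that meets $N_G(S)$, each positive component is in fact contained in exactly one of the three regions $U$, $B$, $W$. Consequently, for $v\in U$ all positive neighbours lie in $U$, for $v\in W$ all positive neighbours lie in $W$, and for $v\in B$ all positive neighbours lie in $B$. Second, since $N_G(S)\subseteq\N_G(S)$, no vertex of $W$ has a neighbour in $S$ (such a vertex would lie in $N_G(S)$, contradicting $W\cap\N_G(S)=\emptyset$), and every edge joining $S$ to $V-U$ is negative.

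With these facts I would check \eqref{eq:fixedset} region by region. For $v\in U$ the positive neighbours lie in $U$, so $N_{G^+}(v)\subseteq T\iff N_{G^+}(v)\subseteq S$; thus the positive half of the test is governed entirely by $S$, and the only way $S'$ could interfere is by contributing a negative neighbour. But a vertex $v\in S$ has all its neighbours inside $\N_G(S)$, hence disjoint from $S'\subseteq W$, so for $v\in S$ the test collapses to the test in $G[U]$, which $S$ passes; and for $v\in U\setminus S$ the failure of the test in $G[U]$ (a negative neighbour in $S$, or a positive neighbour outside $S$) is preserved in $G$ and forces failure there, correctly keeping $v$ out of $T$. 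For $v\in W$, the absence of neighbours in $S$ removes every contribution of $S$ to the test, and since the positive neighbours remain in $W$, the test for $v$ in $G$ reduces \emph{exactly} to the test for $v$ in $G-U-\N_G(S)$, which $S'$ passes.

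The delicate region, and the part I expect to be the main obstacle, is $B=\N_G(S)\setminus U$: here $B\cap T=\emptyset$, so every such $v$ must be shown to \emph{violate} \eqref{eq:fixedset}. I would split $B$ along the definition of $\N_G$. If $v\in N_G(S)$, then $v$ is adjacent to some $s\in S$ across the $U/(V-U)$ boundary; that edge is negative, so $s\in N_{G^-}(v)\cap T$ and the test fails. Otherwise $v$ enters $\N_G(S)$ only through a positive component $C$ meeting $N_G(S)$; this $C$ must be nontrivial (a singleton component meeting $N_G(S)$ would put $v$ back in $N_G(S)$), so $v$ has a positive neighbour in $C\subseteq B$, which lies outside $T=S\cup S'$, whence $N_{G^+}(v)\not\subseteq T$ and the test fails again. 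The real care needed is bookkeeping: arguing that $C$ is nontrivial and that $C$ avoids both $S$ and $S'$, using the fact that $C$, being a positive component inside $V-U$ that meets $N_G(S)$, is entirely contained in $B$.
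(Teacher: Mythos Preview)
Your proposal is correct and follows essentially the same approach as the paper: a direct case-by-case verification of the fixed-set characterization \eqref{eq:fixedset} for $T=S\cup S'$, partitioning $V$ into $U$, $\N_G(S)\setminus U$, and $V-U-\N_G(S)$. The paper organizes the same verification by the value of $x_v$ first and region second (working with the conjunctive network $f$ rather than the set formulation), but the substantive steps---using the hypothesis to confine positive neighbours to a single region, and handling $\N_G(S)\setminus U$ by splitting into $N_G(S)$ versus a nontrivial positive component---are identical to yours.
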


\begin{proof}
Let $S\in \Fix(G[U])$. Let $U'=V(G)-U-\N_G(S)$ and let $S'\in\Fix(G[U'])$. We want to prove that $S\cup S'\in\Fix(G)$. Let $f$, $f^{U}$ and $f^{U'}$ be the conjunctive networks on $G$, $G[U]$ and $G[U']$, respectively. Let $x\in\B^V$ be such that $\ONE(x)=S\cup S'$ and let us prove that $x$ is a fixed point of $f$. Note that $x_{|U}$ is a fixed point of $f^{U}$ and $x_{|U'}$ is a fixed point of $f^{U'}$. 

Suppose first that $x_v=0$. There are three possibilities. First, if $v\in U$ then $f_v(x)\leq f^{U}_v(x_{|U})=x_v=0$ thus $f_v(x)=0$. Second, if $v\in U'$ then $f_v(x)\leq f^{U'}_v(x_{|U'})=x_v=0$ thus $f_v(x)=0$. Finally, suppose that $v\in\N_G(S)-U$. If there is an edge $uv$ with $u\in S$ then $x_u=1$ and since there is no positive edge between $U$ and $V-U$, this edge is negative thus $f_v(x)=0$. Otherwise, by definition of $\N_G(S)$, $v$ belongs to a non-trivial component $C$ of $G^+$ such that $C\subseteq \N_G(S)$. Since $G$ has no positive edge between $U$ and $V-U$ we have $C\cap U=\emptyset$, thus $C\subseteq \N_G(S)-U$. Thus $G$ has positive edge $uv$ with $u\in \N_G(S)-U$. So $x_u=0$ and we deduce that $f_v(x)=0$. Hence, in every case, $f_v(x)=0=x_v$.   

Suppose now that $x_v=1$ and $v\in S$. If $f_v(x)=0$ then $G$ has a positive edge $uv$ with $x_u=0$ or a negative edge $uv$ with $x_u=1$. If $u\in U$ then $f^{U}_v(x_{|U})=0\neq x_v$, a contradiction. Thus $u\not\in U$ and we deduce from the condition of the statement that $uv$ is a negative edge with $x_u=1$. Thus $u\in S'\subseteq U'$ and we have a contradiction with the fact that $u\in N_G(v)\subseteq \N_G(S)$. Therefore $f_v(x)=1=x_v$. 

Suppose finally that $x_v=1$ and $v\in S'$. If $f_v(x)=0$ then $G$ has a positive edge $uv$ with $x_u=0$ or a negative edge $uv$ with $x_u=1$. If $u\in U'$ then $f^{U'}_v(x_{|U'})=0\neq x_v$, a contradiction.  Thus $u\not\in U'$. Since there is no positive edge between $U$ and $V-U$, and no positive edge between $\N_G(S)$ and $V-\N_G(S)$, we deduce that there is no positive edge between $U'$ and $V-U'$. Thus $uv$ is a negative edge with $x_u=1$. Hence, $u\in S\subseteq U$ and we deduce that $v\in N_{G}(u)\subseteq N_G(S)$, a contradiction. Thus $f_v(x)=1=x_v$.  
\end{proof}

The following is an immediate consequence. 

\begin{lemma}\label{lemma:decompo2}
If $U\subseteq V$ and $G$ has no positive edge between $U$ and $V-U$ then 
\[
\fix(G[U])\leq \fix(G,U)\leq \fix(G)
\] 
\end{lemma}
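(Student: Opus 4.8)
The plan is to read both inequalities off the decomposition already established in Lemma~\ref{lemma:decompo1}. The upper bound $\fix(G,U)\le\fix(G)$ is immediate: under the hypothesis that $G$ has no positive edge between $U$ and $V-U$, Lemma~\ref{lemma:decompo1} gives $\Fix(G,U)\subseteq\Fix(G)$, and taking cardinalities yields $\fix(G,U)=|\Fix(G,U)|\le|\Fix(G)|=\fix(G)$. So all the content sits in the lower bound $\fix(G[U])\le\fix(G,U)$.

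For the lower bound I would first turn the definition of $\Fix(G,U)$ into an exact count by looking at the fibres of the map $T\mapsto T\cap U$. Every $T\in\Fix(G,U)$ has the form $S\cup S'$ with $S\in\Fix(G[U])\subseteq U$ and $S'\in\Fix(G-U-\N_G(S))\subseteq V-U$, so $S=T\cap U$ and $S'=T\setminus U$ are recovered from $T$; the representation is therefore unique, the fibre of $T\mapsto T\cap U$ over a given $S$ is exactly $\{S\cup S'\,:\,S'\in\Fix(G-U-\N_G(S))\}$, and these fibres are disjoint. Hence
\[
\fix(G,U)=\sum_{S\in\Fix(G[U])}\fix\big(G-U-\N_G(S)\big).
\]
Consequently $\fix(G[U])\le\fix(G,U)$ follows as soon as every summand is at least $1$, i.e. as soon as each induced subgraph $G-U-\N_G(S)$ admits at least one fixed set. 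Equivalently, the well-defined map $T\mapsto T\cap U$ from $\Fix(G,U)$ to $\Fix(G[U])$ is surjective precisely when every such subgraph has a fixed set. So it suffices to prove the self-contained fact that \emph{every simple signed graph has at least one fixed set}.

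This existence statement is the only non-bookkeeping ingredient, and I expect it to be the main obstacle: one cannot simply exhibit a canonical fixed set, since $\emptyset$ need not be fixed (it is a fixed set exactly when every vertex has a positive neighbour, which already fails for an isolated vertex). I would prove it by induction on $|V(G)|$, the empty graph being the base case. If $G$ has no positive edge, it is the all-negative signing of its underlying graph and Proposition~\ref{pro:mis} gives $\Fix(G)=\Mis(G)\neq\emptyset$, since every graph has a maximal independent set. Otherwise pick a connected component $C$ of $G^+$ containing a positive edge, so $|C|\ge 2$ and, by the definition of a component, $G$ has no positive edge between $C$ and $V-C$; thus Lemma~\ref{lemma:decompo1} applies with $U=C$. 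Every vertex of $C$ has a positive neighbour inside $C$, so $f^{G[C]}_v(\mathbf 0)=0$ for all $v\in C$, whence $\emptyset\in\Fix(G[C])$ (alternatively, $G[C]^+$ is strongly connected and Proposition~\ref{pro:basic1} applies). Taking $S=\emptyset$ we have $\N_G(\emptyset)=\emptyset$, so $G-C-\N_G(\emptyset)=G[V-C]$ has strictly fewer vertices and, by the induction hypothesis, has a fixed set $S'$; Lemma~\ref{lemma:decompo1} then gives $S\cup S'=S'\in\Fix(G,C)\subseteq\Fix(G)$, so $\Fix(G)\neq\emptyset$.

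Applying this existence fact to each $G-U-\N_G(S)$ makes every summand in the displayed identity at least $1$, so $\fix(G[U])\le\sum_{S\in\Fix(G[U])}\fix(G-U-\N_G(S))=\fix(G,U)$; combined with the immediate upper bound this proves the lemma. The only delicate point is the existence of fixed sets; the rest is the disjointness bookkeeping for the fibres of $T\mapsto T\cap U$ together with a direct appeal to Lemma~\ref{lemma:decompo1}.
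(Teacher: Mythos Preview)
Your proposal is correct and follows the same route as the paper: the upper bound is immediate from Lemma~\ref{lemma:decompo1}, and the lower bound comes from the fibre decomposition
\[
\fix(G,U)=\sum_{S\in\Fix(G[U])}\fix\big(G-U-\N_G(S)\big),
\]
together with the fact that every simple signed graph has at least one fixed set.

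The only difference is in presentation. The paper declares the lemma an ``immediate consequence'' of Lemma~\ref{lemma:decompo1} and then records the existence fact $\fix(G)\geq 1$ in a Remark \emph{after} the lemma, proving it by invoking Lemma~\ref{lemma:decompo2} itself (with $U$ a non-trivial component of $G^+$) and Proposition~\ref{pro:basic1}. Read literally this is circular; it is really the same induction you carry out, just left implicit. Your version, which proves existence directly from Lemma~\ref{lemma:decompo1} and Proposition~\ref{pro:basic1} by induction on $|V|$ before using it, is the cleaner ordering of the two steps.
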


\begin{remark}
We deduce that $\fix(G)\geq 1$ for every simple signed graph $G$. Indeed, if $G$ has no positive edge  then $\fix(G)=\mis(G)\geq 1$. Otherwise, $G^+$ has a connected component $C$, and by Lemma~\ref{lemma:decompo2} and Proposition~\ref{pro:basic1} we have $\fix(G)\geq\fix(G[U])\geq 1$.  
\end{remark}

We now prove a lemma with that gives a stronger conclusion under stronger conditions. 

\begin{lemma}\label{lemma:decompo3}
If $U\subseteq V$, if $G$ has no positive edge between $U$ and $V-U$, and if every vertex in $U$ is adjacent to a positive edge, then 
\[
\Fix(G,U)=\Fix(G).
\] 
\end{lemma}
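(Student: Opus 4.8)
The plan is to combine Lemma~\ref{lemma:decompo1} with a converse inclusion. Lemma~\ref{lemma:decompo1} already gives $\Fix(G,U)\subseteq\Fix(G)$ from the no-crossing hypothesis alone, so it remains to prove $\Fix(G)\subseteq\Fix(G,U)$, and this is where the extra assumption that every vertex of $U$ is adjacent to a positive edge will be used. Given $T\in\Fix(G)$, I would decompose it as $T=S\cup S'$ with $S=T\cap U$ and $S'=T\cap(V-U)$, and then show $S\in\Fix(G[U])$ and $S'\in\Fix(G-U-\N_G(S))$; by the definition of $\Fix(G,U)$ this yields $T\in\Fix(G,U)$.

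First I would check $S\in\Fix(G[U])$ using the characterization \eqref{eq:fixedset}. Since no positive edge joins $U$ to $V-U$, we have $N_{G^+}(v)\subseteq U$ for every $v\in U$, hence $N_{G[U]^+}(v)=N_{G^+}(v)$, while $N_{G[U]^-}(v)=N_{G^-}(v)\cap U$. For $v\in S$ the required conditions follow by simply restricting to $U$ the conditions satisfied by $T$ at $v$. The delicate case is $v\in U\setminus S$, i.e.\ $v\notin T$: here one must exhibit an obstruction \emph{inside} $G[U]$. I claim $N_{G^+}(v)\not\subseteq T$. Indeed, by hypothesis $v$ has a positive neighbour $w\in N_{G^+}(v)$; if $N_{G^+}(v)\subseteq T$ then $w\in T$, and applying \eqref{eq:fixedset} at $w$ together with the symmetry of positive edges gives $v\in N_{G^+}(w)\subseteq T$, a contradiction. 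Thus $N_{G[U]^+}(v)\not\subseteq S$, so $v$ is correctly excluded. I expect this step to be the crux of the lemma: it is the only place where the positive-edge hypothesis is needed, and it fails without it (a vertex of $U$ with no positive edge, killed only by a negative neighbour in $V-U$, would be wrongly forced into $\Fix(G[U])$).

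Next I would treat $S'$, setting $W=V-U-\N_G(S)$. The first point is $S'\subseteq W$, i.e.\ $S'\cap\N_G(S)=\emptyset$: any $y\in S'\subseteq T$ adjacent to some $s\in S\subseteq T$ would create either a negative edge between two vertices of $T$ (impossible by \eqref{eq:fixedset}) or a positive edge crossing $U$ and $V-U$ (impossible by hypothesis); the positive-component part of $\N_G(S)$ is absorbed using the fact that a connected component of $G^+$ is either contained in $T$ or disjoint from it, so that the vertex $z$ witnessing membership in such a component again lies in $T\cap(V-U)$ and would be adjacent to $S$. Consequently $S'=T\cap W$. Since $\N_G(S)$ swallows whole positive components, $N_{G^+}(v)\subseteq W$ for every $v\in W$, so once more positive neighbourhoods agree in $G$ and $G[W]$. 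Verifying \eqref{eq:fixedset} for $G[W]$, the forward direction is again a restriction of the conditions for $T$; in the backward direction, for $v\in W\setminus S'$ (so $v\notin T$) the positive-obstruction case is immediate, and in the negative-obstruction case a negative neighbour $u\in T$ of $v$ must in fact lie in $W$: it cannot lie in $U$ (else $v\in N_G(S)\subseteq\N_G(S)$, contradicting $v\in W$), and it cannot lie in $\N_G(S)$ (since $u\in T\cap(V-U)=S'$ and $S'\cap\N_G(S)=\emptyset$), so $u\in N_{G[W]^-}(v)\cap S'$.

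Putting the two parts together, $T=S\cup S'$ with $S\in\Fix(G[U])$ and $S'\in\Fix(G-U-\N_G(S))$, hence $T\in\Fix(G,U)$; this proves $\Fix(G)\subseteq\Fix(G,U)$, and with Lemma~\ref{lemma:decompo1} we conclude $\Fix(G,U)=\Fix(G)$. The only genuinely non-routine parts are the positive-neighbour argument for $S$ and the careful bookkeeping that keeps the relevant negative neighbour inside $W$; everything else is a matter of restricting the fixed-set conditions for $T$ to the induced subgraphs.
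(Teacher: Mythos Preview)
Your proposal is correct and follows essentially the same route as the paper's proof: both invoke Lemma~\ref{lemma:decompo1} for one inclusion and, for the converse, split a fixed set $T$ as $S=T\cap U$ and $S'=T\cap(V-U-\N_G(S))$, using the positive-edge hypothesis at each $v\in U\setminus S$ to force a positive neighbour of $v$ outside $T$ (the paper phrases this as ``if $x_v=0$ then $x_u=0$'' for a positive edge $uv$). The only cosmetic difference is that you work throughout with the set-theoretic characterization~\eqref{eq:fixedset} while the paper alternates between fixed points of $f$ and fixed sets; your handling of $S'\cap\N_G(S)=\emptyset$ via ``positive components are all-in or all-out of $T$'' is equivalent to the paper's explicit path argument along a positive component.
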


\begin{proof}
By Lemma~\ref{lemma:decompo1}, it is sufficient to prove that $\Fix(G)\subseteq \Fix(G,U)$. Let $f$ be the conjunctive network on $G$, and let $x$ be a fixed point of $f$. Let 
\[
S=\ONE(x)\cap U,\qquad
U'=V-U-\N_G(S),\qquad
S'=\ONE(x)\cap U'.
\]
Let $f^{U}$ and $f^{U'}$ be the conjunctive networks on $G[U]$ and $G[U']$.

Let $v\in U$. Since $v$ is adjacent to a positive edge and since $G$ has no positive edge between $U$ and $V-U$, we deduce that $G[U]$ has a positive edge $uv$. If $x_v=0$ then $x_u=0$ and we deduce that $f^{U}_v(x_{|U})=0=x_v$. If $x_v=1$ then $f_v(x)=1\leq f^{U}_v(x_{|U})$ thus $f^{U}_v(x_{|U})=1=x_v$. Hence, we have proved that $x_{|U}$ is a fixed point of $f^{U}$, that is, 
\begin{equation}\label{decompo3.1}
S\in \Fix(G[U]).
\end{equation}

Let $v\in \N_G(S)-U$ and let us prove that $x_v=0$. By the definition of $\N_G(S)$, $v$ belongs to a component $C$ of $G^+$ such that $C\cap N_G(S)\neq\emptyset$ and $C\subseteq \N_G(S)$. Actually, $C\subseteq \N_G(S)-U$ since otherwise $G$ has a positive edge between $U$ and $V-U$. Thus there exists a path $u,w_1,w_2,\dots,w_k$ from $u\in S$ to $w_k=v$, where all the vertices $w_\ell$ are in $C$. Since $uw_1$ is negative and $x_u=1$ we have $f_{w_1}(x)=0$ thus $x_{w_1}=0$ and we deduce from Proposition~\ref{pro:basic1} that $x_v=0$. Thus we have  $\ONE(x)\cap (\N_G(S)-U)=\emptyset$. In other words, 
\begin{equation}\label{decompo3.2}
\ONE(x)=S\cup S'.
\end{equation}
 
Let $v\in U'$. Suppose that $x_v=0=f_v(x)$, and suppose, for a contradiction that $f^{U'}_v(x_{|U'})=1$. Since there is no positive edge between $U$ and $V-U$, and no positive edge between $\N_G(S)$ and $V-\N_G(S)$, we deduce that there is no positive edge between $U'$ and $V-U'$. Thus there exists a negative edge $uv$ with $u\in V-U'$ and $x_u=1$. But then $u\in S$, thus $v\in \N_G(S)$, a contradiction. Consequently, $f^{U'}_v(x_{|U'})=0=x_v$. Now, if $x_v=1$ then $f_v(x)=1\leq f^{U'}_v(x_{|U'})$ thus $f^{U'}_v(x_{|U'})=1=x_v$. Hence, we have proved that $x_{|U'}$ is a fixed point of $f^{U'}$, that is, 
\begin{equation}\label{decompo3.3}
S'\in \Fix(G-U-\N_G(S)).
\end{equation} 

According to \eqref{decompo3.1}, \eqref{decompo3.2} and \eqref{decompo3.3}, we have $\ONE(x)\in\Fix(G,U)$. 
\end{proof}

In the following, if $S$ is a subset of $V$ and $\mathcal{S}$ is a subset of the power set of $V$, then
\[
S\sqcup\mathcal{S}=\{S\cup S'\,|\,S'\in\mathcal{S}\}.
\]

\begin{lemma}\label{lemma:decompo4}
Let $C$ be is a non-trivial connected component of $G^+$. If $G[C]$ has no negative edge then 
\[
\Fix(G)=\Fix(G-C) \cup \big(C\sqcup\Fix(G-\N_G(C))\big)
\]
and otherwise 
\[
\Fix(G)=\Fix(G-C).
\]
\end{lemma}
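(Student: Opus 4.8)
The plan is to apply the decomposition tool from Lemma~\ref{lemma:decompo3} with $U=C$, since the two cases of the statement then reduce to computing $\Fix(G[C])$, and this is handled at once by Proposition~\ref{pro:basic1}.

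First I would check that the hypotheses of Lemma~\ref{lemma:decompo3} hold for $U=C$. Since $C$ is a connected component of $G^+$, there is no positive edge between $C$ and $V-C$. Moreover $C$ is non-trivial and connected in $G^+$, so every vertex of $C$ has a positive neighbour inside $C$; in particular every vertex of $C$ is adjacent to a positive edge. Lemma~\ref{lemma:decompo3} therefore yields
\[
\Fix(G)=\Fix(G,C)=\bigl\{S\cup S'\ \bigl|\ S\in\Fix(G[C]),\ S'\in\Fix(G-C-\N_G(S))\bigr\}.
\]

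Next I would determine $\Fix(G[C])$ using Proposition~\ref{pro:basic1}. The positive part of $G[C]$ is exactly the positive component $C$, hence connected, i.e.\ strongly connected as a symmetric digraph. So if $G[C]$ has no negative edge then $\Fix(G[C])=\{\emptyset,C\}$, and if $G[C]$ has a negative edge then $\Fix(G[C])=\{\emptyset\}$. It then remains to evaluate the admissible choices of $S$. For $S=\emptyset$ we have $\N_G(\emptyset)=\emptyset$, so $G-C-\N_G(\emptyset)=G-C$ and the corresponding sets are exactly $\Fix(G-C)$. For $S=C$ I would use that $C\subseteq N_G(C)\subseteq\N_G(C)$ (each vertex of $C$ has a neighbour in $C$ because $C$ is non-trivial and connected), so that $V-C-\N_G(C)=V-\N_G(C)$ and hence $G-C-\N_G(C)=G-\N_G(C)$; the corresponding sets are then exactly $C\sqcup\Fix(G-\N_G(C))$.

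Combining these, if $G[C]$ has no negative edge both $S=\emptyset$ and $S=C$ contribute, giving
\[
\Fix(G)=\Fix(G-C)\cup\bigl(C\sqcup\Fix(G-\N_G(C))\bigr),
\]
and otherwise only $S=\emptyset$ survives, giving $\Fix(G)=\Fix(G-C)$. Since the substantive work is carried by Lemmas~\ref{lemma:decompo1}--\ref{lemma:decompo3} together with Proposition~\ref{pro:basic1}, there is no single difficult step; the only point requiring care is the bookkeeping observation $C\subseteq\N_G(C)$, which collapses $G-C-\N_G(C)$ to $G-\N_G(C)$ and produces the clean form $C\sqcup\Fix(G-\N_G(C))$ appearing in the statement.
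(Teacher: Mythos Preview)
Your proof is correct and follows exactly the same route as the paper: apply Lemma~\ref{lemma:decompo3} with $U=C$ (after verifying its hypotheses), compute $\Fix(G[C])$ via Proposition~\ref{pro:basic1}, and then split the description of $\Fix(G,C)$ according to $S=\emptyset$ and $S=C$. The paper's version is simply terser and leaves the checks that $C$ satisfies the hypotheses of Lemma~\ref{lemma:decompo3} and that $C\subseteq\N_G(C)$ implicit.
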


\begin{proof}
If $G[C]$ has no negative edge then by Proposition~\ref{pro:basic1} we have $\Fix(G[C])=\{\emptyset,C\}$ and from Lemma~\ref{lemma:decompo3} we deduce that  
\begin{align*}
\Fix(G)&=\Fix(G,C)\\
&=\{S\cup S'\,|\,S\in\{\emptyset,C\},~S'\in\Fix(G-C-\N_G(S))\}\\
&=\{\emptyset\cup S'\,|\,S'\in \Fix(G-C)\}\cup \{C\cup S'\,|\,S'\in \Fix(G-\N_G(C))\}\\
&= \Fix(G-C) \cup \big(C\sqcup\Fix(G-\N_G(C))\big).
\end{align*}
If $G[C]$ has a negative edge then by Proposition~\ref{pro:basic1} we have $\Fix(G[C])=\{\emptyset\}$ and proceeding as above we get $\Fix(G)=\Fix(G-C)$. 
\end{proof}

The proof of Lemma~\ref{lemma:matching} is a straightforward application of the above decomposition~tools. 

\begin{proof}[Proof of Lemma~\ref{lemma:matching}]
Suppose that $G^+$ has a non-trivial component $C$ such that $G[C]$ has only positive edges. Let $ab$ be one of these edges, and let $G'$ be the signed graph obtained from $G$ by making negative each edge of $G[C]$ excepted $ab$. To prove that lemma, it is clearly sufficient to prove that $\fix(G')\geq \fix(G)$ (because then, the reduction process of positive components in single positive edges can be repeated until that each positive component of $G^+$ reduces to a single positive edge). According to Lemma \ref{lemma:decompo4} we have 
\begin{equation}\label{positive2.1}
\fix(G)\leq \fix(G-C)+\fix(G-\N_{G}(C))
\end{equation}
and
\begin{equation}\label{positive2.2}
\fix(G')=\fix(G'-\{a,b\})+\fix(G'-\N_{G'}(\{a,b\})).
\end{equation}
Since $G-C=G'-C$ and $\{a,b\}\subseteq C$, $G-C$ is an induced subgraph of $G'-\{a,b\}$, thus according to Lemma~\ref{lemma:decompo2} we have  
\begin{equation}\label{positive2.3}
\fix(G-C)\leq \fix(G'-\{a,b\}).
\end{equation}
Since $\{a,b\}\subseteq C$ and since each positive edge of $G'$ is a positive edge of $G$, we have $\N_{G'}(\{a,b\})\subseteq \N_G(C)$. Since $G-C=G'-C$ we deduce that $G-\N_{G}(C)$ is an induced subgraph of $G'-\N_{G'}(\{a,b\})$. Thus according to Lemma~\ref{lemma:decompo2} we have 
\begin{equation}\label{positive2.4}
\fix(G-\N_{G}(C))\leq \fix(G'-\N_{G'}(\{a,b\})).
\end{equation}
The lemma follows from \eqref{positive2.1}, \eqref{positive2.2}, \eqref{positive2.3} and  \eqref{positive2.4}.
\end{proof}

\subsection{Proof of Lemma~\ref{lemma:C_4} (suppression of positive edges)}\label{sec:suppression}

In all this section, $G$ is a simple signed graph with vertex set $V$ in which the set of positive edges is a matching. Let $G'$ be any simple signed graph obtained from $G$ by making negative a positive edge $e$. We will study the variation of the number of fixed sets under this transformation. Actually, we will prove that if $G$ has no induced copy of $C_4$ that contains $e$ and no other positive edge, then $\fix(G)\leq \fix(G')$, and in any case $\fix(G)\leq (3/2)\,\fix(G')$. The control of the variation of fixed sets is a little bit technical. This is why we decompose the proof in several steps. We first assume that (i) there is a unique positive edge in $G$ and that (ii) this positive edge is in the neighborhood of each vertex (cf. Lemma~\ref{lemma:edge1}). Then, thanks to an additional decomposition property (cf. Lemma~\ref{lemma:edge2}), we suppress condition (ii). We finally suppress condition (i) to get the general statement (cf. Lemma~\ref{lemma:edge3}). 

\begin{lemma}\label{lemma:edge1}
Suppose that $G$ has a unique positive edge, say $ab$, and $V=N_G(a)\cup N_G(b)$. Let $G'$ be the simple signed graph obtained from $G$ by making $ab$ negative. Then 
\[
\fix(G)\leq \fix(G')+1, 
\]
and if $G$ has no induced copy of $C_4$ containing $ab$, then 
\[
\fix(G)\leq \fix(G').
\]
\end{lemma}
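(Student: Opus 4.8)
The plan is to reduce the statement to two inequalities between numbers of maximal independent sets, and to settle those by an explicit injection whose failure to be surjective is controlled by the $C_4$.

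First I would describe $\Fix(G)$ completely. Write $H'=G-\{a,b\}$ and set $A=N_G(a)\setminus\{b\}$ and $B=N_G(b)\setminus\{a\}$, so the hypothesis $V=N_G(a)\cup N_G(b)$ reads $V\setminus\{a,b\}=A\cup B$. Since $ab$ is the only positive edge, the characterization \eqref{eq:fixedset} gives, for each $S\in\Fix(G)$: a vertex $v\notin\{a,b\}$ lies in $S$ iff $N_G(v)\cap S=\emptyset$; while $a\in S$ iff ($A\cap S=\emptyset$ and $b\in S$), and symmetrically for $b$. Hence $a\in S\iff b\in S$. If $a,b\in S$ then $A\cap S=B\cap S=\emptyset$, forcing $S=\{a,b\}$, and the hypothesis $V\setminus\{a,b\}=A\cup B$ guarantees $\{a,b\}$ really is a fixed set. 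If $a,b\notin S$ the remaining conditions say exactly $S\in\Mis(H')$. Thus $\Fix(G)=\{\{a,b\}\}\sqcup\Mis(H')$ and
\[
\fix(G)=1+\mis(H').
\]
Since $G'$ has only negative edges, Proposition~\ref{pro:mis} gives $\fix(G')=\mis(G)$. So it remains to prove $\mis(H')\le\mis(G)$ in general, and $\mis(H')+1\le\mis(G)$ when $G$ has no induced $C_4$ through $ab$.

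For the first inequality I would build an injection $\Phi\colon\Mis(H')\to\Mis(G)$. Given $I\in\Mis(H')$, put $\Phi(I)=I$ if $I$ meets both $A$ and $B$; put $\Phi(I)=I\cup\{a\}$ if $I\cap A=\emptyset$; and put $\Phi(I)=I\cup\{b\}$ if $I\cap B=\emptyset$ but $I\cap A\ne\emptyset$. Using that $a$ (resp.\ $b$) is adjacent to every vertex of $A$ (resp.\ $B$) and that $ab$ is an edge, one checks that each $\Phi(I)$ is a maximal independent set of $G$, and that the three cases fall into the three disjoint families of $\Mis(G)$ sorted by whether the set avoids $\{a,b\}$, contains $a$, or contains $b$; recovering $I$ from $\Phi(I)$ in each family shows $\Phi$ is injective. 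This gives $\mis(H')\le\mis(G)$, hence $\fix(G)\le\fix(G')+1$.

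For the sharper bound I would exhibit one maximal independent set of $G$ outside $\Phi(\Mis(H'))$. The key observation is that an induced $C_4$ through $ab$ is exactly a pair $d\in A\setminus B$, $c\in B\setminus A$ with $cd$ an edge; so its absence means there is no edge between $A\setminus B$ and $B\setminus A$. If $A\ne B$, by the $a\leftrightarrow b$ symmetry I may assume $A\setminus B\ne\emptyset$ and pick $v\in A\setminus B$; since $v$ is not adjacent to $b$ nor (by $C_4$-freeness) to $B\setminus A$, we get $N_G(v)\subseteq\{a\}\cup A$. Extending $\{a\}$ to any maximal independent set $J$ of $G$, we have $a\in J$ and $J\cap A=\emptyset$, so $N_G(v)\cap J=\{a\}$; thus $J\setminus\{a\}$ does not dominate $v$ in $H'$ and is not a maximal independent set of $H'$. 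As $J$ contains $a$ but not $b$ and $J\setminus\{a\}\notin\Mis(H')$, $J$ is none of the three $\Phi$-types, so $\mis(G)\ge\mis(H')+1$. If instead $A=B$, then either $V=\{a,b\}$, where $\Mis(H')=\{\emptyset\}$ and $\{a\},\{b\}\in\Mis(G)$ give $\mis(G)=2=\mis(H')+1$; or $A=B=V\setminus\{a,b\}\ne\emptyset$, where every maximal independent set of $H'$ meets $A=B$ so every value of $\Phi$ avoids $\{a,b\}$, whereas $\{a\}$ and $\{b\}$ are maximal independent sets of $G$, giving $\mis(G)\ge\mis(H')+2$. Either way $\mis(H')+1\le\mis(G)$, i.e.\ $\fix(G)\le\fix(G')$.

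The routine part is checking that $\Phi$ is a well-defined injection; the crux is the sharper bound, where $C_4$-freeness is used precisely to force $N_G(v)\subseteq\{a\}\cup A$, making $v$ privately dominated by $a$ in every maximal independent set through $a$. The main subtlety I anticipate is the degenerate twin case $A=B$ (and the smaller case $V=\{a,b\}$), which escapes the private-domination argument and has to be counted directly.
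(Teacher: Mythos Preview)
Your proof is correct and follows essentially the same approach as the paper: both reduce the lemma to the inequalities $\mis(G-\{a,b\})\le\mis(G)$ and (under $C_4$-freeness) $\mis(G-\{a,b\})+1\le\mis(G)$, and establish these via the same injection $\Mis(G-\{a,b\})\to\Mis(G)$ that adds $a$, $b$, or nothing according to which neighborhood the set meets. The only organizational differences are that you derive $\fix(G)=1+\mis(G-\{a,b\})$ directly from \eqref{eq:fixedset} rather than invoking Lemma~\ref{lemma:decompo4}, and your case split for the sharper bound (the single dichotomy $A=B$ vs.\ $A\neq B$) is a bit more streamlined than the paper's four-case analysis on $(A\setminus B,\,B\setminus A)$.
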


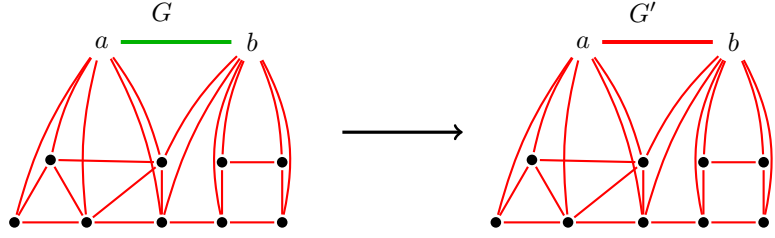
\begin{figure}[!htb]
\centering
\begin{tikzpicture}
  \node[texto] (l)at (0,0.5\unit){$G$};
 \node[inode,texto](u) at (-\unit,0){$a$};
 \node[inode,texto](v) at (1.5\unit,0){$b$};
 \node[vnode](1) at (0,-3\unit){};
 \node[vnode](2) at (0,-2\unit){};
 \node[vnode] (3) at (-1.25\unit,-3\unit){};
 \node[vnode,shift={(180:1.2\unit)}] (4) at (3){};
 \node[vnode,shift={(120:1.2\unit)}] (5) at (3){};
 \node[vnode] (6) at (1\unit,-3\unit){};
 \node[vnode] (7) at (2\unit,-3\unit){};
 \node[vnode] (8) at (1\unit,-2\unit){};
 \node[vnode] (9) at (2\unit,-2\unit){};
 \path[dneg]
 (u) edge[dpos,grueso] (v)
 (u) edge[bend left=10] (1)
 (u) edge[bend left=10] (2)
 (u) edge[bend right=10] (3)
 (u) edge[bend right=10] (4)
 (u) edge[bend right=10] (5)
 (v) edge[bend right=10] (1)
 (v) edge[bend right=10] (2)
 (v) edge[bend right=20] (6)
 (v) edge[bend left=20] (7)
 (v) edge[bend right=10] (8)
 (v) edge[bend left=10] (9)
 (1) edge (2)
 (1) edge (3)
 (5) edge (2)
 (3) edge (2)
 (3) edge (4)
 (3) edge (5)
 (5) edge (4)
 (1) edge (6)
 (7) edge (6)
 (8) edge (6)
 (7) edge (9)
 (8) edge (9)
 ;
 \flecha{(3\unit,-1.5\unit)}{2\unit}
 
 \pgftransformxshift{8\unit}
 \node[texto] (l)at (0,0.5\unit){$G'$};
 \node[inode,texto](u) at (-\unit,0){$a$};
 \node[inode,texto](v) at (1.5\unit,0){$b$};
 \node[vnode](1) at (0,-3\unit){};
 \node[vnode](2) at (0,-2\unit){};
 \node[vnode] (3) at (-1.25\unit,-3\unit){};
 \node[vnode,shift={(180:1.2*\unit)}] (4) at (3){};
 \node[vnode,shift={(120:1.2*\unit)}] (5) at (3){};
 \node[vnode] (6) at (1\unit,-3\unit){};
 \node[vnode] (7) at (2\unit,-3\unit){};
 \node[vnode] (8) at (1\unit,-2\unit){};
 \node[vnode] (9) at (2\unit,-2\unit){};
 \path[dneg]
 (u) edge[grueso] (v)
 (u) edge[bend left=10] (1)
 (u) edge[bend left=10] (2)
 (u) edge[bend right=10] (3)
 (u) edge[bend right=10] (4)
 (u) edge[bend right=10] (5)
 (v) edge[bend right=10] (1)
 (v) edge[bend right=10] (2)
 (v) edge[bend right=20] (6)
 (v) edge[bend left=20] (7)
 (v) edge[bend right=10] (8)
 (v) edge[bend left=10] (9)
 (1) edge (2)
 (1) edge (3)
 (5) edge (2)
 (3) edge (2)
 (3) edge (4)
 (3) edge (5)
 (5) edge (4)
 (1) edge (6)
 (7) edge (6)
 (8) edge (6)
 (7) edge (9)
 (8) edge (9)
 ;

\end{tikzpicture}
\caption{Illustration of the transformation described in Lemma~\ref{lemma:edge1}.}\label{fig:edge1}
\end{figure}

\begin{proof}
According to Lemma \ref{lemma:decompo4} we have 
\[
\Fix(G)= \Fix(G-\{a,b\}) \cup 
\big(\{a,b\}\sqcup \Fix(G-\N_G(\{a,b\})\big)
\]
Since $V=N_G(\{a,b\})\subseteq \N_G(\{a,b\})$, $G-\N_G(\{a,b\})$ is empty, and since $G-\{a,b\}$ has only negative edges we deduce that 
\begin{equation}\label{edge1.1}
\fix(G)=\mis(G-\{a,b\})+1.
\end{equation}
Also, since $G'$ has only negative edges (see Figure~\ref{fig:edge1}) we have, 
\begin{equation}\label{edge1.2}
\fix(G')=\mis(G).
\end{equation}
Let $A=N_G(a)-N_G(b)$, $B=N_G(b)-N_G(a)$ and $C=N_G(a)\cap N_G(b)$ (see Figure~\ref{fig:edge1_0}). Note that $G$ has an induced copy of $C_4$ containing $ab$ if and only if there is an edge between $A$ and $B$. We consider four cases. 

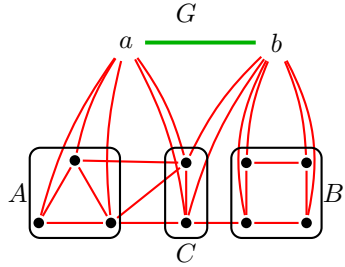
\begin{figure}[!htb]
\centering
\begin{tikzpicture}
  \node[texto] (l)at (0,0.5\unit){$G$};
 \node[inode,texto](u) at (-\unit,0){$a$};
 \node[inode,texto](v) at (1.5\unit,0){$b$};
 \node[vnode](1) at (0,-3\unit){};
 \node[vnode](2) at (0,-2\unit){};
 \node[vnode] (3) at (-1.25\unit,-3\unit){};
 \node[vnode,shift={(180:1.2\unit)}] (4) at (3){};
 \node[vnode,shift={(120:1.2\unit)}] (5) at (3){};
 \node[vnode] (6) at (1\unit,-3\unit){};
 \node[vnode] (7) at (2\unit,-3\unit){};
 \node[vnode] (8) at (1\unit,-2\unit){};
 \node[vnode] (9) at (2\unit,-2\unit){};
 \path[dneg]
 (u) edge[dpos,grueso] (v)
 (u) edge[bend left=10] (1)
 (u) edge[bend left=10] (2)
 (u) edge[bend right=10] (3)
 (u) edge[bend right=10] (4)
 (u) edge[bend right=10] (5)
 (v) edge[bend right=10] (1)
 (v) edge[bend right=10] (2)
 (v) edge[bend right=20] (6)
 (v) edge[bend left=20] (7)
 (v) edge[bend right=10] (8)
 (v) edge[bend left=10] (9)
 (1) edge (2)
 (1) edge (3)
 (5) edge (2)
 (3) edge (2)
 (3) edge (4)
 (3) edge (5)
 (5) edge (4)
 (1) edge (6)
 (7) edge (6)
 (8) edge (6)
 (7) edge (9)
 (8) edge (9)
 ;
 \draw[cuadros] (-2.6\unit,-3.25\unit)rectangle+(1.5\unit,1.5\unit);
 \node[texto](l)at (-2.8\unit,-2.5\unit){$A$};
 \draw[cuadros] (-0.35\unit,-3.25\unit)rectangle+(0.7\unit,1.5\unit);
 \node[texto](l)at (0,-3.5\unit){$C$};
 \draw[cuadros] (0.75\unit,-3.25\unit)rectangle+(1.5\unit,1.5\unit);
 \node[texto](l)at (2.45\unit,-2.5\unit){$B$};
   \end{tikzpicture}
\caption{Illustration of the sets $A$, $B$ and $C$ described in the proof of Lemma~\ref{lemma:edge1}.}\label{fig:edge1_0}
\end{figure}

\begin{enumerate}
\item
Suppose that $A\neq\emptyset$ and $B\neq\emptyset$. Let $S$ be a maximal independent set of $G-\{a,b\}$. If $S$ intersects $C$, or $S$ intersects both $A$ and $B$, then $a$ and $b$ have at least one neighbor in $S$, thus $S$ is a maximal independent set of $G$. Otherwise it is easy to check that either $S$ intersects $A$ and $S\cup\{b\}$ is a maximal independent set of $G$, or $S$ intersects $B$ and $S\cup\{a\}$ is a maximal independent set of $G$. Consequently, the following map is an injection from $\Mis(G-\{a,b\})$ to $\Mis(G)$:
\[
S\mapsto 
\left\{
\begin{array}{llcll}
S&\text{if }S\cap C\neq\emptyset&\text{or}&(S\cap A\neq\emptyset\text{ and }S\cap B\neq\emptyset)&\quad(i)\\
S\cup\{b\}&\text{if }S\cap C=\emptyset&\text{and}&(S\cap A\neq\emptyset\text{ and }S\cap B=\emptyset)&\quad(ii)\\
S\cup\{a\}&\text{if }S\cap C=\emptyset&\text{and}&(S\cap A=\emptyset\text{ and }S\cap B\neq\emptyset)&\quad(iii)\\
\end{array}
\right.
\]
Thus $\mis(G-\{a,b\})\leq\mis(G)$ and we deduce from \eqref{edge1.1} and \eqref{edge1.2} that $\fix(G)\leq \fix(G')+1$. Furthermore, if $G$ has no induced copy of $C_4$ containing $ab$, then there is no edge between $A$ and $B$, and thus cases $(ii)$ and $(iii)$ are not possible. Thus $\Mis(G-\{a,b\})\subseteq \Mis(G)$ and since $G$ has at least one maximal independent set containing $a$ and one maximal independent set containing $b$, we deduce that \mbox{$\mis(G-\{a,b\})\leq \mis(G)-2$} (see Figure~\ref{fig:edge1_1}). Using \eqref{edge1.1} and \eqref{edge1.2} we get $\fix(G)\leq \fix(G')$ (see Figure~\ref{fig:edge1_1}).     

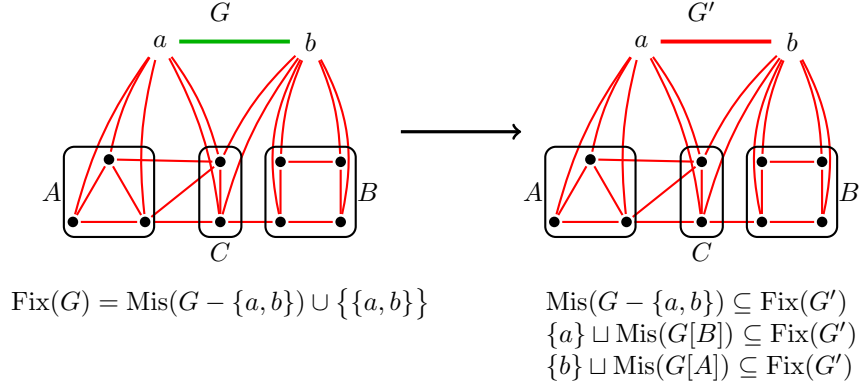
\begin{figure}[!htb]
\centering
\begin{tikzpicture}
 \node[texto] (l)at (0,0.5\unit){$G$};
 \node[inode,texto](u) at (-\unit,0){$a$};
 \node[inode,texto](v) at (1.5\unit,0){$b$};
 \node[vnode](1) at (0,-3\unit){};
 \node[vnode](2) at (0,-2\unit){};
 \node[vnode] (3) at (-1.25\unit,-3\unit){};
 \node[vnode,shift={(180:1.2\unit)}] (4) at (3){};
 \node[vnode,shift={(120:1.2\unit)}] (5) at (3){};
 \node[vnode] (6) at (1\unit,-3\unit){};
 \node[vnode] (7) at (2\unit,-3\unit){};
 \node[vnode] (8) at (1\unit,-2\unit){};
 \node[vnode] (9) at (2\unit,-2\unit){};
 \path[dneg]
 (u) edge[dpos,grueso] (v)
 (u) edge[bend left=10] (1)
 (u) edge[bend left=10] (2)
 (u) edge[bend right=10] (3)
 (u) edge[bend right=10] (4)
 (u) edge[bend right=10] (5)
 (v) edge[bend right=10] (1)
 (v) edge[bend right=10] (2)
 (v) edge[bend right=20] (6)
 (v) edge[bend left=20] (7)
 (v) edge[bend right=10] (8)
 (v) edge[bend left=10] (9)
 (1) edge (2)
 (1) edge (3)
 (5) edge (2)
 (3) edge (2)
 (3) edge (4)
 (3) edge (5)
 (5) edge (4)
 (1) edge (6)
 (7) edge (6)
 (8) edge (6)
 (7) edge (9)
 (8) edge (9)
 ;
  \draw[cuadros] (-2.6\unit,-3.25\unit)rectangle+(1.5\unit,1.5\unit);
 \node[texto](l)at (-2.8\unit,-2.5\unit){$A$};
 \draw[cuadros] (-0.35\unit,-3.25\unit)rectangle+(0.7\unit,1.5\unit);
 \node[texto](l)at (0,-3.5\unit){$C$};
\draw[cuadros] (0.75\unit,-3.25\unit)rectangle+(1.5\unit,1.5\unit);
\node[texto](l)at (2.45\unit,-2.5\unit){$B$};
  \node[texto,below](l)at (0,-4\unit){$\Fix(G)=\Mis(G-\{a,b\})\cup \big\{\{a,b\}\big\}$};

 \flecha{(3\unit,-1.5\unit)}{2\unit}
 
 \pgftransformxshift{8\unit}
 \node[texto] (l)at (0,0.5\unit){$G'$};
 \node[inode,texto](u) at (-\unit,0){$a$};
 \node[inode,texto](v) at (1.5\unit,0){$b$};
 \node[vnode](1) at (0,-3\unit){};
 \node[vnode](2) at (0,-2\unit){};
 \node[vnode] (3) at (-1.25\unit,-3\unit){};
 \node[vnode,shift={(180:1.2\unit)}] (4) at (3){};
 \node[vnode,shift={(120:1.2\unit)}] (5) at (3){};
 \node[vnode] (6) at (1\unit,-3\unit){};
 \node[vnode] (7) at (2\unit,-3\unit){};
 \node[vnode] (8) at (1\unit,-2\unit){};
 \node[vnode] (9) at (2\unit,-2\unit){};
 \path[dneg]
 (u) edge[grueso] (v)
 (u) edge[bend left=10] (1)
 (u) edge[bend left=10] (2)
 (u) edge[bend right=10] (3)
 (u) edge[bend right=10] (4)
 (u) edge[bend right=10] (5)
 (v) edge[bend right=10] (1)
 (v) edge[bend right=10] (2)
 (v) edge[bend right=20] (6)
 (v) edge[bend left=20] (7)
 (v) edge[bend right=10] (8)
 (v) edge[bend left=10] (9)
 (1) edge (2)
 (1) edge (3)
 (5) edge (2)
 (3) edge (2)
 (3) edge (4)
 (3) edge (5)
 (5) edge (4)
 (1) edge (6)
 (7) edge (6)
 (8) edge (6)
 (7) edge (9)
 (8) edge (9)
 ;
 \draw[cuadros] (-2.6\unit,-3.25\unit) rectangle +(1.5\unit,1.5\unit);
 \node[texto] (l) at (-2.8\unit,-2.5\unit){$A$};
  \draw[cuadros] (-0.35\unit,-3.25\unit) rectangle+(0.7\unit,1.5\unit);
  \node[texto] (l) at (0,-3.5\unit){$C$};
 \draw[cuadros] (0.75\unit,-3.25\unit) rectangle+(1.5\unit,1.5\unit);
 \node[texto] (l) at (2.45\unit,-2.5\unit){$B$};
  \node[texto,below](l) at (0,-4\unit){$\begin{array}{l}\Mis(G-\{a,b\})\subseteq \Fix(G')\\ \{a\}\sqcup \Mis(G[B])\subseteq \Fix(G')\\\{b\}\sqcup\Mis(G[A])\subseteq \Fix(G')\end{array}$};
\end{tikzpicture}
\caption{Fixed sets when $A\neq\emptyset$ and $B\neq\emptyset$.}\label{fig:edge1_1}
\end{figure}

\item
Suppose that $A\neq\emptyset$ and $B=\emptyset$. 
Let $S$ be a maximal independent set of $G-\{a,b\}$. If $S$ intersects $C$ then $S$ is clearly a maximal independent set of $G$. Otherwise, $S$ intersects $A$ and $S\cup\{b\}$ is then a maximal independent set of $G$. Thus, the following map is an injection from $\Mis(G-\{a,b\})$ to $\Mis(G)$:
\[
S\mapsto 
\left\{
\begin{array}{ll}
S&\text{if }S\cap C\neq\emptyset\\
S\cup\{b\}&\text{otherwise}
\end{array}
\right.
\]
Since $G$ has at least one maximal independent set containing $a$, we deduce that $\mis(G-\{a,b\})\leq \mis(G)-1$. Using \eqref{edge1.1} and \eqref{edge1.2} we get $\fix(G)\leq\fix(G')$ (see Figure~\ref{fig:edge1_2}).     

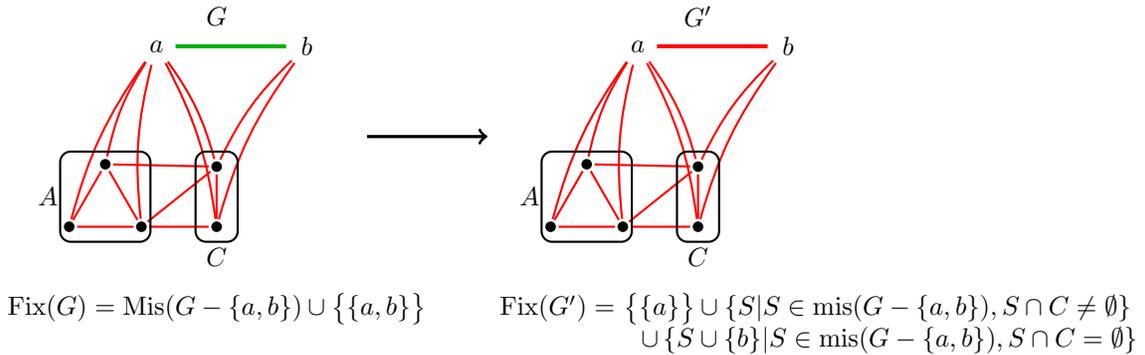
\begin{figure}[!htb]
\centering
\begin{tikzpicture}
  \node[texto] (l)at (0,0.5\unit){$G$};
 \node[inode,texto](u) at (-\unit,0){$a$};
 \node[inode,texto](v) at (1.5\unit,0){$b$};
 \node[vnode](1) at (0,-3\unit){};
 \node[vnode](2) at (0,-2\unit){};
 \node[vnode] (3) at (-1.25\unit,-3\unit){};
 \node[vnode,shift={(180:1.2\unit)}] (4) at (3){};
 \node[vnode,shift={(120:1.2\unit)}] (5) at (3){};
 \path[dneg]
 (u) edge[dpos,grueso] (v)
 (u) edge[bend left=10] (1)
 (u) edge[bend left=10] (2)
 (u) edge[bend right=10] (3)
 (u) edge[bend right=10] (4)
 (u) edge[bend right=10] (5)
 (v) edge[bend right=10] (1)
 (v) edge[bend right=10] (2)
 (1) edge (2)
 (1) edge (3)
 (5) edge (2)
 (3) edge (2)
 (3) edge (4)
 (3) edge (5)
 (5) edge (4)
 ;
  \draw[cuadros] (-2.6\unit,-3.25\unit)rectangle+(1.5\unit,1.5\unit);
 \node[texto](l)at (-2.8\unit,-2.5\unit){$A$};
 \draw[cuadros] (-0.35\unit,-3.25\unit)rectangle+(0.7\unit,1.5\unit);
 \node[texto](l)at (0,-3.5\unit){$C$};
  \node[texto,below](l)at (0,-4\unit){$\Fix(G)=\Mis(G-\{a,b\})\cup \big\{\{a,b\}\big\}$};

 \flecha{(2.5\unit,-1.5\unit)}{2\unit}

 \pgftransformxshift{8\unit}
 \node[texto] (l)at (0,0.5\unit){$G'$};
 \node[inode,texto](u) at (-\unit,0){$a$};
 \node[inode,texto](v) at (1.5\unit,0){$b$};
 \node[vnode](1) at (0,-3\unit){};
 \node[vnode](2) at (0,-2\unit){};
 \node[vnode] (3) at (-1.25\unit,-3\unit){};
 \node[vnode,shift={(180:1.2\unit)}] (4) at (3){};
 \node[vnode,shift={(120:1.2\unit)}] (5) at (3){};
 \path[dneg]
 (u) edge[grueso] (v)
 (u) edge[bend left=10] (1)
 (u) edge[bend left=10] (2)
 (u) edge[bend right=10] (3)
 (u) edge[bend right=10] (4)
 (u) edge[bend right=10] (5)
 (v) edge[bend right=10] (1)
 (v) edge[bend right=10] (2)
 (1) edge (2)
 (1) edge (3)
 (5) edge (2)
 (3) edge (2)
 (3) edge (4)
 (3) edge (5)
 (5) edge (4)
 ;
 \draw[cuadros] (-2.6\unit,-3.25\unit) rectangle +(1.5\unit,1.5\unit);
 \node[texto] (l) at (-2.8\unit,-2.5\unit){$A$};
  \draw[cuadros] (-0.35\unit,-3.25\unit) rectangle+(0.7\unit,1.5\unit);
  \node[texto] (l) at (0,-3.5\unit){$C$};
  \node[texto,below](l) at (1.6,-4\unit){$\begin{array}{ll}\Fix(G')=&\!\!\!\!\big\{\{a\}\big\}\cup \{S|S\in\mis(G-\{a,b\}), S\cap C\neq\emptyset\}\\&\cup\,\{S\cup\{b\}|S\in\mis(G-\{a,b\}), S\cap C=\emptyset\}\end{array}$};
\end{tikzpicture}
\caption{Fixed sets when $A=\emptyset$ or $B=\emptyset$.}\label{fig:edge1_2}
\end{figure}

\item
Suppose that $A=\emptyset$ and $B\neq\emptyset$. We prove as in case 2 that $\fix(G)\leq\fix(G')$. 

\item
Suppose that $A=\emptyset$ and $B=\emptyset$. If $C=\emptyset$ then $G$ reduces to a single positive edge and $G'$ reduced to a single negative edge and thus $\fix(G)=\fix(G')=2$ (see Figure~\ref{fig:edge1_4}). 

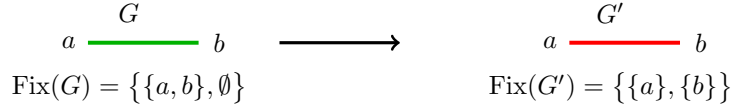
\begin{figure}[!htb]
\centering
\begin{tikzpicture}
  \node[texto] (l)at (0,0.5\unit){$G$};
 \node[inode,texto](u) at (-\unit,0){$a$};
 \node[inode,texto](v) at (1.5\unit,0){$b$};
 \path[dpos,grueso] (u) edge (v) ;
  \node[texto](l)at (0,-0.6){$\Fix(G)=\big\{\{a,b\},\emptyset\big\}$};

 \flecha{(2.5\unit,0)}{2\unit}
 
 \pgftransformxshift{8\unit}
 \node[texto] (l)at (0,0.5\unit){$G'$};
 \node[inode,texto](u) at (-\unit,0){$a$};
 \node[inode,texto](v) at (1.5\unit,0){$b$};
 \path[dneg,grueso] (u) edge (v) ;
 \node[texto] (l)at (0,-0.6){$\Fix(G')=\big\{\{a\},\{b\}\big\}$};
\end{tikzpicture}
\caption{Fixed sets when $A=\emptyset$, $B=\emptyset$ and $C=\emptyset$.}\label{fig:edge1_4}
\end{figure}

So suppose that $C\neq\emptyset$. Then $G-\{a,b\}=G[C]$ and it is clear that $\Mis(G-\{a,b\})\subseteq \Mis(G)$. As in the first case, since $G$ has at least one maximal independent set containing $a$ and one maximal independent set containing $b$, we deduce that $\mis(G-\{a,b\})\leq\mis(G)-2$. Using \eqref{edge1.1} and \eqref{edge1.2} we get $\fix(G)\leq \fix(G')$ (see Figure~\ref{fig:edge1_3}). 

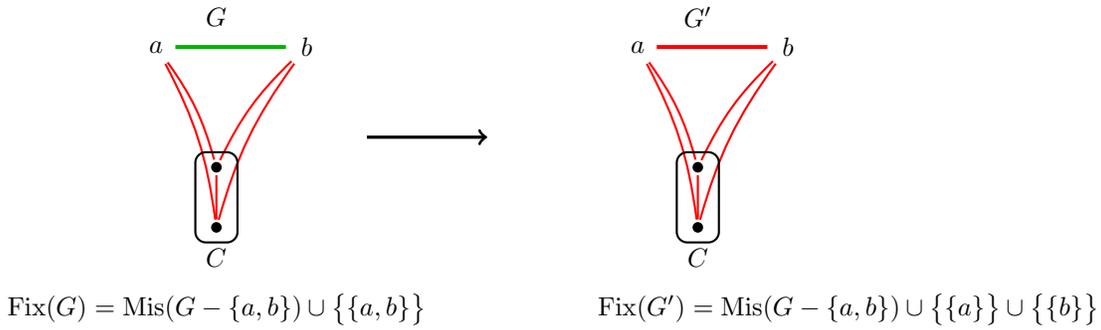
\begin{figure}[!htb]
\centering
\begin{tikzpicture}
  \node[texto] (l)at (0,0.5\unit){$G$};
 \node[inode,texto](u) at (-\unit,0){$a$};
 \node[inode,texto](v) at (1.5\unit,0){$b$};
 \node[vnode](1) at (0,-3\unit){};
 \node[vnode](2) at (0,-2\unit){};
 \path[dneg]
 (u) edge[dpos,grueso] (v)
 (u) edge[bend left=10] (1)
 (u) edge[bend left=10] (2)
 (v) edge[bend right=10] (1)
 (v) edge[bend right=10] (2)
 (1) edge (2)
 ;
 \draw[cuadros] (-0.35\unit,-3.25\unit)rectangle+(0.7\unit,1.5\unit);
 \node[texto](l)at (0,-3.5\unit){$C$};
  \node[texto,below](l)at (0,-4\unit){$\Fix(G)=\Mis(G-\{a,b\})\cup \big\{\{a,b\}\big\}$};

 \flecha{(2.5\unit,-1.5\unit)}{2\unit}
 
 \pgftransformxshift{8\unit}
 \node[texto] (l)at (0,0.5\unit){$G'$};
 \node[inode,texto](u) at (-\unit,0){$a$};
 \node[inode,texto](v) at (1.5\unit,0){$b$};
 \node[vnode](1) at (0,-3\unit){};
 \node[vnode](2) at (0,-2\unit){};
 \path[dneg]
 (u) edge[grueso] (v)
 (u) edge[bend left=10] (1)
 (u) edge[bend left=10] (2)
 (v) edge[bend right=10] (1)
 (v) edge[bend right=10] (2)
 (1) edge (2)
 ;
  \draw[cuadros] (-0.35\unit,-3.25\unit) rectangle+(0.7\unit,1.5\unit);
  \node[texto] (l) at (0,-3.5\unit){$C$};
  \node[texto,below] (l)at (2,-4\unit){$\Fix(G')=\Mis(G-\{a,b\})\cup \big\{\{a\}\big\}\cup \big\{\{b\}\big\}$};
\end{tikzpicture}
\caption{Fixed sets when $A=\emptyset$, $B=\emptyset$ and $C\neq\emptyset$.}\label{fig:edge1_3}
\end{figure}

\end{enumerate}
\end{proof}

The next decomposition property is a rather technical step which will allows us to suppress the condition $V=N_G(a)\cup N_G(b)$ from the previous proposition.  

\begin{lemma}\label{lemma:edge2}
Suppose that $G$ has a positive edge $ab$ such that $a$ and $b$ are not adjacent to other positive edges. Let $X$ be the set of vertices containing $a$, $b$ and all the vertices of $N_G(a)\cup N_G(b)$ adjacent to only negative edges. Let $G'$ be the simple signed graph obtained from $G$ by making $ab$ negative, and let $U=V-X$. Then 
\[
\fix(G)-\fix(G,U)\leq \fix(G')-\fix(G',U).
\] 
\end{lemma}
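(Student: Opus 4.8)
The plan is to reinterpret both differences as counts of ``badly decomposing'' fixed sets and then compare those counts by an injection. First I would record the structural fact that, since the positive edges of $G$ form a matching, no vertex of $X\setminus\{a,b\}$ is incident to a positive edge, and the unique positive edge meeting $\{a,b\}$ is $ab$ itself, which lies inside $X$; hence $G$ (and likewise $G'$) has no positive edge between $U$ and $V-U$. By Lemma~\ref{lemma:decompo1} this gives $\Fix(G,U)\subseteq\Fix(G)$ and $\Fix(G',U)\subseteq\Fix(G')$, so both differences are nonnegative counts. I would then prove the clean description
\[
\Fix(G)\setminus\Fix(G,U)=\{T\in\Fix(G):T\cap U\notin\Fix(G[U])\},
\]
together with the same identity for $G'$. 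The inclusion $\supseteq$ is immediate from the definition of $\Fix(G,U)$; for $\subseteq$ one checks that when $S:=T\cap U$ is a fixed set of $G[U]$, the absence of crossing positive edges forces $T\cap X\subseteq X-\N_G(S)$ and $T\cap X\in\Fix(G[X-\N_G(S)])$, whence $T\in\Fix(G,U)$. Since $a,b\in X$, the graphs $G[U]$ and $G'[U]$ coincide, so the defining condition ``$T\cap U\notin\Fix(G[U])$'' is \emph{literally the same} for $G$ and for $G'$. Thus the lemma reduces to showing that $G$ has no more such \emph{bad} fixed sets than $G'$.

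Next I would localize the problem by grouping bad fixed sets according to their trace $S=T\cap U$. The fixed sets of $G$ with a prescribed trace $S$ correspond exactly to the configurations $R=T\cap X$ that are fixed sets of the local graph $G[X-\N_G(S)]$ and that, in addition, \emph{block} every vertex of $U$ which is ready to enter given $S$ but is held out only by its neighbours in $X$. The purpose of the definition of $X$ is precisely that $G[X-\N_G(S)]$ satisfies the hypothesis $V=N(a)\cup N(b)$ of Lemma~\ref{lemma:edge1} (each vertex of $X$ other than $a,b$ is a purely negative neighbour of $a$ or $b$, and $a\in\N_G(S)$ iff $b\in\N_G(S)$), and $G'$ differs from it only in the sign of $ab$. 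So the heart of the matter is a constraint-aware version of the comparison already carried out in Lemma~\ref{lemma:edge1}.

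Crucially, this comparison cannot be done trace by trace. With $ab$ positive, the configuration containing both $a$ and $b$ can simultaneously block several $U$-neighbours (one attached to $a$, another to $b$), whereas once $ab$ is negative no fixed set contains both $a$ and $b$, so these blocks cannot all be realized at the same trace; the surplus must be recovered at \emph{other} traces. This is exactly what a global injection accomplishes. Concretely I would map a bad set $T$ of $G$ to a bad set of $G'$ by cases on the state of $\{a,b\}$: if $a,b\notin T$ then $T$ is already nearly a fixed set of $G'$ and is repaired by possibly inserting $a$ and/or $b$ (according to whether each already has a neighbour in $T$), which leaves the trace unchanged; if $a,b\in T$ then one endpoint, say $b$, is deleted and the neighbours of $b$ that thereby become unblocked are promoted, which shifts the trace and accounts for the otherwise lost blocks.

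The main obstacle is the well-definedness and injectivity of this last map. Deleting $b$ can destabilize neighbours of $b$ whose only support was $b$, and promoting them can cascade. I expect the decisive input to be again the definition of $X$, which absorbs precisely the purely negatively attached neighbours of $a$ and $b$ and thereby confines the cascade, together with the matching hypothesis, which prevents positive edges from propagating the instability uncontrollably. Verifying that the repaired (resp.\ promoted) set is genuinely a fixed set of $G'$, that it remains bad, and that distinct bad sets of $G$ across the different cases and traces receive distinct images, is where the real technical work lies; this is the sense in which the lemma is, as announced, ``a rather technical step.''
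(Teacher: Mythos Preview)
Your overall architecture is right and matches the paper: reduce to an injection from $\Fix(G)\setminus\Fix(G,U)$ into $\Fix(G')\setminus\Fix(G',U)$, after establishing the clean description
\[
\Fix(G)\setminus\Fix(G,U)=\{T\in\Fix(G):T\cap U\notin\Fix(G[U])\}
\]
(and the same for $G'$, with the same $G[U]=G'[U]$). That part is fine.

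The genuine gap is your treatment of the case $a,b\in T$. You correctly sense that the ``delete $b$, promote the newly unblocked vertices'' map is where things get ugly, and indeed I do not see how to make that cascade argument go through cleanly. The point you are missing is that \emph{this case never occurs for a bad $T$}: if $T\in\Fix(G)$ and $a,b\in T$, then $T\cap U\in\Fix(G[U])$, so $T$ is not bad. This is exactly what the definition of $X$ is engineered to give you, not merely to ``confine the cascade'' but to eliminate the case outright. Here is the check. Since every vertex of $X\setminus\{a,b\}$ is a negative neighbour of $a$ or $b$, we have $T\cap X=\{a,b\}$. Now take $v\in U$. If $v\in T$ then $f^U_v(x_{|U})\geq f_v(x)=1$. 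If $v\notin T$ and $v$ is incident to some positive edge $uv$, then $u\in U$ (no positive edge crosses into $X$) and $x_u=0$ (since $x_v=0$ and $vu$ is positive), so $f^U_v(x_{|U})=0$. If $v\notin T$ and $v$ has only negative edges, then some negative neighbour $u$ lies in $T$; if $u\in U$ we are done, and if $u\in X$ then $u\in\{a,b\}$, so $v\in N_G(\{a,b\})$ with only negative edges, forcing $v\in X$, a contradiction.

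With this observation in hand, every bad $T$ of $G$ satisfies $a,b\notin T$, hence $T\in\Fix(G-\{a,b\})=\Fix(G'-\{a,b\})$. Now use $\Fix(G',V\setminus\{a,b\})$ (Lemma~\ref{lemma:decompo1}) to extend $T$ to some $T'\in\Fix(G')$ with $T\subseteq T'\subseteq T\cup\{a,b\}$. This is your ``repair by possibly inserting $a$ and/or $b$'' map, and it is the \emph{only} case needed. Injectivity is immediate ($T=T'\setminus\{a,b\}$), and $T'\cap U=T\cap U\notin\Fix(G[U])$ shows $T'$ is still bad. No promotion, no cascade, no trace-shifting.
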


\begin{proof} 
Let $x\in\Fix(G)-\Fix(G,U)$. Let $S=\ONE(x)\cap U$ and $U'=V-U-\N_G(S)$. Since $\ONE(x)$ and $\N_G(S)-S$ are disjointed, we have  $\ONE(x)=S\cup S'$ with $S'=\ONE(x)\cap U'$. Let $f$, $f^U$ and $f^{U'}$ be the conjunctive networks on $G$, $G[U]$ and $G[U']$ respectively.  

\begin{claim}
$S\not\in\Fix(G[U])$.
\end{claim}

\begin{subproof}
Since $S\cup S'\not\in \Fix(G,U)$, it is sufficient to prove that $S'\in\Fix(G[U'])$, which is equivalent to prove that $x_{|U'}$ is a fixed point of $f^{U'}$. Let $v\in U'$. Since $f_v(x)\leq f^{U'}_v(x_{|U'})$ if $x_v=1$ then $f^{U'}_v(x)=1$. So suppose that $x_v=0$. Then $G$ has a negative edge $uv$ with $x_u=1$ or a positive edge $uv$ with $x_u=0$. Suppose first that $G$ has a negative edge $uv$ with $x_u=1$. If $u\in S$ then $v\in N_G(S)$ and this is not possible since $v\in U'$. Thus $u\in S'\subseteq U'$ and we deduce that $f^{U'}_v(x_{|U'})=0$. Suppose now that $G$ has a positive edge $uv$ with $x_u=0$. Since there is no positive edge between $U$ and $V-U$ and no positive edge between $\N_G(S)$ and $V-\N_G(S)$, there is no positive edges between $U'$ and $V-U'$. Therefore $u\in U'$ and we deduce that $f^{U'}_v(x_{|U'})=0$. Thus in every case $f^{U'}_v(x_{|U'})=x_v$ and this proves the claim. 
\end{subproof}

\begin{claim}
If $S'=\{a,b\}$ then $S\in\Fix(G[U])$.
\end{claim}

\begin{subproof}
Suppose that $S'=\{a,b\}$. Let $v\in U$. Since $f_v(x)\leq f^{U}_v(x_{|U})$, if $x_v=1$ then $f^{U}_v(x)=1$. So suppose that $x_v=0$. We consider two cases. Suppose first that $v$ is adjacent to a positive edge, say $uv$. Since $x_v=0$ we have $f_u(x)=0=x_u$, and since $G$ has no positive edge between $U$ and $V-U$, we have $u\in U$ and we deduce that $f^U_v(x_{|U})=0$. Now, suppose that $v$ is not adjacent to a positive edge. Then $G$ has a negative edge $uv$ with $x_u=1$. If $u\in S'=\{a,b\}$ then $v\in X$ since $v$ is adjacent to no positive edge, a contradiction. Therefore $u\in S\subseteq U$ and we deduce that $f^U_v(x_{|U})=0$. Thus $f^U_v(x_{|U})=x_v$ in every case, and this proves the claim. 
\end{subproof}

\begin{claim}
$S\cup S'\in\Fix(G-\{a,b\})$. 
\end{claim}

\begin{subproof}
Since every vertex in $U'-\{a,b\}$ is adjacent to $a$ or $b$ by a negative edge, if $\{a,b\}\subseteq S'$ then $S'=\{a,b\}$, which is not possible by Claims 1 and 2. We deduce that $S$ and $\{a,b\}$ are disjoint. By Lemma~\ref{lemma:decompo4} we have   
\[
S\cup S'\in \Fix(G)=\Fix(G-\{a,b\}) \cup \big(\{a,b\}\sqcup\Fix(G-\N_G(\{a,b\}))\big).
\]
and the claim follows. 
\end{subproof}

We are now in position to prove the lemma. We have already proved that 
\begin{equation}\label{pro:edge2.5}
\Fix(G)-\Fix(G,U)\subseteq \Fix(G-\{a,b\}).
\end{equation}
Since $G'-\{a,b\}=G-\{a,b\}$ we have
\begin{equation}\label{pro:edge2.6}
\Fix(G',V-\{a,b\})=\{S\cup S'\,|\,S\in\Fix(G-\{a,b\}),~S'\in\Fix(G'[\{a,b\}]-\N_{G'}(S))\}. 
\end{equation}
By Lemma~\ref{lemma:decompo1} we have $\Fix(G',V-\{a,b\})\subseteq \Fix(G')$, and we deduce from \eqref{pro:edge2.5} and \eqref{pro:edge2.6} that there exists a maps $x\mapsto x'$ from $\Fix(G)-\Fix(G,U)$ to $\Fix(G')$ such that 
\[
\ONE(x)\subseteq \ONE(x')\subseteq \ONE(x)\cup\{a,b\}.
\]
Thus $x\mapsto x'$ is an injection and by Claim 1, we have 
\[
\ONE(x')\cap U=\ONE(x)\cap U\not\in\Fix(G[U])=\Fix(G'[U]).
\]
So $x'\not\in\Fix(G',U)$ and we deduce that $x\mapsto x'$ is an injection from $\Fix(G)-\Fix(G,U)$ to $\Fix(G')-\Fix(G',U)$. Thus $|\Fix(G)-\Fix(G,U)|\leq |\Fix(G')-\Fix(G',U)|$. By Lemma~\ref{lemma:decompo1} we have $\Fix(G,U)\subseteq \Fix(G)$ and $\Fix(G',U)\subseteq \Fix(G')$ and the lemma follows.
\end{proof}

Figure~\ref{fig:edge2_2} is an example illustrating the previous lemma. 

\begin{figure}[!htb]
\centering
\def\desp{9.8\unit}
\newcommand{\grafo}[2][]{ \node[inode,texto,#1] (u) at (cu){$a$};
 \node[inode,texto,#1] (v) at (cv){$b$};
 \foreach \x in{1,...,4}
  \node[vnode,#1] (\x) at (c\x){};
 \path[dneg]
  (1) edge (u)
  (1) edge (3)
  (1) edge (4)
  (2) edge (v)
  (2) edge (4)
  (2) edge (3)
  (3) edge (4)
  (u) edge[#2,grueso] (v);
}
\newcommand{\grafoC}[2][]{%
 \foreach \x in{1,2,3,4,5,u,v}
  \node[vnode,minimum size=1.5pt,#1] (\x) at (c\x){};
 \path[dneg]
  (1) edge (u)
  (1) edge (3)
  (1) edge (4)
  (2) edge (v)
  (2) edge (3)
  (2) edge (4)
  (3) edge (4)
  (u) edge[#2,grueso] (v);
}
 \newcommand{\coordenadas}[2]{
 #1;
 \coordinate[shift={(1.5*#2,0)}] (c2) at (c1);
 \coordinate[shift={(0,-#2)}] (c3) at (c1);
 \coordinate[shift={(1.5*#2,-#2)}] (c4) at (c1);
 \coordinate[shift={(0,#2)}] (cu) at (c1);
 \coordinate[shift={(1.5*#2,#2)}] (cv) at (c1);}

 \begin{tikzpicture}
 \coordenadas{\coordinate (c1) at (0,0)}{\unit}
 \grafo{dpos}
  \node[texto,right] (G) at (0.5\unit,1.9\unit) {$G$};
  \node[texto,right] (G') at (10.3\unit,1.9\unit) {$G'$};
  \draw[cuadros] (u)++(-0.4\unit,-0.4\unit) rectangle +(2.3\unit,0.8\unit);
  \draw[cuadros] (3)++(-0.4\unit,-0.4\unit) rectangle +(2.3\unit,0.8\unit);
  \node[texto,right] (lg) at (2\unit,\unit) {$C$};
  \node[texto,right] (llg) at (2\unit,0) {};
  \node[texto,right] (lg) at (2\unit,-\unit) {$U$};
   \llaveD{($(llg.0)-(-0.3,0.5\unit)$)}{($(llg.0)+(0.3,1.4\unit)$)}
  \node[texto,shift={(3.5\sep,0)}] (lg) at (2.6\unit,0.5\unit){$X$};

  
 \node[texto,right](l) at (-0.5\unit,-2.5\unit) {$\Fix(G,U)=$};
  \llaveI{($(l.0)+(2\sep,-1.1\punit)$)}{($(l.0)+(2\sep,1.1\punit)$)}
 \coordenadas{\coordinate[xshift=3.5\sep] (c1) at (l.0)}{\punit}
 \filldraw[claro] (c4) circle(\ssombra);
 \grafoC{dpos}
 
 \node[shift={(\sep,0)},texto,right](l) at (2) {$,$};
 
 \coordenadas{\coordinate[shift={(\sep,0)}] (c1) at (l.0)}{\punit}
 \filldraw[claro] (c4) circle(\ssombra) (cu)++(90:\ssombra) arc(90:270:\ssombra)to ++(1.5\punit,0) arc(-90:90:\ssombra)--cycle;
 \grafoC{dpos}
 
 \node[shift={(\sep,0)},texto,right](l)at (2) {$,$};
 
 \coordenadas{\coordinate[shift={(\sep,0)}] (c1) at (l.0)}{\punit}
 \filldraw[claro] (c3) circle(\ssombra);
 \grafoC{dpos}
 
 \node[shift={(\sep,0)},texto,right](l)at (2) {$,$};
 
 \coordenadas{\coordinate[shift={(\sep,0)}] (c1) at (l.0)}{\punit}
 \ \filldraw[claro] (c3) circle(\ssombra) (cu)++(90:\ssombra) arc(90:270:\ssombra)to ++(1.5\punit,0) arc(-90:90:\ssombra)--cycle;
 \grafoC{dpos}
  \llaveD{($(2)+(2\sep,-1.1\punit)$)}{($(2)+(2\sep,1.1\punit)$)}
  

 \node[texto,right](l) at (-0.5\unit,-4\unit) {$\Fix(G)-\Fix(G,U)=$};
 \llaveI{($(l.0)+(2\sep,-1.1\punit)$)}{($(l.0)+(2\sep,1.1\punit)$)}
 \coordenadas{\coordinate[xshift=3.5\sep] (c1) at (l.0)}{\punit}
 \filldraw[claro] (c1) circle(\ssombra) (c2)circle(\ssombra);
 \grafoC{dpos}
   \llaveD{($(2)+(2\sep,-1.1\punit)$)}{($(2)+(2\sep,1.1\punit)$)}
 
 \flecha{(lg.0)++(1.5\unit,0)}{2\unit}
 
 \pgftransformxshift{\desp}
 \coordenadas{\coordinate (c1) at (0,0)}{\unit}
  \grafo{}

 \node[texto,right](l) at (-0.5\unit,-2.5\unit) {$\Fix(G',U)=$};
 \llaveI{($(l.0)+(2*\sep,-1.1\punit)$)}{($(l.0)+(2*\sep,1.1\punit)$)}
  \coordenadas{\coordinate[xshift=3.5\sep] (c1) at (l.0)}{\punit}
 \filldraw[claro] (cv) circle(\ssombra) (c4)circle(\ssombra);
 \grafoC{}
 
 \node[shift={(\sep,0)},texto,right](l) at (2) {$,$};
 
 \coordenadas{\coordinate[shift={(\sep,0)}] (c1) at (l.0)}{\punit}
 \filldraw[claro] (cu) circle(\ssombra) (c4)circle(\ssombra);
 \grafoC{}
 
 \node[shift={(\sep,0)},texto,right](l)at (2) {$,$};
 
 \coordenadas{\coordinate[shift={(\sep,0)}] (c1) at (l.0)}{\punit}
  \filldraw[claro] (cu) circle(\ssombra) (c3)circle(\ssombra);
  \grafoC{}
 
 \node[shift={(\sep,0)},texto,right](l)at (2) {$,$};
 \coordenadas{\coordinate[shift={(\sep,0)}] (c1) at (l.0)}{\punit}
 \filldraw[claro] (cv) circle(\ssombra) (c3)circle(\ssombra);
 \grafoC{}
  \llaveD{($(2)+(2\sep,-1.1\punit)$)}{($(2)+(2\sep,1.1\punit)$)}


  \node[texto,right](l) at (-0.5\unit,-4\unit) {$\Fix(G')-\Fix(G',U)=$};
  \llaveI{($(l.0)+(2*\sep,-1.1\punit)$)}{($(l.0)+(2*\sep,1.1\punit)$)}
 \coordenadas{\coordinate[xshift=3.5\sep] (c1) at (l.0)}{\punit}
 \filldraw[claro] (c1) circle(\ssombra) (c2)circle(\ssombra);
 \grafoC{}
 
 \node[shift={(\sep,0)},texto,right](l) at (2) {$,$};
 
 \coordenadas{\coordinate[shift={(\sep,0)}] (c1) at (l.0)}{\punit}
 \filldraw[claro] (cu) circle(\ssombra) (c2)circle(\ssombra);
 \grafoC{}
 
 \node[shift={(\sep,0)},texto,right](l)at (2) {$,$};
 \coordenadas{\coordinate[shift={(\sep,0)}] (c1) at (l.0)}{\punit}
 \filldraw[claro] (cv) circle(\ssombra) (c1)circle(\ssombra);
 \grafoC{}
  \llaveD{($(2)+(2\sep,-1.1\punit)$)}{($(2)+(2\sep,1.1\punit)$)}

\end{tikzpicture}
 \caption{Illustrative example of Lemma~\ref{lemma:edge2} with strict inequality.}\label{fig:edge2_2}
\end{figure}

We are now in position to conclude the proof with the following quantitative version of Lemma~\ref{lemma:C_4}.

\begin{lemma}[Quantitative version of Lemma~\ref{lemma:C_4}]\label{lemma:edge3}
Suppose that $G$ has a positive edge $ab$ such that $a$ and $b$ are adjacent to no other positive edge. Let $X$ be the set of vertices containing $a$, $b$ and all the vertices of $N_G(a)\cup N_G(b)$ adjacent to only negative edges. Let $\Omega$ be the set of $S\in\Fix(G-X)$ such that $G[X]-\N_G(S)$ has an induced copy of $C_4$ containing $ab$. Let $G'$ be the simple signed graph obtained from $G$ by making $ab$ negative. Then   
\[
\fix(G)\leq \fix(G')+|\Omega|\leq\frac{3}{2}\fix(G').
\]
\end{lemma}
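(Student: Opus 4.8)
The plan is to reduce the whole statement, via Lemma~\ref{lemma:edge2}, to a term-by-term comparison of the two decompositions along $U=V-X$, and then to apply Lemma~\ref{lemma:edge1} \emph{locally}. Set $U=V-X$. Since $ab$ is the only positive edge inside $X$ and the remaining vertices of $X$ carry only negative edges, neither $G$ nor $G'$ has a positive edge between $U$ and $X$, so Lemma~\ref{lemma:edge2} applies and gives $\fix(G)-\fix(G,U)\leq\fix(G')-\fix(G',U)$. Hence the first inequality $\fix(G)\leq\fix(G')+|\Omega|$ will follow once I show $\fix(G,U)-\fix(G',U)\leq|\Omega|$, and the second inequality once I show $2|\Omega|\leq\fix(G')$.

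For the first bound I would expand both decomposition counts over the common index set $\Fix(G-X)=\Fix(G'-X)$ (note $G-X=G'-X$, as $ab$ lies inside $X$). Writing $H_S=G[X]-\N_G(S)$ and $H'_S=G'[X]-\N_{G'}(S)$, one has $\fix(G,U)=\sum_S\fix(H_S)$ and $\fix(G',U)=\sum_S\fix(H'_S)$ (the unions defining $\Fix(G,U)$ and $\Fix(G',U)$ are disjoint, since $S$ is recovered as the $U$-part of $S\cup S'$), so it suffices to control each $\fix(H_S)-\fix(H'_S)$. The crux is deciding which of $a,b$ survives the removal of the positive closure. Because $\{a,b\}$ is the positive component of both $a$ and $b$ in $G$ while they are isolated for the positive edges of $G'$, one checks that $a,b$ are \emph{both} kept in $H_S$ exactly when neither has a neighbor in $S$; in that case $H'_S$ is precisely $H_S$ with $ab$ made negative, $H_S$ has $ab$ as unique positive edge, and every vertex of $H_S$ is adjacent to $a$ or $b$, so Lemma~\ref{lemma:edge1} applies to the pair $(H_S,H'_S)$ and gives $\fix(H_S)\leq\fix(H'_S)+1$, with the extra $+1$ present only when $H_S$ carries an induced $C_4$ through $ab$, i.e.\ only when $S\in\Omega$. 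In every remaining case at least one of $a,b$ is deleted from $H_S$; then $H_S$ is all-negative and is an induced subgraph of the all-negative $H'_S$ (the only possible extra vertex of $H'_S$ being whichever of $a,b$ has no neighbor in $S$, hence survives in $G'$, while being dragged out in $G$ by its positive partner), so $\fix(H_S)=\mis(H_S)\leq\mis(H'_S)=\fix(H'_S)$ by monotonicity of $\mis$ under vertex deletion. Summing, only the $S\in\Omega$ contribute a positive amount, each at most $1$, giving $\fix(G,U)-\fix(G',U)\leq|\Omega|$.

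For the factor $3/2$ I would observe that $S\in\Omega$ forces the case where $a,b$ both survive, so there $H'_S$ is the all-negative graph on the vertex set of $H_S$ and contains the four-cycle $a,u,w,b$ witnessing the induced $C_4$ through $ab$; in particular it has an edge, hence at least two maximal independent sets, so $\fix(H'_S)=\mis(H'_S)\geq2$. Combining with Lemma~\ref{lemma:decompo2} applied to $G'$, I get $\fix(G')\geq\fix(G',U)=\sum_S\fix(H'_S)\geq\sum_{S\in\Omega}\fix(H'_S)\geq2|\Omega|$, whence $|\Omega|\leq\frac12\fix(G')$ and $\fix(G')+|\Omega|\leq\frac32\fix(G')$, completing the chain.

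The step I expect to be the main obstacle is the bookkeeping of the positive closure operator: $\N_G(S)$ and $\N_{G'}(S)$ agree away from $\{a,b\}$ but differ there precisely because $ab$ is positive in $G$ and negative in $G'$. Getting the four sub-cases (which of $a,b$ has a neighbor in $S$) to line up with the correct sign of $\fix(H_S)-\fix(H'_S)$, and verifying in the surviving case that $H_S$ genuinely satisfies the neighborhood hypothesis of Lemma~\ref{lemma:edge1}, is where all the care is needed; everything else is an application of the already-established Lemmas~\ref{lemma:edge1}, \ref{lemma:edge2} and~\ref{lemma:decompo2} together with the elementary fact that deleting a vertex cannot increase the number of maximal independent sets.
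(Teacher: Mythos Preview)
Your proposal is correct and follows the same route as the paper's proof: reduce via Lemma~\ref{lemma:edge2} to comparing $\fix(G,U)$ and $\fix(G',U)$, expand both as sums over $S\in\Fix(G[U])$, apply Lemma~\ref{lemma:edge1} termwise, and bound $|\Omega|$ by half of $\fix(G',U)$. Your case analysis on whether $a,b$ survive in $H_S$ is in fact more careful than the paper's, which simply asserts $X\cap\N_G(S)=X\cap\N_{G'}(S)$ for all $S$ (not literally true when exactly one of $a,b$ has a neighbour in $S$); your handling of that residual case via the monotonicity $\mis(H_S)\leq\mis(H'_S)$ (an instance of Lemma~\ref{lemma:decompo2}) is precisely what is needed to close that small gap.
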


\begin{remark}
If $\fix(G)>\fix(G')$ then $\Omega$ is not empty and we deduce that $G[X]$ has an induced copy $C$ of $C_4$ containing $ab$. By the definition of $X$, $a$ and $b$ are the only vertices of $C$ adjacent to a positive edge. This is why this lemma implies Lemma~\ref{lemma:C_4}. 
\end{remark}

\begin{proof}[Proof of Lemma~\ref{lemma:edge3}]
Let $U=V-X$ and for every $S\in\Fix(G[U])$, let $G_S=G[X]-\N_G(S)$ and $G'_S=G'[X]-\N_{G'}(S)$. Since $G[U]=G'[U]$ we have  
\[
\begin{array}{lll}
\Fix(G,U)&=&\{S\cup S'\,|\,S\in\Fix(G[U]),~S'\in \Fix(G_S)\}\\[1mm]
\Fix(G',U)&=&\{S\cup S'\,|\,S\in\Fix(G[U]),~S'\in \Fix(G'_S)\}.
\end{array}
\]
Thus
\[
\fix(G,U)-\fix(G',U)=\sum_{S\in\Fix(G[U])}\fix(G_S)-\fix(G'_S).
\]
For every $S\in\Fix(G[U])$ we have $X\cap \N_G(S)=X\cap\N_{G'}(S)$ so $G'_S$ is obtained from $G_S$ by making $ab$ negative. Hence, according to Lemma~\ref{lemma:edge1}, we have 
\[
\begin{array}{l}
\forall S\in\Omega,\qquad \fix(G_S)-\fix(G'_S)\leq 1\\[1mm]
\forall S\notin\Omega,\qquad \fix(G_S)-\fix(G'_S)\leq 0. 
\end{array}
\]
Thus 
\[
\fix(G,U)-\fix(G',U)\leq |\Omega|,
\]
and using Lemma~\ref{lemma:edge2} we obtain 
\[
\fix(G)-\fix(G')\leq \fix(G,U)-\fix(G',U)\leq|\Omega|. 
\]
Furthermore, for every $S\in\Omega$, $G'_S$ has only negative edges and contains $ab$. Thus it has a maximal independent set containing $a$, say $S_a$, and a maximal independent set containing $b$, say $S_b$. Then $S\cup S_a$ and $S\cup S_b$ are distinct elements of $\Fix(G',U)$. We deduce that $2|\Omega|\leq\fix(G',U)\leq\fix(G')$, and the proposition follows.   
\end{proof}

\section{Proof of Theorem~\ref{thm:H'}}\label{sec:H'}

Let $G$ be a simple signed graph with vertex $V$ and edge set $E$. Let $C$ be a set of vertices such that $G[C]$ is connected and $|C|\geq 2$. We denote by $G/C$ the simple signed graph obtained from $G$ by contracting $C$ into a single vertex $c$, and by adding a negative edge $cc'$, where $c'$ is a new vertex. Formally: (1) the vertex set of $G/C$ is $V=(V-C)\cup\{c,c'\}$ where $c$ and $c'$ are new vertices; (2) the edge set is $(\{\nu(v)\nu(u)\,|\,uv\in E\}-\{cc\})\cup\{cc'\}$, where $\nu$ is the function that maps every vertex in $V-C$ to itself, and every vertex in $C$ to the new vertex~$c$; and (3) an edge $uv$ of $G/C$ is negative if $u=c$ or $v=c$ and it has the same sign as in $G$ otherwise. 

\begin{lemma}\label{lemma:contraction}
If $C$ is a non-trivial connected component of $G^+$ then 
\[
\fix(G)\leq\fix(G/C), 
\]
and the upper bound is reached if $G[C]$ has no negative edge. 
\end{lemma}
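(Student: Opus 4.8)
The plan is to compute $\fix(G/C)$ directly and compare it with the value of $\fix(G)$ supplied by Lemma~\ref{lemma:decompo4}. Write $c$ for the contracted vertex of $G/C$ and $c'$ for the new pendant vertex joined to $c$ by the negative edge $cc'$; recall that every edge of $G/C$ incident to $c$ is negative, that the positive edges of $G/C$ are exactly the positive edges of $G$ lying inside $V-C$, and that $G/C-\{c,c'\}=G-C$. The claim from which the lemma follows at once is the identity
\[
\fix(G/C)=\fix(G-C)+\fix(G-\N_G(C)),
\]
which I expect to hold regardless of the internal signs of $G[C]$.

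To prove this identity I would first note that $c'$ has the single neighbour $c$, joined by a negative edge, so the characterization \eqref{eq:fixedset} applied to $c'$ reads $c'\in T\iff c\notin T$. Hence every $T\in\Fix(G/C)$ contains exactly one of $c,c'$, which splits $\Fix(G/C)$ into two disjoint families, and I would set up a bijection for each. For the family with $c'\in T$ (so $c\notin T$), the map $T\mapsto T\cap(V-C)$ should be a bijection onto $\Fix(G-C)$: since $c\notin T$, the negative edges joining $c$ to $N_G(C)-C$ impose no constraint, so for every $v\in V-C$ the condition \eqref{eq:fixedset} in $G/C$ reduces exactly to the corresponding condition in $G-C$; conversely $T'\cup\{c'\}$ is a fixed set of $G/C$ for every $T'\in\Fix(G-C)$.

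The family with $c\in T$ is the delicate one, and I expect it to be the main obstacle. Here the condition for $c$ forces $N_G(C)-C$ to avoid $T$, and I would propagate this along positive edges: using the observation from the preliminaries that a positive component is either contained in or disjoint from any fixed set, together with $c\in T$, one obtains $T\cap(\N_G(C)-C)=\emptyset$, so $T\subseteq\{c\}\cup(V-\N_G(C))$. The task is then to show $T\mapsto T\cap(V-\N_G(C))$ is a bijection onto $\Fix(G-\N_G(C))$. The forward direction requires checking, for $v\in V-\N_G(C)$, that $v$ is adjacent in $G/C$ neither to $c$ nor to any vertex of $\N_G(C)-C$ through a positive edge (the latter because such an edge would place $v$ in the same positive component, hence in $\N_G(C)$), so that \eqref{eq:fixedset} once more reduces to the condition in $G-\N_G(C)$. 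The converse, rebuilding $T=\{c\}\cup T'$ from $T'\in\Fix(G-\N_G(C))$, is where the bookkeeping is heaviest: one must confirm that each vertex of $\N_G(C)-C$ is correctly excluded, either because it is a direct negative neighbour of $c\in T$, or because it has a positive neighbour inside $\N_G(C)-C$ that is itself excluded.

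With the identity in hand, the lemma is immediate from Lemma~\ref{lemma:decompo4}. If $G[C]$ has no negative edge, that lemma yields the disjoint union $\Fix(G)=\Fix(G-C)\cup\big(C\sqcup\Fix(G-\N_G(C))\big)$, whence $\fix(G)=\fix(G-C)+\fix(G-\N_G(C))=\fix(G/C)$, so the bound is attained. If $G[C]$ has a negative edge, the same lemma gives $\Fix(G)=\Fix(G-C)$, so $\fix(G)=\fix(G-C)\leq\fix(G-C)+\fix(G-\N_G(C))=\fix(G/C)$, which is the desired inequality.
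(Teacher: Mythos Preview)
Your proposal is correct and follows essentially the same route as the paper: both arguments partition $\Fix(G/C)$ according to whether $c$ (equivalently, by the pendant negative edge $cc'$, whether $c'$) lies in the fixed set, identify the two pieces with $\Fix(G-C)$ and $\Fix(G-\N_G(C))$ to obtain $\fix(G/C)=\fix(G-C)+\fix(G-\N_G(C))$, and then invoke Lemma~\ref{lemma:decompo4}. The paper is terser (it states the two bijections without the case-checking you outline), but the structure and key identity are identical.
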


\begin{proof}
Let 
\[
\begin{array}{l}
S_c=\{S\in\Fix(G/C)\,|\,c\in S\}\\[1mm]
S_{\bar c}=\{S\in\Fix(G/C)\,|\,c\not\in S\}.
\end{array}
\]
Since $c$ is adjacent to only negative edges, we have 
\[
S_c=\{c\}\sqcup\Fix\big((G/C)-c-\N_{G/C}(c)\big).
\]
and since there are only negative edges between $C$ and $V-C$ we have 
\[
(G/C)-c-\N_{G/C}(c)=G-\N_{G}(C).
\]
Thus
\[
|S_c|=\fix(G-\N_G(C)).
\]
Now, since $c'$ has $c$ as unique neighbor, and since $cc'$ is negative, we have 
\[
S_{\bar c}=\{S\in\Fix(G/C)\,|\,c'\in S\}=\{c'\}\sqcup\Fix(G/C-\{c,c'\}).
\]
Since $G/C-\{c,c'\}=G-C$ we deduce that 
\[
|S_{\bar c}|=\fix(G-C).
\]
Thus
\[
\fix(G-C)+\fix(G-\N_G(G))=|S_{\bar c}|+|S_c|=\fix(G/C). 
\]
According to  Lemma~\ref{lemma:decompo4}, 
\[
\fix(G)=\fix(G-C)+\fix(G-\N_G(G))=\fix(G/C)
\]
if $G[C]$ has no negative edge, and $\fix(G)=\fix(G-C)\leq \fix(G/C)$ otherwise.
\end{proof}

\begin{proof}[Proof of Theorem~\ref{thm:H'}]
Let $G$ be an unsigned graph. Let $H$ be the member of $\H'(G)$ that maximizes $\mis(H)$. Let $\s$ be a repartition of signs in $G$ that maximizes  $\fix(G_\s)$ and such that the number of positive edges in $G_\s$ is minimal for this property. We want to prove that $\fix(G_\s)=\mis(H)$.
According to Lemmas~\ref{lemma:symmetric} and \ref{lemma:matching}, $G_\s$ is a simple signed graph in which the positive edges form a matching, say $u_1v_1,\dots,u_kv_k$. Let $H^0=G_\s$ and for $1\leq\ell\leq k$, let $H^\ell=H^{\ell-1}/\{u_k,v_k\}$. By Lemma \ref{lemma:contraction} we have 
\[
\fix(G_\s)=\fix(H^0)=\fix(H^1)=\cdots =\fix(H^k)
\]
and since $H^k$ has no positive edges, $\fix(H^k)=\mis(H^k)$. Since the underlying unsigned graph of $H^k$ is a member of $\H'(G)$ this proves that 
\[
\fix(G_\s)=\mis(H^k)\leq \mis(H).
\]

Since $H\in\H'(G)$, there exists disjoint subsets of vertices $C_1,\dots,C_k$ and a sequence of graphs $H^0,\dots,H^k$ with $H^0=G$ and $H^k=H$ such that $H^{\ell-1}[C_\ell]$ is connected and $H^\ell=H^{\ell-1}/C_\ell$ for all $1\leq\ell\leq k$. For $1\leq\ell\leq k$, let $\s_\ell$ be the repartition of signs in $H^\ell$ such that $\s_\ell(uv)$ is positive if and only if $u,v\in C_p$ for some $\ell<p\leq k$. In this way we have $H^{\ell}_{\s_\ell}=H^{\ell-1}_{\s_{\ell-1}}/C_\ell$ for $1\leq \ell\leq k$, and by Lemma \ref{lemma:contraction} we have 
\[
\fix(G_{\s_0})=\fix(H^0_{\s_0})=\fix(H^1_{\s_1})=\cdots =\fix(H^k_{\s_k}).
\]
Since $H^k_{\s_k}$ has only negative edges, we deduce that 
\[
\fix(G_\s)\geq \fix(G_{\s_0})=\mis(H^k)=\mis(H).
\]
This proves that $\fix(G_\s)=\mis(H)$.
\end{proof}

\section{Proof of Theorems~\ref{thm:NP-hard} and \ref{thm:NP-hard2}}\label{sec:NP-Hard}

Let us begin with an easy complexity result, proved with a straightforward reduction to SAT similar to the one introduced in \cite{JYP88}. 

\begin{proposition}\label{pro:U}
Given a graph $G$ and a subset $U$ of its vertices, it is NP-hard to decide if $G$ has a maximal independent set disjoint from $U$. 
\end{proposition}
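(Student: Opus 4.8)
The plan is to reduce from the satisfiability problem for CNF formulas (SAT), which is NP-complete, using a construction in the spirit of \cite{JYP88}. Given a CNF formula $\Phi$ with variables $x_1,\dots,x_n$ and clauses $C_1,\dots,C_m$, I would build in polynomial time a graph $G$ together with a distinguished vertex set $U$ so that the maximal independent sets of $G$ disjoint from $U$ correspond exactly to the satisfying assignments of $\Phi$. Concretely, for each variable $x_i$ I introduce two adjacent \emph{literal vertices} $T_i$ and $F_i$, and for each clause $C_j$ a single vertex $c_j$. I join $c_j$ to $T_i$ whenever $x_i$ occurs positively in $C_j$, and to $F_i$ whenever $x_i$ occurs negatively in $C_j$, and add no other edges. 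Finally I set $U=\{c_1,\dots,c_m\}$.

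For the analysis I would use the standard fact that a maximal independent set is precisely an independent dominating set, i.e.\ a set $S$ with $v\in S \iff N_G(v)\cap S=\emptyset$ for all $v$. Consider any maximal independent set $S$ with $S\cap U=\emptyset$. Because no clause vertex lies in $S$, and because the only non-clause neighbour of $T_i$ is $F_i$ (and vice versa), domination forces $T_i\notin S\Rightarrow F_i\in S$ and $F_i\notin S\Rightarrow T_i\in S$; together with independence (the edge $T_iF_i$ forbids having both), exactly one of $T_i,F_i$ belongs to $S$. Hence $S$ encodes a truth assignment $\alpha$ by setting $\alpha(x_i)=\text{true}$ iff $T_i\in S$. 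Each $c_j\notin S$ must be dominated, and its neighbours are by construction exactly the literal vertices of the literals of $C_j$; so $c_j$ is dominated iff some literal of $C_j$ is satisfied by $\alpha$, whence $\alpha$ satisfies all of $\Phi$.

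Conversely, from a satisfying assignment $\alpha$ I would take
\[
S=\{T_i:\alpha(x_i)=\text{true}\}\cup\{F_i:\alpha(x_i)=\text{false}\}.
\]
This $S$ is independent (one endpoint per edge $T_iF_i$, no clause vertices, and distinct-variable literal vertices are non-adjacent) and dominating: every omitted literal vertex is dominated by its partner, and every clause vertex $c_j$ is dominated since $\alpha$ satisfies $C_j$. As $S$ avoids $U$, it is a maximal independent set disjoint from $U$. This gives the desired equivalence, and since SAT is NP-hard the proposition follows. The reduction itself is routine; the only step requiring care is the claim that a maximal independent set avoiding $U$ must pick exactly one literal vertex per variable, which relies precisely on the clause vertices being barred from $S$, so that they cannot serve as dominators of a literal vertex in place of its partner.
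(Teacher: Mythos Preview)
Your proposal is correct and follows essentially the same approach as the paper: both reduce from SAT via the identical gadget (an edge per variable between the two literal vertices, one vertex per clause adjacent to the literal vertices of its literals, and $U$ equal to the set of clause vertices). The paper dismisses the verification with ``It is then clear that\ldots'', whereas you spell out the domination/independence argument in full; but the construction and the underlying reasoning are the same.
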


\begin{proof}
Let $\phi$ be a CNF-formula with variables $x_1,\dots,x_n$ and clauses $C_1,\dots,C_k$. Let $G$ be the graph defined as follows. The vertices of $G$ are the positive literals $x_1,\dots,x_n$, the negative literals $\overline{x_1},\dots,\overline{x_n}$, and the clause $C_1,\dots,C_k$. The edges are defined as follows: there is an edge connecting any two contradict literal $x_i$ and $\overline{x_i}$, and each clause $C_i$ is adjacent to all literals it contains. It is then clear that $\phi$ is satisfiable if and only if $G$ has a maximal independent set disjoint from the set of clauses. 
\end{proof}

According to this proposition, the following lemma is a good step toward Theorems~\ref{thm:NP-hard} and \ref{thm:NP-hard2}. 

\begin{lemma}\label{lem:tildeGandU}
Let $G$ be a graph and let $U$ a non-empty subset of vertices. Let $\tilde G$ be the graph obtained from $G$ by adding four additional vertices $a,b,c,d$, the edges $ab$, $bc$, $cd$, $da$, and an edge $av$ for every vertex $v\in U$. Suppose that $\tilde G$ has a unique induced copy of $C_4$, the one induced by $\{a,b,c,d\}$. Then the following are equivalent:
\begin{enumerate}
\item 
$\max_{\s} \fix(\tilde G_\s)=\mis(\tilde G)$,
\item 
$\max_{H\in \H(\tilde G)}=\mis(\tilde G)$,
\item
$G$ has no maximal independent set disjoint from $U$.
\end{enumerate} 
\end{lemma}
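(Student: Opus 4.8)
The plan is to evaluate every quantity in sight against a single combinatorial parameter. Write $\alpha$ for the number of maximal independent sets of $G$ that avoid $U$; condition~3 says precisely that $\alpha=0$. I will argue that
\[
\max_{\s}\fix(\tilde G_\s)=\max_{H\in\H'(\tilde G)}\mis(H)=\max_{H\in\H(\tilde G)}\mis(H)=\mis(\tilde G)+\alpha ,
\]
after which conditions~1 and~2 each become the statement $\alpha=0$, hence are equivalent to condition~3 and to each other.

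First I would compute $\mis(\tilde G)$ by classifying a maximal independent set $S$ according to $S\cap\{a,b,c,d\}$. Because $b,c,d$ meet the rest of the graph only inside the $C_4$ while $a$ is joined to $b,d$ and to all of $U$, maximality leaves exactly three admissible patterns: $\{b,d\}\subseteq S$, $\{a,c\}\subseteq S$, and $S\cap\{a,b,c,d\}=\{c\}$. These are in bijection with, respectively, all maximal independent sets of $G$, the maximal independent sets of $G-U$ (forced since $a\in S$ kills $U$), and the maximal independent sets of $G$ that meet $U$ (forced since with $c\in S$ and $a\notin S$ the vertex $a$ can only be dominated through $U$). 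Hence $\mis(\tilde G)=2\mis(G)+\mis(G-U)-\alpha$. Repeating the count on the member $\tilde G_1\in\H(\tilde G)$ obtained by contracting the edge $cd$, which collapses the $C_4$ to a triangle, the three triangle-patterns now contribute $\mis(G)$, $\mis(G)$ and $\mis(G-U)$ with no $U$-meeting restriction, so $\mis(\tilde G_1)=2\mis(G)+\mis(G-U)=\mis(\tilde G)+\alpha$. Together with the inequality $\max_{H\in\H(\tilde G)}\mis(H)\le\max_{H\in\H'(\tilde G)}\mis(H)$ and Theorem~\ref{thm:H'}, this already gives the lower bound $\mis(\tilde G)+\alpha$ for all three maxima.

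The crux is the reverse inequality $\max_{\s}\fix(\tilde G_\s)\le\mis(\tilde G)+\alpha$; the generic estimate $\max_{\s}\fix\le\tfrac32\mis(\tilde G)$ from Theorem~\ref{thm:main} is too weak, so I must localize. Using the reductions behind Theorem~\ref{thm:upper_bound} (Lemmas~\ref{lemma:symmetric}, \ref{lemma:matching} and \ref{lemma:C_4}), I would choose an optimal repartition $\hat\s$ for which $\tilde G_{\hat\s}$ is simple, its positive edges form a matching, and every positive edge sits in an induced $C_4$ meeting no other vertex incident to a positive edge. As these transformations keep the underlying graph equal to $\tilde G$, whose only induced $C_4$ is $\{a,b,c,d\}$, the matching has at most one edge, one of $ab,bc,cd,da$. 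With no positive edge, $\fix(\tilde G_{\hat\s})=\mis(\tilde G)$ by Proposition~\ref{pro:mis}; with one positive edge $e$, Lemma~\ref{lemma:contraction} gives $\fix(\tilde G_{\hat\s})=\mis(\tilde G_{\hat\s}/e)$, and a direct $\mis$-count of this contraction (two cases up to the symmetry $b\leftrightarrow d$, according as $e$ is incident to $a$ or to $c$) equals $\mis(\tilde G)+\alpha$ in each case; the gain $\alpha$ appears because, in the contracted graph, the vertex carrying the edges to $U$ acquires a dominating neighbour outside $U$, so the maximal independent sets avoiding $U$ are no longer suppressed. Taking the better of the two options yields exactly $\mis(\tilde G)+\alpha$.

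Combining the bounds squeezes
\[
\mis(\tilde G)+\alpha=\mis(\tilde G_1)\le\max_{H\in\H(\tilde G)}\mis(H)\le\max_{H\in\H'(\tilde G)}\mis(H)=\max_{\s}\fix(\tilde G_\s)=\mis(\tilde G)+\alpha ,
\]
so all four quantities equal $\mis(\tilde G)+\alpha$. Consequently $\max_{\s}\fix(\tilde G_\s)=\mis(\tilde G)$ and $\max_{H\in\H(\tilde G)}\mis(H)=\mis(\tilde G)$ each hold if and only if $\alpha=0$, i.e.\ if and only if $G$ has no maximal independent set disjoint from $U$, which is the required equivalence of~1, 2 and~3. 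I expect the main obstacle to be the upper-bound paragraph: one must check that the sign-reductions do not alter the underlying graph (so that uniqueness of the $C_4$ can be used to cap the number of positive edges at one) and then carry out the exact contraction count for both edge-types, since only the exact value $\mis(\tilde G)+\alpha$, and not the coarse factor-$\tfrac32$ estimate, closes the argument.
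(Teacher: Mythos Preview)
Your proposal is correct and follows essentially the same route as the paper. The paper packages the computation as two claims---$\mis(\tilde G)\le 2\mis(G)+\mis(G-U)$ with equality iff condition~3 holds, and $\fix(\tilde G_e)=2\mis(G)+\mis(G-U)$ for each $e\in\{ab,bc,cd,da\}$ (computed via Lemma~\ref{lemma:decompo4} rather than via Lemma~\ref{lemma:contraction} followed by a $\mis$-count, but this is the same calculation)---while you introduce the parameter $\alpha$ and phrase everything as proving that all three maxima equal $\mis(\tilde G)+\alpha$; the underlying argument, including the reduction to a single positive edge in the unique $C_4$ via Lemmas~\ref{lemma:symmetric}--\ref{lemma:C_4}, is identical.
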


\begin{proof}
\setcounter{claim}{0}
We first need the following claim. 

\begin{claim}\label{claim:misG}
We have 
\[
\mis(\tilde G)\leq 2\mis(G)+\mis(G-U)
\] 
and the bound is reached if and only if $G$ has no maximal independent set disjoint from $U$. 
\end{claim}     

\begin{subproof}
Let 
\[
\begin{array}{l@{\,=\,}l}
M_{\bar c}&\{S\in\Mis(\tilde G)\,|\,c\not\in S\}\\ 
M_{ca}&\{S\in\Mis(\tilde G)\,|\,c\in S,~a\in S\}\\
M_{c\bar a}&\{S\in\Mis(\tilde G)\,|\,c\in S,~a\not\in S\}\\
\end{array}
\]
Clearly, these three sets form a partition of $\Mis(G)$, and it is easy to see that 
\[
\begin{array}{l@{\,=\,}l}
M_{\bar c}&\{b,d\}\sqcup\Mis(G)\\ 
M_{ca}&\{c,a\}\sqcup \Mis(G-U)\\
M_{c\bar a}&\{c\}\sqcup \{S\in\Mis(G)\,|\,S\cap U\neq\emptyset\}\\
\end{array}
\]
So $|M_{\bar c}|=\mis(G)$ and $|M_{ca}|=\mis(G-U)$ and $|M_{c\bar a}|\leq\mis(G)$. Since $|M_{c\bar a}|=\mis(G)$ if and only if $G$ has no maximal independent set disjoint from $U$, the claim is proved. 
\end{subproof}

Let $\tilde G_{ab}$ be the simple signed graph obtained from $\tilde G$ by labeling $ab$ with a positive sign and all the other edges by a negative sign. Let $\tilde G_{bc}$, $\tilde G_{cd}$ and $\tilde G_{da}$ be defined similarly. 

\begin{claim}\label{claim:fixGab}
We have  
\[
\fix(\tilde G_{ab})=
\fix(\tilde G_{bc})=
\fix(\tilde G_{cd})=
\fix(\tilde G_{da})=2\mis(G)+\mis(G-U).
\]
\end{claim} 

\begin{subproof}
Since $\tilde G_{ab}$ has $ab$ as unique positive edge, we deduce from Lemma~\ref{lemma:decompo4} that
\[
\fix(\tilde G_{ab})=\mis(G_1)+\mis(G_2)
\quad\text{with}\quad
\left\{
\begin{array}{l}
G_1=\tilde G-\{a,b\}\\[1mm]
G_2=\tilde G-N_{\tilde G}(\{a,b\}).
\end{array}
\right.
\]
Since $G_1$ is the disjoint union of the edge $cd$ and $G$, we have $\mis(G_1)=2\mis(G)$. Furthermore, since $N_{\tilde G}(\{a,b\})=\{a,b,c,d\}\cup U$, we have $G_2=G-U$. So $\mis(\tilde G_{ab})=2\mis(G)+\mis(G-U)$, and by symmetry $\mis(\tilde G_{da})=\mis(\tilde G_{ab})$. 
 
Similarly, since $\tilde G_{bc}$ has $bc$ as unique positive edge, we deduce from Lemma~\ref{lemma:decompo4} that
\[
\fix(\tilde G_{bc})=\mis(G_1)+\mis(G_2)
\quad\text{with}\quad
\left\{
\begin{array}{l}
G_1=\tilde G-\{b,c\}\\[1mm]
G_2=\tilde G-N_{\tilde G}(\{b,c\}).
\end{array}
\right.
\]
In $G_1$, $d$ has $a$ as unique neighbor. Thus, the number of maximal independent sets of $G_1$ containing $d$ is $\mis(G_1-\{a,d\})=\mis(G)$; and the number of maximal independent sets of $G_1$ not containing $d$ is equal to the number of maximal independent sets containing $a$, which is $\mis(G_1-a-U-d)=\mis(G-U)$.
Since $N_{\tilde G}(\{b,c\})=\{a,b,c,d\}$, we have $G_2=G-U$ thus $\mis(\tilde G_{bc})=2\mis(G)+\mis(G-U)$. By symmetry $\mis(\tilde G_{cd})=\mis(\tilde G_{bc})$.
\end{subproof}

Let $\s$ be a repartition of sign in $\tilde G$ that maximizes $\fix(\tilde G_\s)$, and such that the number of positive edges in $\tilde G_\s$ is minimal for this property. If $\fix(\tilde G_\s)=\mis(\tilde G)$ then $\fix(\tilde G_{ab})\leq \mis(\tilde G)$ and we deduce from Claims~\ref{claim:misG} and \ref{claim:fixGab} that $G$ has no maximal independent set disjoint from $U$. Otherwise, $\fix(\tilde G_\s)>\mis(\tilde G)$ and since $\tilde G$ has a unique induced copy of $C_4$  and we deduce from Lemmas~\ref{lemma:C_4} that $\tilde G_\s$ has a unique positive edge $e\in\{ab,bc,cd,da\}$. Thus by Claim~\ref{claim:fixGab} we have 
\[
\mis(\tilde G)<\fix(\tilde G_\s)=2\mis(G)+\mis(G-U)
\]  
and we deduce from Claim~\ref{claim:misG} that $G$ has a maximal independent set disjoint from $U$. This shows the equivalence between the point 1 and 3 in the statement. 
 
Let $k=\max_{H\in\H(\tilde G)}\mis(H)$. To conclude the proof, it is sufficient to show that 
\[
\fix(\tilde G_\s)=k.
\]
By Theorem~\ref{thm:H'}, we have $\fix(\tilde G_\s)=\max_{H\in\H'(\tilde G)}\mis(H)\geq k$, so we only need to prove that $\fix(\tilde G_\s)\leq k$. If $\fix(\tilde G_\s)=\mis(\tilde G)$ then $\fix(\tilde G_\s)\leq k$ since $\tilde G\in\H(\tilde G)$. Otherwise, $\fix(\tilde G_\s)>\mis(\tilde G)$ and as above we deduce that $\tilde G_\s$ has a unique positive edge $e\in \{ab,bc,cd,da\}$. Let $H_e$ be the graph obtained from $\tilde G$ by contracting $e$, and let $H'_e=G/e$ be obtain from $G$ by contracting $e$ into a single vertex $c$, and by adding an edge $cc'$. It is easy to check that in every case we have $\mis(H_e)=\mis(H'_e)$, and by Lemma~\ref{lemma:contraction} we have $\mis(H'_e)=\fix(\tilde G_\s)$. Since $H_e\in\H(\tilde G)$ we deduce that $\fix(\tilde G_\s)\leq k$. 
\end{proof}

According to Lemma~\ref{lem:tildeGandU}, to prove Theorems~\ref{thm:NP-hard} and \ref{thm:NP-hard2}, it is sufficient to prove the following strengthening of Proposition~\ref{pro:U}. 

\begin{lemma}\label{lem:GandU}
Let $G$ be a graph and let $U$ be a non-empty subset of its vertices. Let $\tilde G$ be obtained from $G$ and $U$ as in Lemma~\ref{lem:tildeGandU}. It is NP-hard to decide if $G$ has a maximal independent set disjoint from $U$, even if $\tilde G$ has a unique induced copy of $C_4$. 
\end{lemma}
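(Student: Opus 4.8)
The plan is to strengthen the reduction of Proposition~\ref{pro:U} so that the graph $\tilde G$ of Lemma~\ref{lem:tildeGandU} has a \emph{unique} induced $C_4$. I would first isolate a structural criterion on the pair $(G,U)$. In $\tilde G$ the vertex $a$ is adjacent exactly to $U\cup\{b,d\}$, the vertex $c$ is adjacent only to $b,d$, and $b,d$ have degree two. A short case analysis on which of $a,b,c,d$ a putative induced $C_4$ meets then shows that $\tilde G$ has exactly one induced $C_4$, namely the copy induced by $\{a,b,c,d\}$, precisely when: (i) $G$ has no induced $C_4$, and (ii) no two non-adjacent vertices of $U$ have a common neighbour in $V(G)\setminus U$. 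Indeed, a copy meeting $c$ forces $abcd$; a copy meeting $a$ but none of $b,c,d$ has the form $a\,u_1\,q\,u_2$ with $u_1,u_2\in U$ non-adjacent and $q\notin U$ a common neighbour, which (ii) forbids; a copy meeting $b$ or $d$ but not $c$ is impossible by degree; and a copy avoiding $a,b,c,d$ lies in $G$, which (i) forbids. Thus it suffices to build, from a CNF-formula $\phi$, a pair $(G,U)$ satisfying (i) and (ii) for which $G$ has a maximal independent set disjoint from $U$ iff $\phi$ is satisfiable.

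For the construction, given $\phi$ with variables $x_1,\dots,x_n$ and clauses $C_1,\dots,C_m$, I would let $G$ consist of: two literal vertices $x_i,\overline{x_i}$ joined by an edge; a \emph{forcing vertex} $z_i$ adjacent to $x_i$ and $\overline{x_i}$; a clause vertex $c_j$; and, for every occurrence of a literal $\ell$ in $C_j$, an \emph{occurrence vertex} $o_{j,\ell}$ adjacent to $c_j$ and to $\overline{\ell}$. I set $U=\{c_1,\dots,c_m\}\cup\{z_1,\dots,z_n\}$. The two decisive design choices are that the consistency enforcer $z_i$ is placed \emph{inside} $U$, and that each occurrence is attached to the \emph{negated} literal $\overline{\ell}$ rather than to $\ell$.

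The correctness argument then runs as follows. Given a satisfying assignment, select one literal per variable to form an independent set $S_0$, and for each clause add the occurrence vertex of a chosen true literal (its neighbour $\overline{\ell}$ is false, so outside $S_0$); this set is independent, already dominates every $z_i$ through $S_0$ and every $c_j$ through the chosen occurrence, so it extends, by adding only vertices outside $U$, to a maximal independent set disjoint from $U$. Conversely, if $S$ is maximal with $S\cap U=\emptyset$, then each $z_i\notin S$ can be dominated only through $x_i$ or $\overline{x_i}$, so exactly one of them lies in $S$ (the edge $x_i\overline{x_i}$ excludes both); reading $x_i$ true iff $x_i\in S$ gives a consistent assignment, and each $c_j\notin S$ is dominated by some $o_{j,\ell}\in S$, which forces $\overline{\ell}\notin S$, hence $\ell\in S$, so $C_j$ contains a true literal. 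This is the point where placing $z_i$ in $U$ pays off: the required domination of $z_i$ supplies the ``at least one literal true'' constraint without any edge that could create shared neighbours.

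Finally I would verify (i) and (ii). For (ii), note that $c_j$ is adjacent only to its own occurrence vertices and $z_i$ only to $x_i,\overline{x_i}$, so the vertices of $U$ have pairwise disjoint neighbourhoods; in particular no two of them share a neighbour, which is stronger than (ii). For (i), the literals induce a matching, and every induced $C_4$ must either lie in the literal set (impossible) or contain a clause, forcing, or occurrence vertex; since $o_{j,\ell}$ and $z_i$ have degree two, a $C_4$ through either contains both of its neighbours, but $c_j$ and $\overline{\ell}$ have no common neighbour other than $o_{j,\ell}$, and $x_i,\overline{x_i}$ have no common neighbour other than $z_i$, so no induced $C_4$ exists. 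Hence $\tilde G$ has the single induced $C_4$ $abcd$, and the (polynomial) reduction from SAT establishes NP-hardness on exactly these instances. The main obstacle throughout is the tension, already visible in Proposition~\ref{pro:U}, between forcing a globally consistent assignment and keeping clause neighbourhoods private and $G$ free of induced $C_4$; once the gadget above is seen to encode consistency through the domination of $U$-vertices (and truth through attachment to $\overline{\ell}$), the $C_4$-freeness check becomes routine because all new vertices have degree two.
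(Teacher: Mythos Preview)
Your proof is correct and takes a genuinely different route from the paper. Both reduce from SAT and aim to produce a pair $(G,U)$ with $G$ free of induced $C_4$'s and with $U$ having ``private'' neighbourhoods so that the only induced $C_4$ in $\tilde G$ is $abcd$. The paper does this with a rather heavy gadget: seven vertex classes $X,\overline X,Y,\overline Y,\mathcal L,\overline{\mathcal L},\mathcal C$, six-cycle variable gadgets, a clique on the clauses, and $U=\mathcal C\cup\mathcal L\cup\overline{\mathcal L}$; the uniqueness of the $C_4$ in $\tilde G$ is then argued ad hoc using the clause clique. Your construction is leaner: triangles $x_i,\overline{x_i},z_i$ encode consistency via domination of $z_i\in U$, and degree-two occurrence vertices $o_{j,\ell}$ attached to $c_j$ and $\overline\ell$ encode satisfaction; the key twist of linking $o_{j,\ell}$ to $\overline\ell$ rather than $\ell$ is exactly what makes the backward direction go through. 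You also isolate the clean structural criterion (no induced $C_4$ in $G$, and no two non-adjacent $U$-vertices share a neighbour outside $U$) which the paper leaves implicit. The payoff is a shorter $C_4$-freeness check, since every auxiliary vertex has degree two with neighbours that are either adjacent ($x_i\overline{x_i}$) or share no other common neighbour ($c_j$ and $\overline\ell$), while the paper's argument needs a longer case analysis. Both approaches are polynomial reductions from SAT and prove the same hardness; yours is more economical.
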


\begin{proof}
\setcounter{claim}{0}
We follow the proof of Proposition \ref{pro:U}, using however a slightly more involved arguments. Let $\phi$ be a CNF-formula with variables $x_1,\dots,x_n$ and clauses $C_1,\dots,C_k$. Let $G$ be the graph defined as follows: the vertex set is the union of the following sets 
\[
\begin{array}{l}
X=\{x_1,\dots,x_n\}\\
\overline{X}=\{\overline{x_1},\dots,\overline{x_n}\}\\
Y=\{y_1,\dots,y_n\}\\
\overline{Y}=\{\overline{y_1},\dots,\overline{y_n}\}\\
\mathcal{L}=\{L_1,\dots,L_n\}\\
\overline{\mathcal{L}}=\{\overline{L_1},\dots,\overline{L_n}\}\\
\mathcal{C}=\{C_1,\dots,C_k\}.
\end{array}
\]
The edge set is defined by: for all $1\leq i\leq n$, $x_iy_i$, $\overline{x_i}\,\overline{y_i}$, $x_iL_i$, $\overline{x_i}L_i$, $y_i\overline{L_i}$, and $\overline{y_i}\overline{L_i}$ are edges; for all $1\leq i<j\leq k$, $C_iC_j$ is an edge; for all $1\leq i\leq n$ and $1\leq j\leq k$, $L_iC_j$ is an edge; for all $1\leq i\leq n$ and $1\leq j\leq k$, $x_iC_j$ is an edge if $x_i$ is a positive literal of the clause $C_j$; and finally, for all $1\leq i\leq n$ and $1\leq j\leq k$, $\overline{x_i}C_j$ is an edge if $\overline{x_i}$ is a negative literal of the clause $C_j$ (see Figure~\ref{fig:Complejidad}). We set 
\[
U=\mathcal{C}\cup\mathcal{L}\cup\overline{\mathcal{L}}.
\]
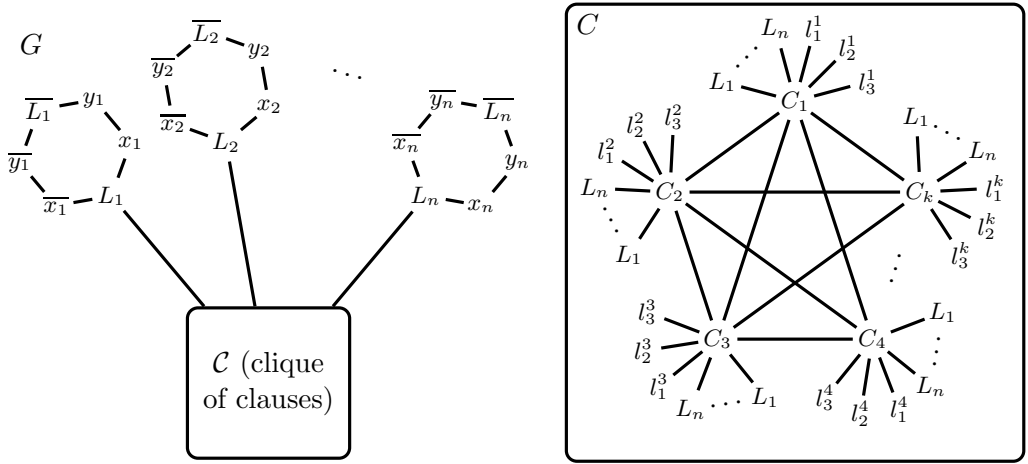
\begin{figure}[!htb]
\centering
\begin{tikzpicture}
\node[draw, rounded corners, minimum size=2cm,very thick, text width=1.9cm, text centered](C) at (0,0){$\mathcal{C}$ (clique of clauses)};
\def\radio{4}
\node (G) at (125:\radio+1.5) {$G$};
\foreach \x/\y in{130/1,100/2,50/n}{
 \node[inode,shift={(\x:\radio)}] (L\y) at (\x-180:0.75) {$L_{\y}$};
 \node[inode,shift={(\x:\radio)}] (x\y) at (\x-120:0.75) {$x_{\y}$};
 \node[inode,shift={(\x:\radio)}] (y\y) at (\x-60:0.75) {$y_{\y}$};
 \node[inode,shift={(\x:\radio)}] (nx\y) at (\x+120:0.75) {$\overline{x_{\y}}$};
 \node[inode,shift={(\x:\radio)}] (ny\y) at (\x+60:0.75) {$\overline{y_{\y}}$};
 \node[inode,shift={(\x:\radio)}] (nL\y) at (\x:0.75) {$\overline{L_{\y}}$};
 \path[very thick]
 (L\y) edge (x\y)
 (x\y) edge (y\y)
 (y\y) edge (nL\y)
 (nL\y) edge (ny\y)
 (ny\y) edge (nx\y)
 (nx\y) edge (L\y)
 (L\y) edge (C)
 ;
 }
 \path (y2) to node[sloped] {$\dots$} (nyn);
 
\pgftransformshift{\pgfpoint{7cm}{2cm}}
\def\radio{1}
\draw[very thick, rounded corners] (-135:4.3) rectangle (45:4.3) ;
\node[below right] (C) at (135:4.3) {$C$};
\foreach \x/\y in {90/1,162/2,234/3,306/4,18/k}{
 \node[inode](C\y) at (\x:1.75){$C_\y$};
 \node[inode,shift={(\x+75:\radio)}](L1) at (C\y){$L_1$};
 \node[inode,shift={(\x+15:\radio)}](Ln) at (C\y){$L_n$};
 \node[inode,shift={(\x-15:\radio)}](lit1) at (C\y){$l^{\y}_1$};
 \node[inode,shift={(\x-45:\radio)}](lit2) at (C\y){$l^{\y}_2$};
 \node[inode,shift={(\x-75:\radio)}](lit3) at (C\y){$l^{\y}_3$};
 \path (L1) to node[sloped] {$\dots$} (Ln) ;
 \path[very thick]
 (C\y) edge (L1)
 (C\y) edge (Ln)
 (C\y) edge (lit1)
 (C\y) edge (lit2)
 (C\y) edge (lit3)
 ;
 }
\path[very thick]
(C1) edge (C2)
(C1) edge (C3)
(C1) edge (C4)
(C2) edge (C3)
(C2) edge (C4)
(C4) edge (C3)
(Ck) edge (C1)
(Ck) edge (C2)
(Ck) edge (C3)
(Ck)to node[sloped] {$\dots$}(C4)
;
\end{tikzpicture}
\caption{Illustration of $G$ (when each clause $C_i$ contains three literals $l^i_1,l^i_2,l^i_3\in\mathcal{L}\cup\overline{\mathcal{L}}$).}\label{fig:Complejidad}
\end{figure}

\begin{claim}\label{claim:SAT}
$\phi$ is satisfiable if and only if there exists $S\in\Mis(G)$ such that $S\cap U=\emptyset$.  
\end{claim}     

\begin{subproof}
Suppose that $S\in\Mis(G)$ and $S\cap U=\emptyset$. Consider the assignment defined by $x_i=1$ is $x_i\in S$ and $x_i=0$ otherwise. Let $C_j$ be any clause of $\phi$. Since $S\cap U=\emptyset$, there exists $1\leq i\leq n$ such that $x_iC_j$ is an edge with $x_i\in S$, or such that $\overline{x_i}C_j$ is an edge with $\overline{x_i}\in S$. If $x_iC_j$ is an edge with $x_i\in S$ then $x_i$ is a positive literal of the clause $C_j$ and $x_i=1$ thus the clause $C_j$ is made true by the assignment. Suppose now that $\overline{x_i}C_j$ is an edge with $\overline{x_i}\in S$. Then $\overline{y_i}\not\in S$ and since $\overline{L_i}$ has a neighbor in $S$, we deduce that $y_i\in S$, and consequently, $x_i\not\in S$. Thus $x_i=0$, and since $\overline{x_i}C_j$ is an edge, $\overline{x_i}$ is a negative literal of $C_j$, and thus $C_j$ is made true by the assignment. Hence, every clauses is made true, thus $\phi$ is satisfiable. 
  
Suppose now that there exists a assignment that makes $\phi$ true, and let  
\[
S=
\{x_i,\overline{y_i}\,|\,x_i=1,~1\leq i\leq n\}\cup
\{\overline{x_i},y_i\,|\,x_i=0,~1\leq i\leq n\}
\]
Then $S$ is an independent set disjoint from $U$, and it is maximal if every vertex in $U$ has a neighbor in $S$. For $1\leq i\leq n$, since either $x_i$ or $\overline{x_i}$ is in $S$, the vertex $L_i$ has a neighbor in $S$, and since either $y_i$ or $\overline{y_i}$ is in $S$, the vertex $\overline{L_i}$ has also a neighbor in $S$. Now let $C_j$ be any clause of $\phi$. Since $C_j$ is made true by the assignment, it contains a positive literal $x_i$ with $x_i=1$ or a negative literal $\overline{x_i}$ with $x_i=0$. In the first case, $x_i\in S$ and $x_iC_j$ is an edge, and in the second case, $\overline{x_i}\in S$ and $\overline{x_i}C_j$ is an edge. Thus $S\in\Mis(G)$. 
\end{subproof}

\begin{claim}\label{claim:noC_4}
$G$ has no induced copy of $C_4$.  
\end{claim}     

\begin{subproof}
Suppose, for a contradiction, that $G$ has an induced copy of $C_4$ with vertices $W=\{v_1,v_2,v_3,v_4\}$ given in the order. If $v_1=\overline{L_i}$ for some $1\leq i\leq n$, then $\{v_2,v_4\}=\{y_i,\overline{y_i}\}$ and since $\overline{L_i}$ is the unique common neighbor of $y_i$ and $\overline{y_i}$, there is a contradiction. We deduce that $W\cap\overline{\mathcal{L}}=\emptyset$. Since $\{x_i,\overline{L_i}\}$ is the neighborhood of $y_i$ we deduce that $W\cap Y=\emptyset$, and similarly, $W\cap \overline{Y}=\emptyset$. Hence, 
\[
W\cap (Y\cup\overline{Y}\cup\overline{\mathcal{L}})=\emptyset.
\]
Suppose now that $v_1=L_i$ for some $1\leq i\leq n$. If $v_2=C_j$ for some $1\leq j\leq k$ then $v_3\in X\cup \overline{X}\cup \C\cup\mathcal{L}$ and we deduce that $v_3\in X\cup \overline{X}$ since otherwise $v_1v_3$ is an edge. So $v_4\in\C\cup\mathcal{L}$ and we deduce that $v_2v_4$ is an edge, a contradiction. Thus $v_2\in\{x_i,\overline{x_i}\}$. Suppose that $v_2=x_i$, the other case being similar. Then $v_3\in \C\cup\{y_i\}$, and thus $v_3\in\C$, so $v_1v_3$ is an edge, a contradiction. We deduce that $v_1\not\in\mathcal{L}$, and thus $U\cap\mathcal{L}=\emptyset$. Thus $W\subseteq X\cup \overline{X}\cup \C$. Since $X\cup Y$ is an independent set, and since $G[\C]$ is complete, we easily obtain a contradiction. 
\end{subproof}

Let $\tilde G$ be obtained from $G$ and $U$ as in Lemma \ref{lem:tildeGandU}.

\begin{claim}\label{claim:uniqueC_4}
$\tilde G$ has a unique induced copy of $C_4$, the one induced by $\{a,b,c,d\}$. 
\end{claim}     

\begin{subproof}
Suppose for a contradiction that $\tilde G$ has an induced copy of $C_4$ with vertices $W=\{v_1,v_2,v_3,v_4\}$ given in the order, and suppose that $W\neq\{a,b,c,d\}$. Since $G$ has no induced copy of $C_4$, we deduce that $W\cap \{a,b,c,d\}=\{a\}$. Suppose, without loss of generality, that $a=v_1$. Then $v_2,v_4\in U$, and since two distinct vertices in $\mathcal{L}\cup\overline{\mathcal{L}}$ have no common neighbor in $G$, we deduce that $v_2,v_4\in\C$. But then $v_2v_4$ is an edge of $\tilde G$, a contradiction. 
\end{subproof}
\end{proof}
 
\bibliographystyle{plain}
\bibliography{BIB}

\end{document}